\newtheorem{theorem}{Theorem}[section] 
\newtheorem{proposition}[theorem]{Proposition} 
\newtheorem{corollary}[theorem]{Corollary} 
\newtheorem{lemma}[theorem]{Lemma} 
\theoremstyle{definition} 
\newtheorem{definition}[theorem]{Definition} 
\newtheorem{example}[theorem]{Example}
\newtheorem{remark}[theorem]{Remark}
\newcommand{\CC}{{\mathbb C}} 
\newcommand{\NN}{{\mathbb N}} 
\newcommand{\FF}{{\mathbb F}}
\newcommand{\RR}{{\mathbb R}} 
\newcommand{\KK}{{\mathbb K}}
\newcommand{\cA}{{\mathcal A}} 
\newcommand{\cB}{{\mathcal B}} 
\newcommand{\cC}{{\mathcal C}} 
\newcommand{\cD}{{\mathcal D}} 
\newcommand{\cE}{{\mathcal E}} 
\newcommand{\cF}{{\mathcal F}} 
\newcommand{\cG}{{\mathcal G}} 
\newcommand{\cH}{{\mathcal H}} 
\newcommand{\cJ}{{\mathcal J}} 
\newcommand{\cK}{{\mathcal K}} 
\newcommand{\cL}{{\mathcal L}}
\newcommand{\cT}{{\mathcal T}}
 \newcommand{\bD}{\mathbf{D}}
 \newcommand{\bE}{\mathbf{E}}
 \newcommand{\bG}{\mathbf{G}}
 \newcommand{\bH}{\mathbf{H}}
 \newcommand{\bK}{\mathbf{K}}
 \newcommand{\bP}{\mathbf{P}}
 \newcommand{\bS}{\mathbf{S}}
 \newcommand{\bT}{\mathbf{T}}
 \newcommand{\bU}{\mathbf{U}}
 \newcommand{\bV}{\mathbf{V}}
 \newcommand{\bah}{\mathbf{h}}
 \newcommand{\bs}{\mathbf{s}}
 \newcommand{\bt}{\mathbf{t}}
 \newcommand{\bv}{\mathbf{v}}
 \newcommand{\fH}{\mathfrak{H}}
 \newcommand{\fP}{\mathfrak{P}}
\newcommand{\dom}{\operatorname{Dom}} 
\newcommand{\Ra}{\Rightarrow} 
\newcommand{\ran}{\operatorname{Ran}} 
\newcommand{\ra}{\rightarrow} 
\newcommand{\ol}{\overline}
\let\phi=\varphi 
\renewcommand{\ker}{\operatorname{Ker}}
\newcommand{\clos}{\operatorname{Clos}}
\newcommand{\supp}{\operatorname{supp}}
\newcommand{\diag}{\operatorname{diag}}
\newcommand{\lin}{\operatorname{Lin}}
\newcommand{\nr}[1]{\vspace{0.1ex}\noindent\hspace*{12mm}\llap{\textup{(#1)}}} 
\begin{document} 
\title[Positive Semidefinite Maps on $*$-Semigroupoids]{Partially Positive Semidefinite 
Maps on $*$-Semigroupoids and Linearisations} 
 
\keywords{$*$-semigroupoid, $*$-algebroid, positive semidefinite, completely positive, dilation, $*$-representation}

\subjclass[2010]{Primary 47L75; Secondary 43A35, 47A20, 47L60, 46L99}
 
\author[A. Gheondea]{Aurelian Gheondea}

\address{Department of Mathematics, Bilkent University, 06800 Bilkent, Ankara, 
  Turkey, \emph{and} Institute of Mathematics of the Romanian Academy,
  Calea Grivi\c tei 21, 010702 Bucure\c sti, Rom\^ania} 
\email{aurelian@fen.bilkent.edu.tr \textrm{and} A.Gheondea@imar.ro} 

\author[B. Udrea]{Bogdan Udrea}

\address{Institute of Mathematics of the Romanian Academy, 
  Calea Grivi\c tei 21, 010702 Bucure\c sti, Rom\^ania}
  \email{bogdanudrea75@yahoo.com}

\begin{abstract} Motivated by Cuntz-Krieger-Toeplitz systems associated to undirected 
graphs and representations of groupoids,
we obtain a generalisation of the Sz-Nagy's Dilation Theorem for operator valued partially 
positive semidefinite maps on $*$-semigroupoids with unit, with varying degrees of aggregation, firstly by 
$*$-representations with unbounded operators and then 
we characterise the existence of the corresponding $*$-representations by bounded operators.
By linearisation of these constructions, 
we obtain similar results for operator valued partially positive semidefinite maps on $*$-algebroids 
with unit and then, for the special case of $B^*$-algebroids with unit, we obtain a generalisation of the Stinespring's 
Dilation Theorem. As an application of the generalisation of the Stinespring's Dilation Theorem, we show that some
natural questions on $C^*$-algebroids are equivalent.
\end{abstract} 
\maketitle 
 
\section{Introduction}\label{s:i}

The modern theory of $C^*$-algebras was heavily influenced by the seminal paper of J.~Cuntz and 
W.~Krieger \cite{CuntzKrieger} that, in particular, shows how to associate $C^*$-algebras to certain 
undirected graphs; see, for example,
the monograph of I.~Raeburn \cite{Raeburn} and the bibliography cited there. 
This association has an important
combinatorial trait and it is usually performed by a so-called Cuntz-Krieger system 
associated to an undirected 
graph $G$, and its generalisation, the Cuntz-Krieger-Toeplitz system associated to $G$, 
see Definition~\ref{ex:ckt}. Roughly speaking, 
these are systems of partial isometries satisfying certain conditions on a Hilbert space 
$\cH$, that should be infinite dimensional in most of the relevant cases. Our main 
question, concerning these systems, asks whether they can be viewed as special cases 
of representations of certain mathematical objects onto some concrete forms. In this 
respect, one of the main motivation for this article is to show that indeed, these 
mathematical objects can be taken as $*$-semigroupoids and that 
the concrete forms can be 
taken as $*$-algebroids made up by linear operators, with a special concern on 
$C^*$-algebroids. In order to put these words into meaningful ideas, in the following we 
firstly recall the main sources and connections that lead us to our answer.

Another very important influence on the modern theory of $C^*$-algebras was done
by the work of J.~Renault \cite{Renault} that associated $C^*$-algebras to certain groupoids. This can be seen, for
example, in the monographs of A.L.T.~Paterson \cite{Paterson} and of D.P.~Williams \cite{Williams} and the 
rich bibliography cited there. In this case,
one usually considers locally compact groupoids and the main tool for the construction of the associated 
$C^*$-algebras is the so-called Haar system of measures. Haar systems of measures are necessary
in order to produce certain $L^2$ spaces on which the left regular representation acts. 
But the existence of Haar systems of measures on
locally compact groupoids does not hold in general and this imposes heavy restrictions in the theory. From this point
of view, questions referring to existence of representations of groupoids on Hilbert spaces are natural to ask. There 
are many relations between groupoids and graph algebras, for example see A.~Kumjian, D.~Pask, I.~Raeburn, 
and J.~Renault \cite{KumjianPaskRaeburnRenault}.

Dilation theory is a domain that shows up in both operator theory and operator algebras, see the survey article of 
W.B.~Arveson \cite{Arveson1}, for example. Two of the most important landmarks in the domain of noncommutative 
dilation theory are that of
B.~Sz.-Nagy \cite{BSzNagy}, that generalises, on the one hand, 
the dilation theorem for positive semidefinite maps on commutative groups of M.A.~Naimark \cite{Naimark2} to 
operator valued maps on $*$-semigroups with unit and, on the other hand, the B.~Sz.Nagy's unitary dilation 
theorem \cite{SzNagy1}, and that of W.F.~Stinespring \cite{Stinespring}, 
that generalises the other commutative dilation theorem for semispectral measures of 
M.A.~Naimark \cite{Naimark1} 
to operator valued completely positive maps on $C^*$-algebras. A big difference between these two theorems is that 
in the Sz-Nagy's Dilation Theorem, in order to obtain representations by bounded operators, a boundedness 
condition is needed, while in the Stinespring's Dilation Theorem there is no 
boundedness condition at all.

The fact that Sz-Nagy's Dilation Theorem implies 
Stinespring's Dilation Theorem is rather simple and, essentially, it is based, on the 
one hand, on the fact that the dilation
$*$-representation corresponding to a $*$-semigroup has an inherent linearity property and, on the other hand, on 
the existence of square roots for positive elements in $C^*$-algebras. Although considered as rather distinct results 
referring to the nonlinear dilation theory and the linear dilation theory, respectively, these two dilation theorems 
have been eventually proven to be logically equivalent by H.F.~Szafraniec \cite{Szafraniec}; this equivalence 
holds even in a larger generality, see A.~Gheondea and B.E.~U\u gurcan \cite{GheondeaUgurcan}. The dificult
implication, from Stinespring's Dilation Theorem to Sz-Nagy's Dilation Theorem, passes through a step for 
constructing a weight function (here is where the boundedness condition is used), then a second step of
linearisation on a weighted $\ell^1$ type space that can be organised as $B^*$-algebra with unit, 
and a final step of passing to the enveloping $C^*$-algebra. This last step can be shortcut, 
as shown in \cite{GheondeaUgurcan}, by using an idea of W.B.~Arveson \cite{Arveson} on replacing 
$C^*$-algebras with $B^*$-algebras. 

On the one hand, the classical Sz-Nagy's Dilation
Theorem triggered a whole domain of investigations in operator theory by the monumental work of 
B.~Sz-Nagy and C.~Foia\c s \cite{SzNagyFoias} on contractions on Hilbert spaces but, explicitly, it was not pursued
for further investigations: on Mathscinet 
the original paper of B.~Sz.-Nagy has only $12$ citations by papers. However, 
multidimensional generalisations of Sz.-Nagy and Foia\c s investigations, such as G.~Popescu~\cite{Popescu}, were 
followed by other investigations on multidimensional dilation theory. 
From the point of view that we use in this article, we mention here 
the articles of M.T.~Jury and D.W.~Kribs \cite{JuryKribs},
E.~Katsoulis and D.W.~Kribs \cite{KatsoulisKribs},
A.~Dor-On and G.~Salomon \cite{DorOnSalomon}, to cite a few.
On the other hand,
the Stinespring's Dilation Theorem made a remarkably successful career 
(Mathscinet reports $402$ citations only in mathematical papers, but there are many more in quantum physics) 
especially because the concept of 
completely positive map turned out to be extraordinarily useful in operator algebras, operator systems, and in
modelling quantum operations, for example, see V.R.~Paulsen \cite{Paulsen},  M.~Hayashi \cite{Hayashi}. 

In this article we consider some aspects of dilation theory that may establish yet one more 
bridge between graph algebras and
groupoid algebras and, in the same time, shed some light on Cuntz-Krieger-Toeplitz systems. Our aim is to obtain 
analogous results to Sz-Nagy's and Stinespring's dilation theorems and their interplay.
The fundamental concepts that we found useful in this enterprise are that of a $*$-semigroupoid with unit and that 
of operator valued partially positive semidefinite maps on $*$-semigroupoids. This is because, on the one hand, 
$*$-semigroupoids clearly generalise
groupoids while, on the other hand, we observe, see Example~\ref{ex:ckt}, 
that each Cuntz-Krieger-Toeplitz system associated to an 
undirected graph $G$ is actually a fully aggregated $*$-representation of the free $*$-semigroupoid $\FF^+_*(G)$.
In the following we explain the concept of aggregation that plays a major role in this enterprise.

A semigroupoid, as introduced by B.~Tilson \cite{Tilson}, 
is sometimes called either a semicategory, or a naked category, or a precategory,  
because it is very close to a 
small (that is, both the class of objects and morphisms are sets) category, except the assumption that there is 
an identity morphism to each object. However, a small category is a semigroupoid with unit, and this is basically the 
object that we are interested in. In this article, we use semigroupoid 
in the sense of Tilson, although there is a different definition introduced by R.~Exel \cite{Exel} which is not of 
categorial character, but catches better the original approach of J.~Cuntz and W.~Krieger \cite{CuntzKrieger}.
The idea of semigroupoid is heavily related to that of partial action, in the sense that only 
some pairs of elements can be multiplied, while a
Cuntz-Krieger-Toeplitz system consists of a system of operators acting on the same Hilbert space and hence any two
of these operators can be multiplied. It is in this sense that we use the concept of full aggregation. 
To be more precise, we allow a 
certain freedom of aggregation to representations of semigroupoids by introducing an aggregation map,
ranging from full aggregation, corresponding to a single Hilbert space, when the range of the aggregation map 
is a singleton, to no aggregation at all, corresponding to the widest bundle of Hilbert spaces, 
when the aggregation map is injective.

In the following we briefly explain the organisation of this article and point out the main results.
In Section~\ref{s:s} we review the basic concepts related to semigroupoids with an emphasise on $*$-semigroupoids,
present two operator models, one by unbounded operators and another one by bounded operators, and many 
relevant examples, and then 
define operator valued $*$-representations of $*$-semigroupoids by unbounded operators 
and bounded operators, respectively. A special case of a $*$-semigroupoid that turns out to be very useful for our 
investigations is that of an inverse semigroupoid, see V.~Liu \cite{Liu}, for example.
We also show that Cuntz-Krieger-Toeplitz systems associated to undirected 
graphs can be regarded as fully aggregated $*$-representations of free $*$-semigroupoids generated by 
those graphs. This observation provides one of the motivations for this research.

The main results of this article are obtained in Theorem~\ref{t:nagyu} and Theorem~\ref{t:nagy} that refer to
dilation of a partially positive semidefinite map on a $*$-semigroupoid with unit to a
$*$-representation of the $*$-semigroupoid, 
generalising the classical dilation theorem of B.~Sz.-Nagy. The reason that there are two dilation 
theorems is that we separated the case of representation by unbounded operators, see for the example the 
monograph of K.~Schm\"udgen \cite{Schmudgen} and the rich bibliography cited there, from the representation by
bounded operators, which requires an additional boundedness condition. This boundedness condition shows up 
since the classical paper of B.~Sz-Nagy \cite{BSzNagy} article and is quite natural, if $*$-representations with 
bounded operators is what we want. At this level of generality, the dilations that 
are obtained have a certain orthogonality property that is closely related to the characteristics of a 
Cuntz-Krieger-Toeplitz system. In general, the fashion in which different pieces of the representation 
are aggregated within a bundle of Hilbert spaces makes technical obstructions. 
This is clearly seen by the interpretation of the Cuntz-Krieger-Toeplitz systems, see Example~\ref{ex:ckt}, 
as a fully aggregated representation of the $*$-semigroupoid $\FF^+_*(G)$. 
This orthogonality property shows its importance because, when combined with a minimal property, it
implies uniqueness up to unitary equivalence. Also, it is shown that for the case of inverse 
semigroupoids with unit, in particular for groupoids, positive semidefiniteness is equivalent to existence of 
dilations with bounded operators and, in addition, the dilation is a $*$-representation made up by mutually orthogonal
partial isometries, see Corollary~\ref{c:invsem}, that resembles very much a Cuntz-Krieger-Toeplitz system.

Algebroids have been considered since J.~Pradines \cite{Pradines}, for the special case of a
Lie algebroid, and G.H.~Mosa \cite{Mosa}, for the purely algebraic counterpart. In view of the interest for
graph algebras generated through free semigroupoids, e.g.\ see A.~Kumjian, D.~Pask, and I.~Raeburn 
\cite{KumjianPaskRaeburn},  here we want to connect these 
investigations to graph algebras via the free $*$-semigroupoid of an undirected graph. Our interest is also related to
connecting semigroupoid algebras with dilation theory, and here we have to mention the pioneering work of 
D.W.~Kribs and S.C.~Power \cite{KribsPower}, and the connection of semigroups algebras with interpolation 
problems, cf.\ M.A.~Dritschel, S.~Marcantognini, and S.~McCullogh \cite{DritschelMarcantogniniMcCullogh}.
In the last section we examine linearisations of partially positive semidefinite maps and this leads to considering 
$*$-algebroids and $B^*$-algebroids, the latter being 
Banach algebroids with isometric involutions. For the general case 
of $*$-algebroids with unit we obtain an analogue of the Stinespring's dilation with unbounded operators in 
Theorem~\ref{t:stinespringu}. For the special case of partially 
positive semidefinite maps of $B^*$-algebroids with unit we 
obtain Stinesprig's dilations with bounded operators in Theorem~\ref{t:stinespring}. For this, we need
generalisations of the concept of amplification of a $*$-algebroid, the concept of amplification of a certain linear map
between $*$-algebroids, and the concept of operator valued completely positive map on $*$-algebroids, 
in the spirit of the classical result of Stinespring \cite{Stinespring}. The point here is that, 
in a $*$-algebroid, positivity is defined by means of a convex cone and that, even for a $B^*$-algebroid, this cone 
may be larger than the cone of positive elements in the isotropy algebras. 

We end by a subsection where we firstly single out the concept of a $C^*$-algebroid which, 
roughly speaking, is a $B^*$-algebroid for which the norms satisfy a certain $C^*$-algebra condition. The model is
the $B^*$-algebroid defined by a bundle of Hilbert spaces and their bounded operators but there are some 
fundamental
questions that one has to answer on this issue. Firstly, on each isotropy $C^*$-algebra there are two cones of 
positive elements, one intrinsic to the $C^*$-algebra structure and a second one induced by the concept of partially 
positive semidefiniteness, and the first question is whether the two cones coincide. The second question refers to
the amplifications of the fibers and asks to which extent are they $C^*$-algebroids. Lastly, one has to find to which 
extent an analogue of the Gelfand-Naimark Theorem does hold for $C^*$-algebroids, that is, whether any 
$C^*$-algebroid can be embedded in a $C^*$-algebroid defined by a bundle of Hilbert spaces.
As an application of Theorem~\ref{t:stinespring}, these questions are shown to be equivalent. We show in Proposition~\ref{p:rich} that a positive result to these questions might depend on how rich the fibres are, but the general problems remain open for now.

Finally, an observation on terminology. 
In this article we use the word \emph{bundle} or \emph{fibration} to denote simply a family of 
objects, called fibres, indexed on some nonempty
set, without any other structure, topological property, or measurability property, required. In general, we will use the
notation $\{O_j\}_{j\in J}$, where $J$ is the index set and $O_j$'s are the fibers of the bundle. An equivalent way
of defining a bundle, and this is often seen in the literature, is by a surjective function $s\colon O\ra J$, for some 
nonempty sets $O$ and $J$. Obviously, letting the fibre $O_j:=s^{-1}(\{j\})$ this is equivalent to the notation 
$\{O_j\}_{j\in J}$. The fibres may be sets, spaces, operators, and so on.

\section{Preliminaries on Semigroupoids}\label{s:s}
 
In this section we review the basic definitions, relevant examples, and some simple facts on semigroupoids and, 
especially, $*$-semigroupoids with unit. We also introduce two types of operator models for $*$-semigroupoids
with unit, by unbounded and bounded operators, respectively, that will play a major role in this article.

\subsection{Semigroupoids and Examples.}\label{ss:se} We firstly recall some basic definitions and 
examples on semigroupoids, with an emphasise on $*$-semigroupoids.

\begin{definition} A \emph{semigroupoid} \cite{Tilson} is a quintuple $(\Gamma;S;d;c;\cdot)$ subject to the following conditions.
\begin{itemize}
\item[(SG1)] $\Gamma$ and $S$ are nonempty sets.
\item[(SG2)] $d\colon \Gamma\ra S$ and $c\colon \Gamma\ra S$ are maps. 
\item[(SG3)] For every $\alpha,\beta\in\Gamma$ such that $d(\alpha)=c(\beta)$ there exists a unique
element $\alpha\cdot\beta\in\Gamma$, 
with $d(\beta)=d(\alpha\cdot\beta)$ and $c(\alpha\cdot\beta)=c(\alpha)$.
\item[(SG4)] For any $\alpha,\beta,\gamma\in\Gamma$ such that $d(\beta)=c(\gamma)$ 
and $d(\alpha)=c(\beta)$ we have 
\begin{equation*}\alpha\cdot (\beta\cdot\gamma)=(\alpha\cdot\beta)\cdot\gamma.\end{equation*}
\end{itemize}
\end{definition}

The map $d$ is called the \emph{domain map} or the \emph{source map} and $c$ is called the 
\emph{codomain map} or the \emph{range map}.

\begin{remark} For simplicity, we will say that $\Gamma$ is a semigroupoid and, by this, we mean that 
there exist the set $S_\Gamma$, the maps $d_\Gamma$ and $c_\Gamma$, 
and the operation $\cdot$ 
such that the quintuple $(\Gamma;S_\Gamma;d_\Gamma;c_\Gamma;\cdot)$ 
satisfies the properties (SG1) through (SG4). When there is no danger of confusion, 
we will drop the lower index $\Gamma$.

Also, for simplicity we will write $\alpha\beta$ instead of $\alpha\cdot\beta$, 
whenever the operation is possible. In this respect, we denote
\begin{equation*}
\Gamma^{(2)}:=\{(\alpha,\beta)\mid \alpha,\beta\in\Gamma,\ c(\beta)=d(\alpha)\},
\end{equation*}
the set of composable pairs in $\Gamma$. 

In addition, for each $s,t\in S$ we consider the fibres
\begin{equation*}
\Gamma^t:=\{\alpha\in \Gamma\mid c(\alpha)=t\},\quad \Gamma_s:=\{\alpha\in \Gamma
\mid d(\alpha)=s\},\quad \Gamma_s^t:=\Gamma_s\cap \Gamma^t.
\end{equation*}
In general, any of these fibres can be empty. Let $S_0:=\{ s\in S\mid \Gamma^s=\emptyset=\Gamma_s\}$, the set
of isolated points in $S$. By replacing $S$ with $S\setminus S_0$ we remove those points in $S$ that are neither the 
domain nor the codomain of any element $\gamma\in \Gamma$. Without loss of generality, we can thus assume that
for any $s\in S$ either $\Gamma_s$ or $\Gamma^s$ is nonempty.

If $\Gamma_s\neq\emptyset$ for all $s\in S$ then 
$\{\Gamma_s\mid s\in S\}$ is a partition of $\Gamma$ and hence it defines an 
equivalence relation on $\Gamma$: $\alpha\sim_d\beta$ if $d(\alpha)=d(\beta)$. 
Similarly,  if $\Gamma^s\neq \emptyset$ for all $s\in S$ then
$\{\Gamma^s\mid s\in S\}$ is a partition of $\Gamma$ and hence it defines an
equivalence relation on $\Gamma$: $\alpha\sim_c\beta$ if $c(\alpha)=c(\beta)$.
 
 Also, $\Gamma_s^t$ may be empty for some $s,t\in S$. 
 If $\Gamma_s^t$ is not empty for each $(s,t)\in S\times S$,
 equivalently, the map $(d,c)\colon \Gamma\ra S\times S$ is surjective,
 the semigroupoid is called \emph{transitive} and, in this case, $\{\Gamma_s^t\mid s,t\in S\}$ 
 is a partition of $\Gamma$ which defines the equivalence relation: $\alpha\sim_{d,c}\beta$ if 
 $d(\alpha)=d(\beta)$ and $c(\alpha)=c(\beta)$.
 If $s\in S$ and $\Gamma_s^s$ is not empty then $\Gamma_s^s$ is a semigroup, 
 called the \emph{isotropy semigroup} at $s$.
\end{remark}

\begin{example} (\emph{Transformation Semigroupoids}) Let $G$ be a semigroup which acts on a nonempty set $X$ 
to the right, with the right action denoted $x\cdot g$, for $x\in X$ and $g\in G$. In order to keep the construction away 
of trivial cases, without loss of generality
we can assume that for any $y\in X$ there exist $g\in G$ and $x\in X$ such that $y=x\cdot g$.
 Let $d\colon X\times G\ra X$  be defined by $d(x,g):=x\cdot g$ 
and $c\colon X\times G\ra X$ defined by $c(x,g):=x$. By definition, $(x,g)$  is 
composable with  
$(y,h)$ if $y=x\cdot g$ and, in this case, we define 
$(x,g)(x\cdot g,h):=(x,gh)$. Then $(X\times G;X;d;c;\cdot)$
is a semigroupoid called the \emph{transformation semigroupoid} induced by the right action of the
semigroup $G$ on the set $X$.
\end{example}

\begin{definition} A semigroupoid $\Gamma$ has a \emph{unit} if there exists an injective map 
$\epsilon \colon S_\Gamma\ra \Gamma$, for which we use the notation $\epsilon=\{\epsilon_s\}_{s\in S_\Gamma}$, 
subject to the following conditions.
\begin{itemize}
\item[(U1)] For every $s\in S_\Gamma$, $d_\Gamma(\epsilon_s)=c_\Gamma(\epsilon_s)=s$.
\item[(U2)] For any $s\in S_\Gamma$ and any $\alpha\in\Gamma^s$ we have 
$\epsilon_s\alpha=\alpha$.
\item[(U3)] For any $s\in S_\Gamma$ and any $\alpha\in \Gamma_s$ we have 
$\alpha \epsilon_s=\alpha$.
\end{itemize}
\end{definition}

\begin{remark}
It is easy to see that, a unit $\epsilon$ of the semigroupoid $\Gamma$, if it exists, is unique. In addition,
since the unit $\epsilon$ is an injective map this yields an embedding of $S_\Gamma$ in $\Gamma$.
In this case, one usually denotes $\Gamma^0:=\{\epsilon_s\mid s\in S_\Gamma\}$, 
and call it the \emph{set of units} of $\Gamma$. Clearly, $\Gamma^0$ is in a bijective correspondence
with $S_\Gamma$ and, because of that, for semigroupoids with unit, one usually identifies them.

If the semigroupoid $\Gamma$ has a unit then, for each $s\in S_\Gamma$ 
the isotropy semigroup $\Gamma_s^s$ has a unit.

A semigroupoid with unit can be defined, equivalently, as a small category, that is, a category in which
the classes of objects and arrows are sets.
\end{remark}

\begin{remark} (\emph{Semigroupoids as Directed Graphs})\label{ex:sdg} 
A directed graph is, by definition, a quadruple
$(V;E;s;r)$, where $V$ is the set of vertices, $E$ is the set of edges, and $s,r\colon E\ra V$ are the source map
and the range map, respectively. We allow that between two vertices there may be several edges.
Any semigroupoid $\Gamma$ can be viewed as a directed graph 
$(V;E;s;r)$ in which the vertex set $V=S_\Gamma$, the edge set $E=\Gamma$, the source map $s=d$ and the 
range map $r=c$, and such that the edge set is endowed with a partial 
binary operation denoted by juxtaposition $(e,f)\mapsto ef$ satisfying the following conditions.
\begin{itemize}
\item[(i)] If $e,f\in E$ then $ef$ exists if $(e,f)\in\Gamma^{(2)}$, 
equivalently, $r(f)=s(e)$.
\item[(ii)] If $(e,f)\in \Gamma^{(2)}$ then $s(ef)=s(f)$ and $r(ef)=r(e)$.
\item[(iii)] If $(e,f)\in\Gamma^{(2)}$ and $(f,g)\in \Gamma^{(2)}$ then $e(fg)=(ef)g$.
\end{itemize}
\end{remark}

\begin{definition}\label{d:frees} A semigroupoid $\Gamma$ is \emph{free} if there exists a set $A\subset \Gamma$, 
called the set of \emph{symbols},
such that
for every $\gamma\in \Gamma\setminus A$ that is not a unit, 
there exist unique $\alpha_1,\ldots,\alpha_n\in A$ such that 
$\gamma=\alpha_1\cdots \alpha_n$.
The elements $\gamma\in\Gamma$ are then called \emph{words} and, if
$\gamma=\alpha_1\cdots \alpha_n$, with $\alpha_j\in A$ for all $j=1,\ldots,n$, then $n$ is the \emph{length} of 
$\gamma$. If $\gamma\in A$ then its length is $1$. If the free semigroupoid $\Gamma$ has a unit 
$\epsilon=\{\epsilon_s\}_{s\in S}$ then for any $s\in S$, the length of $\epsilon_s$ is $0$.
\end{definition}

\begin{example} (\emph{Free Graph Semigroupoids})\label{ex:fs} 
Let $G=(V,E,s,r)$ be a directed graph.
 Without loss of generality, we
assume that any vertex is either the source or the range of some edges, so there are no isolated vertices.

Let $\FF^+(G)$ \cite{KribsPower}
denote the set of all finite paths of the directed graph $G$. More precisely, if $w_1, w_2,
\ldots,w_n$ is a sequence of edges such that $s(w_1)=r(w_2)$, $s(w_2)=r(w_3)$, through $s(w_{n-1})=r(w_n)$,
then $\omega=w_1\cdots w_n$ denotes the corresponding path. Then $r(w_1)=r(\omega)$ 
and $s(w_n)=s(\omega)$. The \emph{length} of $\omega$ is $n$.
Two finite paths 
$\alpha$ and $\beta$ can be concatenated if and only if $s(\alpha)=r(\beta)$ and then the concatenation
$\alpha\beta$ is defined in an obvious fashion, so the length of $\alpha\beta$ is $n+m$. Here we have to observe
that concatenation is following the same rule as juxtaposition as defined in Example~\ref{ex:sdg}.

It is easy to see that, letting $S=V$, the domain map $d$ be the source map $s$, 
the codomain map $c$ be the range map $r$, and  the operation of 
concatenation, we have a semigroupoid structure on $\FF^+(G)$, called the \emph{free semigroupoid} 
associated to the graph $G$. The set of edges $E$, identified with the set of paths of length $1$, is the set of 
symbols of $\FF^+(G)$. If we allow
any vertex $x$ to have a special loop $e_x$ such that $e_x f=f$ for any edge $f$ with $r(f)=x$ and
$ge_x=g$ for all edge $g$ with $s(g)=x$, then $e=\{e_x\mid x\in V_G\}$ is the unit of $\FF^+(G)$. In this case,
we actually identify the set $V$ of vertices with the unit $e$.
\end{example}

\begin{definition}
Given a semigroupoid $\Gamma$, an \emph{involution} on $\Gamma$ is a map 
$\Gamma\ni \alpha\mapsto \alpha^*\in\Gamma$ subject to the following conditions.
\begin{itemize}
\item[(I1)] For any $\alpha\in\Gamma$ we have $d(\alpha^*)=c(\alpha)$ and $c(\alpha^*)=d(\alpha)$.
\item[(I2)] For any $(\alpha,\beta)\in \Gamma^{(2)}$ we have $(\alpha\beta)^*=\beta^*\alpha^*$.
\item[(I3)] For any $\alpha\in\Gamma$ we have $(\alpha^*)^*=\alpha$.
\end{itemize}
A semigroupoid with a specified involution $*$ will be called a \emph{$*$-semigroupoid}.
\end{definition}

\begin{remark} By property (I3), involutions are bijective maps.  Also, if the
$*$-semigroupoid $\Gamma$ has a unit $\epsilon$ then 
\begin{equation*}
\epsilon_s^*=\epsilon_s,\quad s\in S_\Gamma.
\end{equation*}
Also, for any $s\in S_\Gamma$ 
such that $\Gamma_s^s\neq\emptyset$, the isotropy semigroup $\Gamma_s^s$ has an involution.
\end{remark}

\begin{example} (\emph{Free Graph $*$-Semigroupoids}) \label{ex:fsug}
With notation and assumptions as in Example~\ref{ex:fs}, assume that the graph $G=(V,E,s,r)$ is directed and we 
extend it to a graph $\tilde G=(V;\tilde E;s;r)$ as follows: to any 
edge $f\in E$ which is not a unit 
we add a unique companion edge $f^*$ with $s(f^*)=r(f)$ and $r(f^*)=s(f)$. For
the units $e_v$ we let $e_v^*=e_v$. Also, we define $(f^*)^*=f$ for any companion edge $f^*$. Let $E^*$ denote 
the collection of all edges added in this way from the edges in $E$ and let $\tilde E=E\cup E^*$. We let 
$\FF^+_*(G)$ be the free semigroupoid with unit induced by $\tilde G$.
Then, for any finite path $\omega=w_1\cdots w_n\in\FF^+_*(G)$ we can uniquely define the finite path 
$\omega^*=w_n^*\cdots w_1^*$ and, in this way, the free  $*$-semigroupoid with unit $\FF^+_*(G)$ is 
obtained.
\end{example}

\begin{definition} A semigroupoid $\Gamma$ is called an \emph{inverse semigroupoid}, e.g.\ see \cite{Liu}, if for any 
$\alpha\in\Gamma$ there exists a unique $\alpha^\prime\in\Gamma$ such that 
$\alpha\alpha^\prime\alpha=\alpha$ 
and $\alpha^\prime\alpha\alpha^\prime=\alpha^\prime$. Note that, in particular, this means that
$d_\Gamma(\alpha^\prime)=c_\Gamma(\alpha)$ and 
$c_\Gamma(\alpha^\prime)=d_\Gamma(\alpha)$, for all $\alpha\in \Gamma$.
\end{definition}

\begin{remark} \label{r:isi} 
It is easy to see that if $\Gamma$ is an inverse semigroupoid then for any $\alpha\in\Gamma$ we have
$(\alpha^\prime)^\prime=\alpha$ and for any $(\alpha,\beta)\in\Gamma^{(2)}$ we have 
$(\alpha\beta)^\prime=\beta^\prime\alpha^\prime$. In particular, any inverse semigroupoid is a 
$*$-semigroupoid with $\alpha^*:=\alpha^\prime$.
\end{remark}

\begin{definition}\label{d:action} 
Let $\Gamma$ be a semigroupoid and $X$ a nonempty set. A \emph{left action} of
$\Gamma$ on $X$ is a pair $(a;\cdot)$ subject to the following conditions.
\begin{itemize}
\item[(A1)] $a\colon X\ra S_\Gamma$ is a map.
\item[(A2)] For any $x\in X$ and any $\alpha\in\Gamma_{a(x)}$
there exists a unique element $\alpha\cdot x\in X$ such that $a(\alpha\cdot x)=c(\alpha)$.
\item[(A3)] For any $x\in X$ and any $(\alpha,\beta)\in\Gamma^{(2)}$ such that 
$\beta\in\Gamma_{a(x)}$ we have
\begin{equation*}
(\alpha\beta)\cdot x=\alpha\cdot(\beta\cdot x).
\end{equation*}
\end{itemize}
If, in addition, $\Gamma$ has a unit $\epsilon$ then the left action $(a;\cdot)$ is called \emph{unital} if
\begin{equation*}
\epsilon_{a(x)}\cdot x=x,\quad x\in X.
\end{equation*}
\end{definition}

\begin{remark} (1) If we replace $X$ with $a(S)$, in this way we remove those points in $X$ on which no action
of $\Gamma$ occurs and hence, without loss of generality, we can assume that the map $a$ is surjective. The map 
$a$ is called \emph{anchor}. 

(2) Any semigroupoid $\Gamma$ has a natural left action on itself. To see this, 
let $a\colon \Gamma\ra S_\Gamma$ be defined by $a(\beta):=c(\beta)$. In addition,
for any $\alpha\in\Gamma$ and any $\beta\in \Gamma_{a(\alpha)}$ the action is defined by 
$\beta\cdot \alpha:=\beta \alpha$.

(3) The definition of a left action of a semigroupoid $G$ over a set $X$ is natural. For the case of 
topological groupoids, more conditions are assumed, see e.g.\ \cite{MuhlyRenaultWilliams}.
\end{remark}

\begin{example}\label{ex:segah} (\emph{Semigroupoids Modelled by Bounded Operators on Hilbert Spaces})
Let $S$ be a nonempty set and $\bH=\{\cH_s\}_{s\in S}$ a \emph{bundle of Hilbert spaces}
over $\KK$ (either $\RR$ or $\CC$) for some nonempty set $S$. An element 
$\bah=\{h_s\}_{s\in S}$ such that $h_s\in\cH_s$ for all $s\in S$ will be called a
\emph{cross-section} of the bundle $\bH$.
Let 
\begin{equation}\label{e:gamabeh}
\Gamma_\bH=\bigsqcup_{s,t\in S} \cB(\cH_s,\cH_t),
\end{equation}
where $\cB(\cH_s,\cH_t)$ denotes the vector space of all bounded linear operators 
$T\colon \cH_s\ra\cH_t$.

For every $T\in \Gamma_\bH$, there exists uniquely $s,t\in S$ such that 
$T\in\cB(\cH_s,\cH_t)$ and then
we define $d(T)=s$ and $c(T)=t$. In this way, $d\colon\Gamma_\bH\ra S$ and 
$c\colon \Gamma_\bH\ra S$ are surjective maps.

The operation of composition in $\Gamma_\bH$ 
is defined by operator composition: if $A,B\in\Gamma_\bH$
are such that $d(A)=c(B)=u\in S$ then $A\in\cB(\cH_u,\cH_t)$ for some $t\in S$ and 
$B\in\cB(\cH_s,\cH_u)$ for some $s\in S$. Then $AB\in \cB(\cH_s,\cH_t)$
is the composition of the 
operators $A$ and $B$, in this order. Thus, $(\Gamma_\bH,S,d_\bH,c_\cH,\cdot)$ is a semigroupoid.

The semigroupoid $\Gamma_\bH$ has the unit $\epsilon\colon S\ra\Gamma_\bH$ defined by 
$\epsilon_s:=I_{\cH_s}$, where $I_{\cH_s}$ denotes the identity operator on $\cH_s$, for all $s\in S$.
It also has a natural involution: $\Gamma_\bH\ni T\mapsto T^*\in\Gamma_\bH$ where, 
if $T\in\cB(\cH_s,\cH_t)$ by $T^*\in\cB(\cH_t,\cH_s)$ we denote the Hilbert space adjoint operator.

In addition, the semigroupoid $\Gamma_\bH$ has a natural left action on $S$ defined by 
$T\cdot s:=t$, where $T\in\cB(\cH_s,\cH_t)$.
\end{example}

\begin{example} (\emph{Semigroupoids Modelled on Vector Spaces}) 
\label{ex:semes} Let $S$ be a nonnempty 
set and $\bD=\{\cD_s\}_{s\in S}$ be a bundle of vector spaces over $\KK$. We let
\begin{equation*}
\Gamma_\bD=\bigsqcup_{s,t\in S}\cL(\cD_s,\cD_t),
\end{equation*}
where by $\cL(\cD_s,\cD_t)$ we understand the vector space of all linear operators 
$T\colon \cD_s\ra\cD_t$. As in the previous example, for any $T\in \Gamma_\bD$ there exists
uniquely $s,t\in S$ such that $T\in\cL(\cD_s,\cD_t)$ and we let $d(T):=s$ and $c(T):=t$, hence
$d\colon \Gamma_\bD\ra S$ and $c\colon \Gamma_\bD\ra S$ are surjections. The partial
composition is defined as in the previous example and, in this way, $\Gamma_\bD$ becomes
a semigroupoid with unit $\epsilon\colon S\ra\Gamma_\bD$, $\epsilon(s):=I_{\cD_s}$. As in the 
previous example, there is a natural left action of $\Gamma_\bD$ onto $S$.
\end{example}

\begin{example}\label{ex:semuh}
 (\emph{Semigroupoids Modelled by Unbounded Operators in Hilbert Spaces.})
\label{ex:semeseh} 
Let us assume, with notation as in the previous example,
that for each $s\in S$ the vector space 
$\cD_s$ is a dense subspace of a Hilbert space $\cH_s$ and let
\begin{equation}\label{e:gabed}
\Gamma_{\bH;\bD}:=\bigsqcup_{s,t\in S} \cL^*(\cD_s,\cD_t),
\end{equation}
where, for each $s,t\in S$, we let $\cL^*(\cD_s,\cD_t)$ denote the vector space of all
linear operators $T\colon \cD_s(\subseteq \cH_s)\ra\cH_t$ subject to the following assumptions.
\begin{itemize}
\item[(i)] $T\cD_s\subseteq \cD_t$.
\item[(ii)] $ \cD_t\subseteq \dom(T^*)$ and $T^*\cD_t\subseteq\cD_s$.
\end{itemize}
Here, by $T^*\colon\dom(T^*)(\subseteq \cH_t)\ra\cH_s$ we understand 
the adjoint operator (possibly unbounded) defined in the usual sense,
\begin{equation*}
\dom(T^*):=\{k\in \cH_t\mid \cD_s\ni h\mapsto \langle Th,k\rangle_{\cH_t}\mbox{ is bounded}\},
\end{equation*}
and
\begin{equation*}
\langle Th,k\rangle_{\cH_t}=\langle h,T^*k\rangle_{\cH_s},\quad h\in\cD_s,\quad k\in\dom(T^*).
\end{equation*}
Note that, in this way, any operator $T$ in $\cL^*(\cD_s,\cD_t)$ is closable. The involution on $\Gamma_{\bH,\bD}$ is
defined by $\cL^*(\cD_s,\cD_t)\ni T\mapsto T^*|_{\cD_t}$,  for all $s,t\in S$.

Then, it is easy to see that $\Gamma_{\bH;\bD}\ni T\mapsto T^*\in\Gamma_{\bH;\bD}$ is an involution
which makes $\Gamma_{\bH;\bD}$ a $*$-semigroupoid with unit. As in the previous example, 
there is a natural left action of $\Gamma_{\bH;\bD}$ onto $S$.
\end{example}

\begin{definition}
A semigroupoid $\Gamma$ is called a \emph{groupoid} \cite{Brandt} if it has a unit $\epsilon$ and there exists a 
map $\Gamma\ni \alpha\mapsto \alpha^{-1}\in \Gamma$ such that, for every $\alpha\in \Gamma$,
we have $d(\alpha^{-1})=c(\alpha)$, $c(\alpha^{-1})=d(\alpha)$, and
\begin{equation}
\alpha\alpha^{-1}=\epsilon_{c(\alpha)},\quad \alpha^{-1}\alpha=\epsilon_{d(\alpha)}.
\end{equation}
\end{definition}

If $\Gamma$ is a groupoid then its unit set 
$\Gamma^0=\{\alpha^{-1}\alpha\mid \alpha\in\Gamma\}$ is naturally identified with the set 
$S_\Gamma$, and there is no need to specify $S_\Gamma$ beforehand, which is done by 
some authors. 

Any groupoid is an inverse semigroupoid with unit, when defining $\alpha^\prime=\alpha^{-1}$, in 
particular a  $*$-semigroupoid with unit, when defining the involution $\alpha^*:=\alpha^{-1}$.

\begin{example} (\emph{Transitive Relations as Semigroupoids})\label{ex:releg} 
Let $S$ be a nonempty set and consider $R\subseteq S\times S$ a relation on $S$.
Let $\dom(R) :=\{s\in S\mid (t,s)\in R\mbox{ for some }t\in S\}$,
$\ran(R) :=\{t\in S\mid (t,s)\in R\mbox{ for some }s\in S\}$.
Define the domain map $d\colon S\times S\ra S$ by $d(s,t):=t$ and, similarly, the codomain map
$c\colon S\times S\ra S$ by $c(s,t):=s$, for any $(s,t)\in S\times S$.
A pair $((s,t),(u,v))\in (S\times S)\times (S\times S)$ is composable, by definition, if $t=u$ and, in this case,
let $(s,t)\cdot (u,v):=(s,v)$.

With respect to these maps and composition, the relation $R\subseteq S\times S$
is a semigroupoid if and only if $R$ is transitive.
In addition, assuming that the relation $R$ is a semigroupoid as above, in particular $R$ is transitive, 
then:
\begin{itemize}
\item $R$ has a unit if and only if it is reflexive.
\item $R$ is a groupoid if and only if it is reflexive and symmetric, equivalently, it is an equivalence 
relation.
\end{itemize}
\end{example}

\begin{example}\label{ex:fg} (\emph{Free Graph Groupoids}) Let $G=(V,E)$ be an undirected graph. This means
that any edge $f$ can be considered in a double manner, and we denote the companion of any 
edge $f$ by $f^{-1}$, with $s(f^{-1})=r(f)$ and $r(f^{-1})=s(f)$. For any vertex $v$ we have the loop $e_v$ such 
that $e_v=e_v^{-1}$. Let $\FF(G)$ denote the set of all finite paths over the undirected graph $G$, 
as in Example~\ref{ex:fs}, but in such a way that, when considering
a arbitrary edge $f$ as a finite path of length $1$, for the operation of concatenation we have the cancellation rules 
$ff^{-1}=e_{r(f)}$ and $f^{-1}f=e_{s(f)}$, by definition. 
Then, with any finite paths $\omega,\gamma\in\FF(G)$, whenever
the concatenation $\omega\gamma$ is possible, we apply the cancellation rules as before. In this way, $\FF(G)$
is a groupoid, called the \emph{free groupoid} generated by the undirected graph $G$. 
\end{example}

\subsection{Morphisms of Semigroupoids and Representations.}\label{ss:msr}
Since the semigroupoids considered in this article are modelled by small categories, the concept of semigroupoid
morphism is modelled by that of a functor, hence it has two components, the map between objects and the 
map between arrows.

\begin{definition}\label{d:sm} Let $\Gamma$ and $\Lambda$ be two semigroupoids. A pair $(\phi;\Phi)$ is 
a \emph{semigroupoid morphism} from $\Gamma$ to $\Lambda$ if the following conditions hold.
\begin{itemize}
\item[(SM1)] $\phi\colon S_\Gamma\ra S_\Lambda$ and $\Phi\colon \Gamma\ra \Lambda$ are maps.
\item[(SM2)] For any $\alpha\in\Gamma$ we have $d_\Lambda(\Phi(\alpha))=\phi(d_\Gamma(\alpha))$ 
and $c_\Lambda(\Phi(\alpha))=\phi(c_\Gamma(\alpha))$.
\item[(SM3)] For any $(\alpha,\beta)\in \Gamma^{(2)}$ we have 
$(\Phi(\alpha),\Phi(\beta))\in\Lambda^{(2)}$ and
\begin{equation*}
\Phi(\alpha\beta)=\Phi(\alpha)\Phi(\beta).
\end{equation*}
\end{itemize}
The map $\phi$ is called the \emph{aggregation map}.

On the other hand, if both $\Gamma$ and $\Lambda$ are $*$-semigroupoids, the semigroupoid
morphism $(\phi;\Phi)$ is called a \emph{$*$-morphism} if
\begin{equation*}
\Phi(\alpha^*)=\Phi(\alpha)^*,\quad \alpha\in\Gamma.
\end{equation*}

The pair $(\phi;\Phi)$ is called a \emph{monomorphism}, \emph{epimorphism}, \emph{isomorphism}, 
of semigroupoids if both $\phi$ and $\Phi$ are injective, surjective, and bijective, respectively.
\end{definition}

\begin{remark}\label{r:munital} 
With notation as in the previous definition, assuming that both semigroupoids $\Gamma$ and 
$\Lambda$ have units $\epsilon_\Gamma\colon S_\Gamma\ra \Gamma$ and 
$\epsilon_\Lambda\colon S_\Lambda\ra \Lambda$, what would be a correct definition for a unital morphism 
$(\phi;\Phi)$ from $\Gamma$ to $\Lambda$? Let us first observe that, for all $s\in S_\Gamma$ we have 
$\Phi(\epsilon_\Gamma(s))=\Phi(\epsilon_\Gamma(s))^2$, that is, 
$\Phi(\epsilon_\Gamma(s))$ is an idempotent in the semigroupoid $\Lambda$. In the special case when both 
$\Gamma$ and $\Lambda$ are $*$-semigroupoids, then for all 
for all $s\in S_\Gamma$ we have 
$\Phi(\epsilon_\Gamma(s))=\Phi(\epsilon_\Gamma(s))^*=\Phi(\epsilon_\Gamma(s))^2$, that is, 
$\Phi(\epsilon_\Gamma(s))$ is a selfadjoint idempotent in the $*$-semigroupoid $\Lambda$.
But, because the
aggregation map $\phi$ might not be injective, the tentative definition of unitality
\begin{equation*}
\Phi(\epsilon_\Gamma(s))=\epsilon_\Lambda(\phi(s)),\quad s\in S_\Gamma,
\end{equation*}
might not be the correct one, for example, if $\phi$ is not injective. A possible answer to this question can be 
formulated in case the semigroupoid is modelled by operators, see Remark~\ref{r:cohu}.
\end{remark}

A semigroupoid $\Gamma$ with the property that, for any $s,t\in S_\Gamma$, the set $\Gamma_s^t$ 
has at most one element is called \emph{principal}. Equivalently, this means that  the map 
$(d_\Gamma,c_\Gamma)\colon \Gamma\ra S_\Gamma\times S_\Gamma$ is injective.

\begin{remark} A semigroupoid $\Gamma$ is isomorphic to a 
semigroupoid $\Gamma_R$ defined by a relation $R$, see Example~\ref{ex:releg}, if and only 
$\Gamma$ is a principal semigroupoid.

Indeed, if $R$ is a relation on a set $S\neq\emptyset$ and $(\phi;\Phi)$ is an isomorphism of
$\Gamma$ onto $\Gamma_R$, then, for any $s,t\in S_\Gamma$, $\Phi$ maps bijectively
the set $\Gamma_s^t$ to the set ${\Gamma_R}_{\phi(s)}^{\phi(t)}$, which has at most one element.

Conversely, assume that for any $s,t\in S_\Gamma$ the set $\Gamma_s^t$ has at most one element.
Let $S:=S_\Gamma$ and define the relation $R$ on $S$ in the following way: $(s,t)\in R$ if
$\Gamma_t^s\neq\emptyset$. Then $R$ is transitive, hence $R$
defines a semigroupoid. Let 
$\phi\colon S_\Gamma\ra S$ be the identity map and $\Phi\colon \Gamma\ra\Gamma_R$ be 
defined in the following way: for any $\gamma\in\Gamma$ we define 
$\Phi(\gamma):=(c_\Gamma(\gamma),d_\Gamma(\gamma))$. 
Then $(\phi;\Phi)$ is an isomorphism of $\Gamma$ onto $\Gamma_R$.
\end{remark}

\begin{definition} \label{d:serep}
Let $\Gamma$ be a semigroupoid. A \emph{representation} of $\Gamma$ on a
bundle of Hilbert spaces $\bH=\{\cH_s\}_{s\in S}$ is a semigroupoid morphism 
$(\phi;\Phi)$ of $\Gamma$ to the semigroupoid $\Gamma_\bH$, see Example~\ref{ex:segah}.
More precisely:
\begin{itemize}
\item[(R1)] $\phi\colon S_\Gamma\ra S$ and $\Phi\colon \Gamma\ra\Gamma_\bH$ 
are maps.
\item[(R2)] For any $\alpha\in\Gamma$ we have 
$\Phi(\alpha)\in\cB(\cH_{\phi(d(\alpha))},\cH_{\phi(c(\alpha))})$. 
\item[(R3)] For any 
$(\alpha,\beta)\in\Gamma^{(2)}$ we have $\Phi(\alpha\beta)=\Phi(\alpha) \Phi(\beta)$.
\end{itemize}

The representation $(\phi;\Phi)$ is called \emph{orthogonal}
if, in addition, the following property holds:
\begin{itemize}
\item[(R4)] For any $\alpha,\beta\in \Gamma$ such that
$\phi(c(\alpha))=\phi(c(\beta))$ but $c(\alpha)\neq c(\beta)$,
we have that $\Phi(\alpha)\cH_{\phi(d(\alpha))}\perp 
\Phi(\beta)\cH_{\phi(d(\beta))}$.
\end{itemize}

If $\Gamma$ is a $*$-semigroupoid then the morphism $(\phi;\Phi)$ is called a 
\emph{$*$-representation} of $\Gamma$ on a bundle of Hilbert spaces
$\bH$ if it is a $*$-morphism of $*$-semigroupoids from $\Gamma$ to $\Gamma_\bH$.
\end{definition}

\begin{example}\label{ex:lereg} (\emph{Aggregated Left Regular Representations}) 
Let $\Gamma$ be a semigroupoid and let $\tau\colon S_\Gamma\ra X$ be 
an aggregation map. For each $x\in X$ we consider the Hilbert space 
$\ell_\tau^2(\Gamma;x)$ with orthonormal basis $\{\delta_\gamma\mid \gamma\in \Gamma,\ 
\tau(c(\gamma))=x\}$, if the set  $\{\gamma\in\Gamma\mid
\tau(c(\gamma))=x\}$ is not empty, and let $\ell_\tau^2(\Gamma;x)$ 
be the null space, in the opposite case. Thus, in case the set $\{\gamma\in\Gamma\mid
\tau(c(\gamma))=x\}$ is not empty, the vectors in $\ell_\tau^2(\Gamma;x)$ are all complex functions 
$f\colon \{\gamma\in\Gamma\mid \tau(c(\gamma))=x\}\ra \CC$ such that
\begin{equation*}
\sum_{\gamma\in \Gamma^s,\ \tau(s)=x} |f(\gamma)|^2<\infty,
\end{equation*}
where the convergence of the sum is in the sense of summability.
We consider the bundle of Hilbert spaces 
$\ell^2_\tau(\Gamma)=\{\ell^2_\tau(\Gamma;x)\}_{x\in X}$ and 
let $\Gamma_{\ell^2_\tau(\Gamma)}$ be the semigroupoid defined as in 
Example~\ref{ex:segah}, see \eqref{e:gamabeh}.

The left regular representation induced by the aggregation map $\tau$ is the pair $(\tau;L)$ where $L\colon \Gamma
\ra \Gamma_{\ell^2_\tau(\Gamma)}$ is defined in the following way.
For each $\gamma\in\Gamma$ the operator 
$L(\gamma)$, with its domain in $\ell^2_\tau(\Gamma;\tau(d(\gamma)))$ and its range in $ \ell^2_\tau(\Gamma;\tau(c(\gamma)))$, is defined by
\begin{equation}
L(\gamma)\delta_\beta:=\begin{cases} \delta_{\gamma\beta},& c(\beta)=d(\gamma),\\ 0,& \mbox{ otherwise },
\end{cases}
\end{equation}
for all $\beta\in \Gamma$ such that $\tau(c(\beta))=x$, and then extended by linearity to
the subspace of all functions $f\in \ell^2_\tau(\Gamma;x)$ with finite support. 

(a) We assume, in addition, that $\Gamma$ satisfies the following condition.
\begin{equation}\label{e:lin}
\mbox{For each }\gamma\in\Gamma\mbox{ and  each }\beta,\beta^\prime\in\Gamma^{(d(\gamma)},\ 
\beta\neq \beta^\prime\mbox{ we have }
\gamma\beta\neq\gamma\beta^\prime.
\end{equation}
This condition holds, for example, if $\Gamma$ is a free semigroupoid, see Definition~\ref{d:frees}, or if 
it is a groupoid. It is easy to see that, under the assumption \eqref{e:lin}, 
$L(\gamma)$ is a partial isometry on its domain and hence, since its domain is dense in
$\ell^2_\tau(\Gamma;x)$, it is uniquely extended to a partial isometry on the whole space $\ell^2_\tau(\Gamma;x)$.
Then, it follows that the pair $(\tau;L)$ is an orthogonal
representation of $\Gamma$ on the bundle of Hilbert spaces $\ell^2_\tau(\Gamma)$, in the 
sense of Definition~\ref{d:serep}. 
If $\Gamma$ has a unit $\epsilon=\{\epsilon_s\}_{s\in S_\Gamma}$, then $L(\epsilon_s)$ is an
orthogonal projection for each $s\in S_\Gamma$.

In case of full aggregation, that is, $\tau$ has the range a singleton, the construction described before, for the special
case of a free semigroupoid associated to an undirected graph, can be seen in \cite{KribsPower}.

(b) If condition \eqref{e:lin} is not satisfied, the left regular representation may concern unbounded linear operators.
More precisely, for each $x\in X$, let $\cD_x$ be the linear subspace of $\ell^2_\tau(\Gamma;x)$ 
spanned by all functions $f\in \ell^2_\tau(\Gamma;x)$ with finite support, and consider the bundle
$\bD=\{\cD_x\}_{x\in X}$. Recalling the notation as in Example~\ref{ex:semes}, the pair $(\tau;L)$ is a 
semigroupoid morphism of $\Gamma$ to the semigroupoid $\Gamma_\bD$. 
For $\gamma\in\Gamma$, denote by $m_\gamma:\Gamma_{d(\gamma)} \to \Gamma^{c(\gamma)}$ the map given 
by $m_\gamma(\alpha)=\gamma\alpha$, $\alpha\in\Gamma_{d(\gamma)}$. We make the following remarks.

(c) For $\beta\in\Gamma^{c(\gamma)}$ one has the set  $m_\gamma^{-1}(\{\beta\})$ infinite if and only if 
$\delta_\beta \perp$ Dom$(L(\gamma)^*)$. Consequently, $L(\gamma)$ is closable if and only if 
$m_\gamma^{-1}(\{\beta\})$ is finite for all $\beta\in\Gamma^{c(\gamma)}$. 

Indeed, suppose that $m_\gamma^{-1}(\{\beta\})$ is infinite and consider an arbitrary finite subset 
$\mathcal F\subset m_\gamma^{-1}(\{\beta\})$. Then
\begin{equation*}\langle L(\gamma)(\sum_{\alpha\in\mathcal F} \lambda_\alpha\delta_\alpha),  \sum_{\beta'}
\mu_{\beta'}\delta_{\beta'}\rangle = \langle (\sum_{\alpha\in\mathcal F} \lambda_\alpha)\delta_\beta,
\mu_\beta\delta_\beta+\sum_{\beta'\neq\beta}\mu_{\beta'} \delta_{\beta'}\rangle = \bar \mu_\beta 
\sum_{\alpha\in\mathcal F}\lambda_\alpha.\end{equation*} 
For each $n\in\NN$ choose $\mathcal F$ having cardinality $n$ and 
$\lambda_\alpha=1$ for all $\alpha\in\mathcal F$. Then $\|\sum_{\alpha\in\mathcal F}
\lambda_\alpha\delta_\alpha\|_2=\sqrt n$ while $\sum_{\alpha\in\mathcal F}\lambda_\alpha=n$, so there can be 
no constant $C$ such that 
\begin{equation*}\|L(\gamma)(\sum_{\alpha\in\mathcal F}\lambda_\alpha\delta_\alpha)\|_2 \leq C\|
\sum_{\alpha\in\mathcal F}\lambda_\alpha\delta_\alpha\|_2\end{equation*}
for all such subsets $\mathcal F$ and finite linear combinations unless $\mu_\beta=0$. This shows that the functional 
\begin{equation*}{\rm Dom}  (L(\gamma)) \ni \sum_\alpha \lambda_\alpha\delta_\alpha \mapsto \langle L(\gamma)
(\sum_{\alpha} \lambda_\alpha\delta_\alpha),  \sum_{\beta'}\mu_{\beta'}\delta_{\beta'}\rangle \end{equation*}
cannot be bounded unless $\mu_\beta=0$, i.e.\ unless the vector $\sum_{\beta'} \mu_{\beta'}\delta_{\beta'}$ is 
orthogonal to $\delta_\beta$. Conversely, if the cardinality of $m_\gamma^{-1}(\{\beta\})$ is $N\in\NN$, it is
rather easy to see that the functional 
\begin{equation*}{\rm Dom}  (L(\gamma)) \ni \sum_\alpha \lambda_\alpha\delta_\alpha \mapsto \langle L(\gamma)
(\sum_{\alpha} \lambda_\alpha\delta_\alpha), \delta_\beta \rangle\end{equation*}
is bounded with norm $\leq N$, hence $\delta_\beta\in $ Dom$(L(\gamma)^*)$.  

(d) $L(\gamma)$ is bounded with $\|L(\gamma)\|\leq N$ if and only if there exists $N\in\NN$ such that the cardinality 
of $m_\gamma^{-1}(\{\beta\})$ is $\leq N$ for all $\beta\in\Gamma^{c(\gamma)}$.

(e) If $\gamma$ has an inverse $\gamma^{-1}$, then $L(\gamma)$ is a partial isometry with $L(\gamma)^*=L(\gamma^{-1})$.
\end{example}

\begin{example} (\emph{Representations of Free Graph $*$-Semigroupoids}) \label{ex:rfg}
Let $G=(V;E;s;r)$ be a directed graph 
and let $\FF_*^+(G)$ be the associated free graph $*$-semigroupoid with unit defined as 
in Example~\ref{ex:fsug}. Let 
$\bH=\{\cH_x\}_{x\in X}$ be a bundle of Hilbert spaces and 
consider the $*$-semigroupoid with unit $\Gamma_\bH$ defined 
as in Example~\ref{ex:segah}. Let $\tau\colon V\ra X$ be an aggregation map. For each edge $f\in E$ 
that is not a loop, let $\Phi(f):=T_f\in\cB(\cH_{\tau(s(f))},\cH_{\tau(r(f))})$ and 
$\Phi(f^*):=T_f^*\in\cB(\cH_{\tau(r(f))},\cH_{\tau(s(f))})$.
For each vertex $v\in V$, let $\Phi(e_v):=P_v\in \cB(\cH_{\tau(v)})$ be an orthogonal projection such that for any
$f\in \tilde E=E\cup E^*$ with $r(f)=v$ we have $\ran(T_f)\subseteq \ran(P_v)$. We then extend 
$\Phi:\FF_*^+(G)\ra \Gamma_{\bH}$ in the natural fashion: for any finite path
$\gamma=\gamma_1\cdots\gamma_n\in \Gamma_G$, we have $\Phi(\gamma):=
T_{\gamma_1}\cdots T_{\gamma_n}\in \cB(\cH_{\tau(s(\gamma_n))},\cH_{\tau(r(\gamma_1))})$. It is easy to see that
$(\tau;\Phi)$ is a representation of the unital $*$-semigroupoid $\FF_*^+(G)$ on the unital $*$-semigroupoid 
$\Gamma_\bH$.
\end{example}

\begin{example}\label{ex:ckt} 
(\emph{Cuntz-Krieger-Toeplitz $G$-Families as Representations of  Free Graph 
$*$-Semigroupoids}) 
Let $G=(V;E;s;r)$ be a directed graph such that
both the vertex set $V$ and the edge set $E$ are countable. A \emph{Cuntz-Krieger-Toeplitz} (CKT) 
\emph{$G$-family}, e.g.\ see \cite{KumjianPaskRaeburnRenault}, consists
in a pair $(\bP;\bS)$, where $\bP=\{P_v\mid v\in V\}$ is a bundle of mutually orthogonal projections on a Hilbert 
space $\cH$ and  $\bS=\{S_f\mid f\in E\}$ is a bundle of partial isometries on $\cH$ subject to the following conditions.

\begin{itemize}
\item[(I)] $S_f^*S_f=P_{s(f)}$ for all $f\in E$.
\item[(CKT)]  $\sum_{f\in F}S_fS_f^*\leq P_v$ for any $v\in V$ and any finite set $F\subseteq r^{-1}(v)$.
\end{itemize}

From (I) it follows that the right support $\ran(S_f^*)=\ran(P_{s(f)})$ and then from property
(CKT) it follows that $\ran(S_f)\subseteq \ran(P_{r(f)})$ for all $f\in E$ 
and that whenever $f,g\in E$ are such that $f\neq g$ and $r(f)=r(g)$
then $\ran(S_f)\perp \ran(S_g)$. Let $\tau$ be the full aggregation map on $V$, that is, $\tau(V)$ is a singleton. 
Then we observe that the CKT $G$-family $(\bP;\bS)$ 
satisfies the conditions in Example~\ref{ex:rfg} and hence it induces a
representation of the $*$-semigroupoid $\FF^+_*(G)$ on the Hilbert space $\cH$ (the bundle of Hilbert spaces with 
only one element). More precisely, for each $f\in E$ we have $\Phi(f):=S_f\in\cB(\cH)$ and $\Phi(f^*):=S_f^*$ 
and for each $v\in V$ we have $\Phi(e_v)=P_v$, and then extending $\Phi$ on $\FF^+_*(G)$ in the natural way.

In general, the $*$-representation $\Phi$ defined before is not orthogonal. However, if we consider only its restriction
$\Phi|_{\FF^+(G)}$ to the free semigroupoid $\FF^+(G)$, it is an orthogonal representation. 
This is because, if $\alpha,\beta\in
\FF^+(G)$ are such that $c(\alpha)\neq c(\beta)$ then $S_\alpha$ and $S_\beta$ have ranges in 
orthogonal subspaces of $\cH$, as a consequence of the condition (CKT), as explained before.

The CKT $G$-family $(\bP;\bS)$
is called \emph{nondegenerate} if the span of the ranges of the orthogonal projections $P_v$, $v\in V$, 
is dense in $\cH$. This implies that
\begin{equation}\label{e:suv}
\sum_{v\in V} P_v =I_\cH,
\end{equation}
where the sum converges in the strong operator topology (recall that we assumed that $V$ is countable, 
so the sum is actually a series). This fact is important since it gives us an idea of what a unital representation of  a
semigroupoid might be.

If, in addition,
\begin{itemize}
\item[(CK)] $\sum_{f\in r^{-1}(v)} S_fS_f^*=P_v$ for any $v\in V$ such that $0<|r^{-1}(v)|<\infty$,
\end{itemize}
the pair $(\bP;\bS)$ is called a \emph{Cuntz-Krieger} (CK) \emph{$G$-family}. For most of the investigations on CK 
$G$-families, $G$ is assumed to be a
\emph{row-finite graph}, that is, for any $v\in V$ we have $|r^{-1}(v)|<\infty$, and \emph{sourceless}, that is, for any
$v\in V$ we have $r^{-1}(v)\neq\emptyset$.
\end{example}

In this article we will use a more general concept of representation of a $*$-semigroupoid on 
$*$-semigroupoids of type $\Gamma_{\bH,\bD}$, see Example~\ref{ex:semeseh}. For the moment, this concept is 
motivated by the fact, see Example~\ref{ex:lereg}, that, in general, the left regular representation is made by 
unbounded operators. It will show its significance during the next section.

\begin{definition}\label{d:urep} Let $\Gamma$ be a semigroupoid and consider two bundles
of vector spaces $\bD=\{\cD_s\}_{s\in S}$ and $\bH=\{\cH_s\}_{s\in S}$, where $\cD_s$ is a dense
subspace in the Hilbert space $\cH_s$, for all $s\in S$. With notation as in 
Example~\ref{ex:semuh}, a \emph{generalised representation} of $\Gamma$ on the pair of
bundles $(\bD;\bH)$ is a pair of maps $(\phi;\Phi)$ subject to the following conditions.
\begin{itemize}
\item[(UR1)] $\phi\colon S_\Gamma\ra S$ and $\Phi\colon \Gamma\ra\Gamma_{\bH,\bD}$ are maps.
\item[(UR2)] For any $\alpha\in \Gamma$ we have $\Phi(\alpha)$ a linear operator such that
$\cD_{\phi(d(\alpha))}\subseteq \dom(\Phi(\alpha))$ and 
$\Phi(\alpha)\cD_{\phi(d(\alpha))}\subseteq\cD_{\phi(c(\alpha))}$.
\item[(UR3)] For any $(\alpha,\beta)\in \Gamma^{(2)}$ we have 
$\Phi(\alpha\beta)|_{\cD_{\phi(d(\beta))}}=\Phi(\alpha)\Phi(\beta)|_{\cD_{\phi(d(\beta))}}$.
\end{itemize}

We call the generalised representation $(\phi;\Phi)$ \emph{orthogonal} if the following property holds
\begin{itemize}
\item[(UR4)] For any $\alpha,\beta\in \Gamma$ such that
$\phi(c(\alpha))=\phi(c(\beta))$, but $c(\alpha)\neq c(\beta)$,
 it follows that $\Phi(\alpha)\cD_{\phi(d(\alpha))}\perp 
\Phi(\beta)\cD_{\phi(d(\beta))}$.
\end{itemize}

Also, if, in addition, $\Gamma$ is a $*$-semigroupoid,
then we call $(\phi;\Phi)$ a generalised $*$-representation of $\Gamma$ on the bundles $(\bD,\bH)$ if
the following conditions hold.
\begin{itemize}
\item[(UR6)] For any $\alpha\in \Gamma$ we have
$\Phi(\alpha)^*\cD_{\phi(c(\alpha))}\subseteq\cD_{\phi(d(\alpha))}$. 
\item[(UR7)] For any $\alpha\in \Gamma$ we have $\Phi(\alpha^*)|_{\cD_{\phi(c(\alpha))}}
=\Phi(\alpha)^*|_{\cD_{(\phi(c(\alpha))}}$.
\end{itemize}
\end{definition}

Let us firstly observe that in case the representation $(\phi;\Phi)$ is aggregation free, that is,
$\phi$ is injective, it is automatically orthogonal. In general, the fashion in which different pieces of the representation 
are aggregated within a bundle of Hilbert spaces makes technical obstructions. 
This is clearly seen in the interpretation of
the Toeplitz--Cuntz--Krieger systems as in Example~\ref{ex:ckt}, as a fully aggregated representation of the 
$*$-semigroupoid $\FF^+_*(G)$.
The orthogonality condition on generalised $*$-representations is essential in this article and will 
show its importance in the next section. Now, we record a first consequence of this condition.

\begin{lemma}\label{l:orth} If $(\phi;\Phi)$ is an orthogonal 
generalised $*$-representation of the $*$-semigroupoid $\Gamma$ on the pair of bundles $(\bD;\bH)$, with 
$\bD=\{\cD_s\}_{s\in S}$ and $\bH=\{\cH_s\}_{s\in S}$, 
then, for any 
$\alpha,\beta\in\Gamma$ such that $\phi(c(\alpha))=\phi(d(\beta))=s$ but $c(\alpha)\neq d(\beta)$, 
we have $\Phi(\beta)\Phi(\alpha)|_{\cD_{\phi(d(\alpha))}}=0$.
\end{lemma}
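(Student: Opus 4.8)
The plan is to reduce the claim to the orthogonality axiom (UR4) by a factorisation trick. Fix $\alpha,\beta\in\Gamma$ with $\phi(c(\alpha))=\phi(d(\beta))=s$ but $c(\alpha)\neq d(\beta)$; note that in order for the composition $\Phi(\beta)\Phi(\alpha)$ to make sense on $\cD_{\phi(d(\alpha))}$ we need to land in the right fibre, so I will first record that $\Phi(\alpha)\cD_{\phi(d(\alpha))}\subseteq \cD_{\phi(c(\alpha))}=\cD_s$ by (UR2), and $\cD_s=\cD_{\phi(d(\beta))}\subseteq\dom(\Phi(\beta))$, again by (UR2); so the operator $\Phi(\beta)\Phi(\alpha)$ is well-defined on $\cD_{\phi(d(\alpha))}$. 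Let $h\in\cD_{\phi(d(\alpha))}$ and set $k:=\Phi(\alpha)h\in\cD_s$; the goal is $\Phi(\beta)k=0$.

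The key step is to test $\Phi(\beta)k$ against an arbitrary vector of $\cD_{\phi(c(\beta))}$ and use the adjoint axioms together with orthogonality. Using (UR6)--(UR7), for any $g\in\cD_{\phi(c(\beta))}$ we have $\langle \Phi(\beta)k,g\rangle_{\cH_{\phi(c(\beta))}}=\langle k,\Phi(\beta)^*g\rangle_{\cH_s}=\langle k,\Phi(\beta^*)g\rangle_{\cH_s}$, and $\Phi(\beta^*)g\in\cD_{\phi(d(\beta))}=\cD_s$ by (UR6). So it suffices to show that $k=\Phi(\alpha)h$ is orthogonal to $\Phi(\beta^*)g$ for every such $g$. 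Now observe that $\beta^*$ has $c(\beta^*)=d(\beta)$ and $\alpha$ has $c(\alpha)$, with $\phi(c(\beta^*))=\phi(d(\beta))=s=\phi(c(\alpha))$ but $c(\beta^*)=d(\beta)\neq c(\alpha)$; hence the orthogonality axiom (UR4) applies directly to the pair $(\alpha,\beta^*)$ and yields $\Phi(\alpha)\cD_{\phi(d(\alpha))}\perp \Phi(\beta^*)\cD_{\phi(d(\beta^*))}$. Since $k\in\Phi(\alpha)\cD_{\phi(d(\alpha))}$ and $\Phi(\beta^*)g\in\Phi(\beta^*)\cD_{\phi(c(\beta))}=\Phi(\beta^*)\cD_{\phi(d(\beta^*))}$, we get $\langle k,\Phi(\beta^*)g\rangle_{\cH_s}=0$. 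As $g\in\cD_{\phi(c(\beta))}$ was arbitrary and $\cD_{\phi(c(\beta))}$ is dense in $\cH_{\phi(c(\beta))}$, we conclude $\Phi(\beta)k=0$, i.e.\ $\Phi(\beta)\Phi(\alpha)h=0$.

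The only subtlety, and the step I would be most careful about, is bookkeeping with the aggregation map $\phi$ and the fibre indices: one must check that $c(\beta^*)$ equals $d(\beta)$ (this is axiom (I1) for the involution, available since any generalised $*$-representation is defined on a $*$-semigroupoid), and that $\phi(c(\beta^*))=\phi(d(\beta))$ matches $\phi(c(\alpha))$ while the unaggregated objects $c(\beta^*)=d(\beta)$ and $c(\alpha)$ remain distinct — this is exactly the hypothesis of the lemma, transported through the involution. Once these identifications are in place the argument is essentially just "(UR4) applied to $(\alpha,\beta^*)$, then take adjoints". I do not expect any genuine analytic obstacle; density of $\cD_{\phi(c(\beta))}$ in $\cH_{\phi(c(\beta))}$ disposes of the closure issue and no boundedness of $\Phi(\beta)$ is needed.
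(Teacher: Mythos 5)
Your proposal is correct and follows essentially the same route as the paper's own proof: both apply the orthogonality axiom to the pair $(\alpha,\beta^*)$, use (UR6)--(UR7) to identify $\Phi(\beta^*)\cD_{\phi(d(\beta^*))}$ with $\Phi(\beta)^*\cD_{\phi(c(\beta))}$, pass the adjoint across the inner product, and finish by density of $\cD_{\phi(c(\beta))}$ in $\cH_{\phi(c(\beta))}$. The fibre bookkeeping you flag is exactly what the paper checks, so there is no gap.
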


\begin{proof} Let us first observe that, since $\Phi(\alpha)\cD_{\phi(d(\alpha))}\subseteq \cD_s$ we have
$\dom(\Phi(\beta)\Phi(\alpha))\supseteq \cD_{\phi(d(\alpha))}$. We consider $\beta^*\in \Gamma$ and observe
that $\phi(c(\alpha))=s=\phi(c(\beta^*))$ but $c(\alpha)\neq d(\beta)=c(\beta^*)$ hence, by the orthogonality condition,
on the one hand we have
\begin{equation*}\Phi(\alpha)\cD_{\phi(d(\alpha))}\perp \Phi(\beta^*)\cD_{\phi(d(\beta^*))},\end{equation*}
and, on the other hand, by (UR7) we have
\begin{equation*}
\Phi(\beta^*)\cD_{\phi(d(\beta^*))}=\Phi(\beta)^*\cD_{\phi(c(\beta))},
\end{equation*}
hence, for any $h\in \cD_{\phi(d(\alpha))}$ and any $k\in \cD_{\phi(c(\beta))}$ we have 
\begin{equation*}
\langle \Phi(\beta)\Phi(\alpha)h,k\rangle_{\cH_{\phi(c(\beta))}}=
\langle \Phi(\alpha)h,\Phi(\beta)^*k\rangle_{\cH_{\phi(c(\alpha))}}=0.
\end{equation*}
Since $\cD_{\phi(c(\beta))}$ is dense in $\cH_{\phi(c(\beta))}$, from here we get 
$\Phi(\beta)\Phi(\alpha)|_{\cD_{\phi(d(\alpha))}}=0$.
\end{proof}

\section{Generalised Dilations of Positive Semidefinite Maps on $*$-Semigroupoids}\label{s:ud}

In this section we will get dilations of operator valued  partially positive semidefinite maps on $*$-semigroupoids by 
unbounded operators, in the spirit of Definition~\ref{d:urep}. The fundamental concept in this enterprise is that of
partially positive semidefiniteness.

Let $(\Gamma;S;d;c;\cdot;*;\epsilon)$ be a $*$-semigroupoid with unit,
consider a bundle of Hilbert spaces 
$\bH=\{\cH_x\}_{x\in X}$ over the field $\KK$, where $\KK$ can be either $\RR$ or $\CC$,
for some $X\neq\emptyset$, and let $\tau\colon S\ra X$ be an aggregation map.

\begin{definition}\label{d:hm}  The class $\fH_{\bH,\tau}(\Gamma)$ of
\emph{Hermitian $\bH$-valued maps on $\Gamma$ and $\tau$-coherent},  consists in
all maps $T\colon \Gamma\ra \Gamma_\bH$
subject to the following conditions.
\begin{itemize}
\item[(HM1)] $T$ is \emph{$\tau$-coherent} in the sense that,
$T(\alpha)\in \cB(\cH_{\tau(d(\alpha))},\cH_{\tau(c(\alpha))})$, for all $\alpha\in\Gamma$. 
\item[(HM2)] $T$ is \emph{Hermitian}, in the sense that $T(\alpha^*)=T(\alpha)^*$ for all $\alpha\in\Gamma$.
\end{itemize} 
\end{definition}

\begin{definition}\label{d:psm} A map $T\in\fH_{\bH,\tau}(\Gamma)$ is called \emph{partially 
$n$-positive semidefinite}, for some $n\in\NN$,
if for any $s\in S $, any $\alpha_1,\ldots,\alpha_n\in \Gamma^s$, and any $h_1,\ldots,h_n$ such 
that $h_j\in \cH_{\tau(d(\alpha_j))}$ for all $j=1,\ldots,n$, we have
\begin{equation}\label{e:psd1}
\sum_{i,j=1}^n \langle T(\alpha_i^*\alpha_j)h_j,h_i\rangle_{\cH_{\tau(d(\alpha_i))}}\geq 0.
\end{equation}

The map $T$ is called \emph{partially positive semidefinite} if it is partially $n$-positive semidefinite 
for all $n\in\NN$, equivalently, if, for any $s\in S $ and
for any finitely supported cross-section $h=\{h_\alpha\}_{\alpha\in\Gamma^s}$, where 
$h_\alpha\in \cH_{\tau(d(\alpha))}$ for all $\alpha\in\Gamma^s$, we have
  \begin{equation}\label{e:psd2}
  \sum_{\alpha,\beta\in\Gamma^s} \langle T(\beta^*\alpha)h_\alpha,h_\beta\rangle_{\cH_{\tau(d(\beta))}}
  \geq 0.
  \end{equation}
We denote by $\fP_{\bH,\tau}(\Gamma)$ the class of all maps $T\in\fH_{\bH,\tau}(\Gamma)$ that are partially
positive semidefinite.
\end{definition}

\begin{remark}\label{r:part} A partially positive semidefinite map $T$ is actually a bundle of positive semidefinite 
maps $\{T_s\}_{s\in S}$, where $T_s=T|_{\Gamma^s}$, for all $s\in S$. Because of this,
in the following we will drop \emph{partially} whenever 
it will be clear from the context and hence, a partially positive 
semidefinite map will be simply called positive semidefinite.
\end{remark}

\begin{remark} 
For a given $n\in\NN$, the map $T$ is $n$-positive semidefinite if and only if
for any 
$s\in S $ and any $\alpha_1,\ldots,\alpha_n\in \Gamma^s$ the matrix operator 
$[T(\alpha_i^*\alpha_j)]_{i,j=1}^n$ is positive semidefinite as an operator in the Hilbert space
$\cH_{\tau(d(\alpha_1))}\oplus \cdots \oplus \cH_{\tau(d(\alpha_n))}$.

In particular, the Hermitian condition (HM2) in Definition~\ref{d:hm} is needed only when $\KK=\RR$. This is
because,
if $\KK=\CC$ then any $2$-positive semidefinite pair $(\tau;T)$ automatically satisfies the Hermitian 
condition (HM2). Indeed, for any $\alpha\in \Gamma$ letting $s=c(\alpha)$, $\alpha_1=\epsilon_s$, and
$\alpha_2=\alpha$, the $2\times 2$ block operator matrix
\begin{equation*}
\begin{bmatrix}
T(\epsilon_s) & T(\alpha) \\ T(\alpha^*) & T(\alpha^*\alpha)
\end{bmatrix}\geq 0,
\end{equation*} 
when viewed as an operator on $\cH_{\tau(s)}\oplus\cH_{\tau(d(\alpha))}$. This implies that this
block operator matrix is Hermitian, hence $T(\alpha^*)=T(\alpha)^*$.

If $\KK=\RR$ then the statement from before is not true, so we have to assume the 
Hermitian condition (HM2) additionally.
\end{remark}

Positive semidefiniteness of the map $T$ is a necessary condition for it 
to admit dilations in a rather general sense, that we make precise in the following.

\begin{definition}\label{d:udilation} Given a  map $T\in\fH_{\bH,\tau}(\Gamma)$,
a quadruple $(\bK;\bD;\Phi;\bV)$ is called a \emph{generalised dilation} of $T$ if it satisfies the following conditions.
\begin{itemize}
\item[(UD1)] $\bK=\{\cK_x\}_{x\in X}$ is a bundle of Hilbert spaces.
\item[(UD2)] $\bD=\{\cD_x\}_{x\in X}$ is a bundle of vector spaces such that, for each $x\in X$,
$\cD_x$ is a dense subspace of $\cK_x$.
\item[(UD3)] $\bV=\{V_s\}_{s\in S }$ is a bundle of operators, where
$V_s\in\cB(\cH_{\tau(s)},\cK_{\tau(s)})$ is such that $\ran(V_s)\subseteq\cD_{\tau(s)}$ for all $s\in S$.
\item[(UD4)] The pair $(\tau;\Phi)$ is an unbounded $*$-representation of $\Gamma$ on the pair 
$(\bD;\bK)$, in the sense of Definition~\ref{d:urep}, such that
\begin{equation}\label{e:talat}
T(\alpha) = V_{c(\alpha)}^* \Phi(\alpha) V_{d(\alpha)},\quad \alpha\in\Gamma.
\end{equation}
\end{itemize}
\end{definition}

\begin{remark}\label{r:tala} An equivalent formulation of \eqref{e:talat} is that, for all 
$\alpha\in\Gamma$, $h\in\cH_{\tau(d(\alpha))}$, and $k\in\cH_{\tau(c(\alpha))}$, we have
\begin{equation}\label{e:tala}
\langle T(\alpha)h,k\rangle_{\cH_{\tau(c(\alpha))}} =
\langle  \Phi(\alpha) V_{d(\alpha)}h,V_{c(\alpha)}k\rangle_{\cK_{\tau(c(\alpha))}}.
\end{equation}
This is because, by axiom (UD3), we have $\ran(V_{d(\alpha)})\subseteq \cD_{\tau(d(\alpha))}$ and,
see Example~\ref{ex:semuh}, $\Phi(\alpha)\in\cL^*(\cD_{\tau(d(\alpha))},\cD_{\tau(c(\alpha))})$,
hence we have $\cD_{\tau(d(\alpha))}\subseteq\dom(\Phi(\alpha))$. In this fashion,
although, in general, the operator $\Phi(\alpha)$ is not everywhere defined on
$\cK_{\tau(d(\alpha))}$, the operator on the right side in \eqref{e:talat} is everywhere defined.
\end{remark}

Positive semidefiniteness of the map $T$ is a necessary condition for it to admit generalised dilations. 

\begin{proposition}\label{p:psmu} If $T\in\fH_{\bH,\tau}(\Gamma)$ has a generalised dilation
$(\bK;\bD;\Phi;\bV)$ then it is positive semidefinite.
\end{proposition}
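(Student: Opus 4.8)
The plan is to unwind the definition of a generalised dilation and reduce the positive semidefiniteness inequality \eqref{e:psd2} to the manifest positivity of a squared norm in the dilation space. Fix $s\in S$ and a finitely supported cross-section $h=\{h_\alpha\}_{\alpha\in\Gamma^s}$ with $h_\alpha\in\cH_{\tau(d(\alpha))}$. For each $\alpha\in\Gamma^s$ we have $c(\alpha)=s$, so $V_{c(\alpha)}=V_s\in\cB(\cH_{\tau(s)},\cK_{\tau(s)})$ and $\ran(V_s)\subseteq\cD_{\tau(s)}$; likewise $\Phi(\alpha)V_{d(\alpha)}h_\alpha\in\cK_{\tau(s)}=\cK_{\tau(c(\alpha))}$, so all the vectors $\Phi(\alpha)V_{d(\alpha)}h_\alpha$ live in the single Hilbert space $\cK_{\tau(s)}$. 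The natural candidate for the vector whose squared norm gives \eqref{e:psd2} is
\begin{equation*}
\xi:=\sum_{\alpha\in\Gamma^s}\Phi(\alpha)V_{d(\alpha)}h_\alpha\in\cK_{\tau(s)}.
\end{equation*}

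Next I would compute $\|\xi\|_{\cK_{\tau(s)}}^2=\sum_{\alpha,\beta\in\Gamma^s}\langle\Phi(\alpha)V_{d(\alpha)}h_\alpha,\Phi(\beta)V_{d(\beta)}h_\beta\rangle_{\cK_{\tau(s)}}$ and identify each summand with $\langle T(\beta^*\alpha)h_\alpha,h_\beta\rangle$. For a fixed pair $\alpha,\beta\in\Gamma^s$, since $c(\alpha)=c(\beta)=s$ and the dilation $*$-representation is orthogonal, I distinguish two cases. If $c(\alpha)\neq c(\beta)$ — which cannot happen here since both equal $s$, so actually this case is vacuous; the relevant dichotomy instead comes from comparing $\beta^*\alpha$ with the composability in $\Gamma$. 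More carefully: $\beta^*\in\Gamma$ has $d(\beta^*)=c(\beta)=s$ and $c(\beta^*)=d(\beta)$, and $\alpha$ has $c(\alpha)=s=d(\beta^*)$, so $(\beta^*,\alpha)\in\Gamma^{(2)}$ and $\beta^*\alpha$ is defined with $d(\beta^*\alpha)=d(\alpha)$ and $c(\beta^*\alpha)=d(\beta)$. Using (UR7) to move $\Phi(\beta)$ across the inner product as $\Phi(\beta)^*$, then (UR3) and (UR7) together to combine $\Phi(\beta)^*\Phi(\alpha)$ into $\Phi(\beta^*)\Phi(\alpha)$ and then into $\Phi(\beta^*\alpha)$ on the dense domain, and finally the intertwining identity \eqref{e:tala} with $\alpha$ replaced by $\beta^*\alpha$, one gets $\langle\Phi(\alpha)V_{d(\alpha)}h_\alpha,\Phi(\beta)V_{d(\beta)}h_\beta\rangle=\langle\Phi(\beta^*\alpha)V_{d(\alpha)}h_\alpha,V_{d(\beta)}h_\beta\rangle=\langle T(\beta^*\alpha)h_\alpha,h_\beta\rangle_{\cH_{\tau(d(\beta))}}$. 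Summing over $\alpha,\beta$ yields $\|\xi\|^2=\sum_{\alpha,\beta\in\Gamma^s}\langle T(\beta^*\alpha)h_\alpha,h_\beta\rangle\geq0$, which is exactly \eqref{e:psd2}.

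The main technical point to get right — and the step I expect to require the most care — is the justification of the manipulation $\langle\Phi(\alpha)u,\Phi(\beta)w\rangle=\langle\Phi(\beta^*\alpha)u,w\rangle$ where $u=V_{d(\alpha)}h_\alpha\in\cD_{\tau(d(\alpha))}$ and $w=V_{d(\beta)}h_\beta\in\cD_{\tau(d(\beta))}$. Here one must check all domain conditions: $\Phi(\alpha)u\in\cD_{\tau(c(\alpha))}=\cD_{\tau(s)}$ by (UR2), which lies in $\dom(\Phi(\beta)^*)$ because $\cD_{\tau(c(\beta))}\subseteq\dom(\Phi(\beta)^*)$ by the definition of $\cL^*$ in Example~\ref{ex:semuh}; then $\langle\Phi(\alpha)u,\Phi(\beta)w\rangle=\langle\Phi(\beta)^*\Phi(\alpha)u,w\rangle$. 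Next $\Phi(\beta)^*|_{\cD_{\tau(c(\beta))}}=\Phi(\beta^*)|_{\cD_{\tau(c(\beta))}}$ by (UR7), and since $\Phi(\alpha)u\in\cD_{\tau(s)}=\cD_{\tau(d(\beta^*))}$ we may replace $\Phi(\beta)^*\Phi(\alpha)u$ by $\Phi(\beta^*)\Phi(\alpha)u$; finally (UR3) for the composable pair $(\beta^*,\alpha)$ gives $\Phi(\beta^*)\Phi(\alpha)u=\Phi(\beta^*\alpha)u$ since $u\in\cD_{\tau(d(\alpha))}$. The orthogonality axiom (UR4) is in fact \emph{not} needed for this proposition — it only becomes relevant for the converse construction — so I would not invoke Lemma~\ref{l:orth} here; a remark to that effect may be worth including. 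Finiteness of the support of $h$ guarantees all sums are finite, so no convergence issues arise.
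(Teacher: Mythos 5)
Your proof is correct and follows essentially the same route as the paper's: each term $\langle T(\beta^*\alpha)h_\alpha,h_\beta\rangle$ is rewritten via the dilation identity, (UR3), and the adjoint properties (UR6)--(UR7) as an inner product in $\cK_{\tau(s)}$, so the double sum collapses to the squared norm of $\sum_{\alpha\in\Gamma^s}\Phi(\alpha)V_{d(\alpha)}h_\alpha$. Your remark that the orthogonality axiom (UR4) is not needed is also accurate, since all $\alpha,\beta\in\Gamma^s$ share the codomain $s$ (the paper's citation of (UR4) at the end of its proof is really playing the role of (UR7) in your careful domain bookkeeping).
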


\begin{proof} Let $(\bK;\bD;\Phi;\bV)$ be a dilation of $T$.
Then, for any $s\in S $ and
  for any finitely supported cross-section $h=\{h_\alpha\}_{\alpha\in\Gamma^s}$, where 
  $h_\alpha\in \cH_{\tau(d(\alpha))}$ for all $\alpha\in\Gamma^s$, we have
  \begin{align*}
  \sum_{\alpha,\beta\in\Gamma^s} \langle 
 T(\beta^*\alpha)h_\alpha,h_\beta\rangle_{\cH_{\tau(d(\beta))}}
  & = \sum_{\alpha,\beta\in\Gamma^s} \langle V_{d(\beta)}^* 
 \Phi(\beta^*\alpha)V_{d(\alpha)}
  h_\alpha,h_\beta\rangle_{\cH_{\tau(d(\beta))}} \\
  & = \sum_{\alpha,\beta\in\Gamma^s} \langle  \Phi(\beta^*)\Phi(\alpha)V_{d(\alpha)}
  h_\alpha,V_{d(\beta)}h_\beta\rangle_{\cK_{\tau(d(\beta))}} \\
  & = \sum_{\alpha,\beta\in\Gamma^s} \langle  \Phi(\alpha)V_{d(\alpha)}
  h_\alpha,\Phi(\beta)V_{d(\beta)}h_\beta\rangle_{\cK_{\tau(s)}} \\
  & = \biggl\langle \sum_{\alpha\in\Gamma^s} \Phi(\alpha)V_{d(\alpha)}h_\alpha,
  \sum_{\alpha\in\Gamma^s} \Phi(\alpha)V_{d(\alpha)}h_\alpha
  \biggr\rangle_{\cK_{\tau(s)}}
  \geq 0,
  \end{align*}
  where we have used the axioms (UR3) and (UR4), see Definition~\ref{d:urep}, as well as the
  axiom (UD3) in Definition~\ref{d:udilation}.
\end{proof}

\begin{definition}\label{d:minimalu}
A generalised dilation $(\bK;\bD;\Phi;\bV)$ of the map $T\in\fP_{\bH,\tau}(\Gamma)$ is called \emph{minimal} if
\begin{equation*}
\cD_{x}= \lin \{\Phi(\alpha)V_{d(\alpha)}\cH_{\tau(d(\alpha))}\mid \alpha\in\Gamma\mbox{ such 
that }\tau(c(\alpha))=x\},\quad x\in X.
\end{equation*}
\end{definition}

\begin{remark}\label{r:epsu}
In order to put the previous definition in perspective, let $T\in\fH_{\bH,\tau}(\Gamma)$ and let
$(\bK;\bD;\Phi;\bV)$ be a generalised dilation
of the pair $(\tau;T)$. It is a simple exercise to see that, for any $s\in S $, the 
operator $\Phi(\epsilon_s)\in\cL^*(\cD_{\tau(s)})$ has a unique extension to 
an orthogonal projection in $\cK_{\tau(s)}$, yet denoted by $\Phi(\epsilon_s)\in\cB(\cK_{\tau(s)})$.
Indeed, on the one hand, 
$\Phi(\epsilon_s)|_{\cD_{\tau(s)}}=\Phi(\epsilon_s\epsilon_s)|_{\cD_{\tau(s)}}
=\Phi(\epsilon_s)\Phi(\epsilon_s)|_{\cD_{\tau(s)}}$ and 
$\Phi(\epsilon_s)|_{\cD_{\tau(s)}}=\Phi(\epsilon_s^*)|_{\cD_{\tau(s)}}
=\Phi(\epsilon_s)^*|_{\cD_{\tau(s)}}$, and then it is easy to see that 
$\Phi(\epsilon_s)$ is the restriction to $\cD_{\tau(s)}$ of an orthogonal projection, 
yet denoted by $\Phi(\epsilon_s)\in\cB(\cK_{\tau(s)})$.

In addition,
\begin{equation}\label{e:clol}
\clos\lin\{\Phi(\alpha)V_{d(\alpha)}\cH_{\tau(d(\alpha))}\mid \alpha\in\Gamma^s\}
\subseteq\ran(\Phi(\epsilon_s)).\end{equation}
 Indeed, for any $\alpha\in\Gamma^s$ we have
\begin{equation*}
\Phi(\epsilon_s)\Phi(\alpha)V_{\tau(d(\alpha))}h=\Phi(\epsilon_s\alpha)V_{\tau(d(\alpha))}h =
\Phi(\alpha)V_{\tau(d(\alpha))}h,\quad h\in \cH_{\tau(d(\alpha))},
\end{equation*}
hence $\Phi(\epsilon_s)$ acts like the identity operator on the space 
$\lin\{\Phi(\alpha)V_{d(\alpha)}\cH_{\tau(d(\alpha))}\mid \alpha\in\Gamma^s\}$ and, consequently,
on its closure. This proves \eqref{e:clol}.
\end{remark}

At this level of generality there is one more special condition 
on the dilation which we have to consider, and that will play a very important role.

\begin{definition}\label{d:orthogonalu} A generalised dilation $(\bK;\bD;\Phi;\bV)$ of the map
$T\in\fP_{\bH,\tau}(\Gamma)$ is called 
\emph{orthogonal} if the representation $(\tau;\Phi)$ is orthogonal, see Definition~\ref{d:urep}, that is,
 whenever $\alpha,\beta\in \Gamma$ are such that $c(\alpha)\neq c(\beta)$ but
$\tau(c(\alpha))=\tau(c(\beta))$, we have 
$\Phi(\alpha)\cD_{\tau(d(\alpha))}\perp \Phi(\beta)\cD_{\tau(d(\beta))}$.
\end{definition}

The following proposition provides a set of assumptions on a generalised dilation that contains a minimal one. 

\begin{proposition}\label{p:mingendil}
Let $(\bK;\bD;\Phi;\bV)$, with $\bK=\{\cK_x\}_{x\in X}$ and $\bD=\{\cD_x\}_{x\in X}$, be an
orthogonal generalised dilation of the map $T\in\fP_{\bH,\tau}(\Gamma)$. 
For arbitrary $x\in X$
let
\begin{equation}\label{e:pede}
\cE_x:= \lin \{\Phi(\alpha)V_{d(\alpha)}\cH_{\tau(d(\alpha))}\mid \alpha\in\Gamma\mbox{ such 
that }\tau(c(\alpha))=x\},
\end{equation}
provided that the set $\{\alpha\in \Gamma \mid \tau(c(\alpha))=x\}$ is not empty, and let $\cE_x$ be the null space, in 
the opposite case, and let $\cG_x:=\clos \cE_x$. We assume that for any $\alpha\in\Gamma$
\begin{equation}\label{e:peale}
P_{\cG_{\tau(c(\alpha))}}\ran(V_{d(\alpha)})\subseteq \cD_{\tau(c(\alpha))}\mbox{ and }
P_{\cG^\perp_{\tau(c(\alpha))}}\ran(V_{d(\alpha)})\subseteq \cD_{\tau(c(\alpha))}.
\end{equation}
Then, letting $\bG=\{\cG_x\}_{x\in X}$, $\bE=\{\cE_x\}_{x\in X}$,
$\widetilde \Phi(\alpha)=P_{\cG_{\tau(c(\alpha))}}\Phi|_{\cE_{\tau(d(\alpha))}}$ for all $\alpha\in \Gamma$, and
$\widetilde V_s=P_{\cG_{\tau(s)}}V_s$ for all $s\in S$,
the quadruple $(\bG;\bE;\widetilde\Phi;\widetilde\bV)$ is a minimal orthogonal generalised dilation  of the pair 
$(\tau;T)$.
\end{proposition}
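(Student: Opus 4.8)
The plan is to verify the four defining properties of a minimal orthogonal generalised dilation in turn for the quadruple $(\bG;\bE;\widetilde\Phi;\widetilde\bV)$, reusing the orthogonality of the original dilation and the compatibility hypothesis \eqref{e:peale}. First I would check (UD1)--(UD3): the fibres $\cG_x=\clos\cE_x$ are Hilbert spaces, each $\cE_x$ is dense in $\cG_x$ by construction, and $\widetilde V_s=P_{\cG_{\tau(s)}}V_s$ is bounded with range in $\cE_{\tau(s)}$, since by definition $\ran(V_s)$ already lands in $\cE_{\tau(s)}$ (take $\alpha=\epsilon_s$ in \eqref{e:pede} and use $\Phi(\epsilon_s)V_s=V_s$, which holds because $\ran(V_s)\subseteq\cD_{\tau(s)}$ and $\Phi(\epsilon_s)$ acts as the identity there, as recorded in Remark~\ref{r:epsu}); hence $P_{\cG_{\tau(s)}}V_s=V_s$ and in fact $\widetilde V_s=V_s$. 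Minimality is then immediate from the very definition of $\cE_x$ once I know $\widetilde\Phi(\alpha)\widetilde V_{d(\alpha)}h=\Phi(\alpha)V_{d(\alpha)}h$ for all $\alpha$ and $h$.

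Next I would establish that $(\tau;\widetilde\Phi)$ is a generalised $*$-representation on $(\bE;\bG)$, i.e.\ verify (UR1)--(UR7) of Definition~\ref{d:urep}. The key structural fact, which I would prove first, is that each $\cE_x$ reduces every $\Phi(\alpha)$ in the appropriate sense: for $\alpha\in\Gamma$ with $\tau(d(\alpha))=\tau(c(\alpha))$-compatible indices, $\Phi(\alpha)$ maps $\cE_{\tau(d(\alpha))}$ into $\cE_{\tau(c(\alpha))}$ (this is clear from \eqref{e:pede}, since $\Phi(\alpha)\Phi(\beta)V_{d(\beta)}h=\Phi(\alpha\beta)V_{d(\beta)}h$ when composable, and is zero otherwise by Lemma~\ref{l:orth} when $c(\beta)\neq d(\alpha)$ but $\tau(c(\beta))=\tau(d(\alpha))$). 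A symmetric argument using (UR7) shows $\Phi(\alpha)^*$ maps $\cE_{\tau(c(\alpha))}$ into $\cE_{\tau(d(\alpha))}$. Consequently on $\cE_{\tau(d(\alpha))}$ one has $\widetilde\Phi(\alpha)=P_{\cG_{\tau(c(\alpha))}}\Phi(\alpha)|_{\cE_{\tau(d(\alpha))}}=\Phi(\alpha)|_{\cE_{\tau(d(\alpha))}}$, so $\widetilde\Phi$ is literally the restriction of $\Phi$; then (UR2), (UR3), (UR6) follow by restriction, and (UR7) follows because on $\cE$ the original $\Phi(\alpha^*)$ and $\Phi(\alpha)^*$ already agree, so in particular the adjoint of $\widetilde\Phi(\alpha)$ computed in $\cG_{\tau(c(\alpha))}$ restricted to $\cE$ agrees with $\widetilde\Phi(\alpha^*)$. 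Orthogonality (UR4), hence (UD-orthogonality in the sense of Definition~\ref{d:orthogonalu}), is inherited: if $c(\alpha)\neq c(\beta)$ but $\tau(c(\alpha))=\tau(c(\beta))=x$, then already $\Phi(\alpha)\cD_{\tau(d(\alpha))}\perp\Phi(\beta)\cD_{\tau(d(\beta))}$ in $\cK_x$, and since $\cE_x\subseteq$ these closures this orthogonality passes to $\widetilde\Phi(\alpha)\cE_{\tau(d(\alpha))}\perp\widetilde\Phi(\beta)\cE_{\tau(d(\beta))}$.

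Finally I would check the dilation identity (UD4): for $\alpha\in\Gamma$, $h\in\cH_{\tau(d(\alpha))}$, $k\in\cH_{\tau(c(\alpha))}$,
\begin{equation*}
\langle \widetilde V_{c(\alpha)}^*\widetilde\Phi(\alpha)\widetilde V_{d(\alpha)}h,k\rangle
=\langle \widetilde\Phi(\alpha)V_{d(\alpha)}h,V_{c(\alpha)}k\rangle_{\cG_{\tau(c(\alpha))}}
=\langle \Phi(\alpha)V_{d(\alpha)}h,V_{c(\alpha)}k\rangle_{\cK_{\tau(c(\alpha))}}
=\langle T(\alpha)h,k\rangle,
\end{equation*}
where the first equality uses $\widetilde V_s=V_s$ (with range in $\cE_{\tau(s)}\subseteq\cG_{\tau(s)}$), the second uses that $V_{d(\alpha)}h\in\cE_{\tau(d(\alpha))}$ so $\widetilde\Phi(\alpha)$ equals $\Phi(\alpha)$ there and that $V_{c(\alpha)}k\in\cG_{\tau(c(\alpha))}$ so inserting $P_{\cG}$ changes nothing, and the last is \eqref{e:tala}.

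The main obstacle is the need for hypothesis \eqref{e:peale}: without it one does not know that $\widetilde\Phi(\alpha)=P_{\cG_{\tau(c(\alpha))}}\Phi(\alpha)|_{\cE_{\tau(d(\alpha))}}$ has the right domain/range behaviour with respect to the smaller dense subspaces, since a priori $P_{\cG}$ applied to vectors of $\cD$ or to $\ran(V_s)$ need not land in $\cD$, and the axioms (UR2), (UR6) for the cut-down representation require exactly that. So the real content of the argument is to use \eqref{e:peale} to ensure that the compressions stay inside the prescribed dense subspaces $\cE_x$; once that bookkeeping is done, everything else is a routine transfer of the corresponding property of the ambient dilation $(\bK;\bD;\Phi;\bV)$. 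I would therefore be careful to invoke \eqref{e:peale} precisely at the points where a compressed operator must be shown to map into $\cE$, and otherwise keep the verification of (UD1)--(UD4) and (UR1)--(UR7) as short restriction arguments.
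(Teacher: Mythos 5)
Your proposal collapses at one specific but load-bearing point: the claim that $\ran(V_s)\subseteq\cE_{\tau(s)}$, hence $\widetilde V_s=V_s$. Your justification misreads Remark~\ref{r:epsu}: that remark only shows that $\Phi(\epsilon_s)$ is (the restriction of) an orthogonal projection acting as the identity on $\lin\{\Phi(\alpha)V_{d(\alpha)}\cH_{\tau(d(\alpha))}\mid\alpha\in\Gamma^s\}$, i.e.\ on $\cE$; it does \emph{not} say that $\Phi(\epsilon_s)$ is the identity on all of $\cD_{\tau(s)}$, so $\Phi(\epsilon_s)V_s=V_s$ is exactly what you would need to prove, not something you may quote. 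And it is false for a general orthogonal generalised dilation: take a minimal dilation, enlarge each $\cK_x$ to $\cK_x\oplus\CC$ and each $\cD_x$ to $\cD_x\oplus\CC$, extend every $\Phi(\alpha)$ by $0$ on the new summand, and replace $V_s$ by $h\mapsto V_sh\oplus\ell_s(h)$ for some nonzero bounded functional $\ell_s$. All axioms of an orthogonal generalised dilation and the hypothesis \eqref{e:peale} persist, yet $\ran(V_s)\not\subseteq\cG_{\tau(s)}$ and $\widetilde V_s\neq V_s$. Indeed, if your claim were true, the definition $\widetilde V_s=P_{\cG_{\tau(s)}}V_s$ and the second half of hypothesis \eqref{e:peale} would be pointless; their presence is a signal that the component of $\ran(V_s)$ orthogonal to $\cG$ is precisely what the proposition has to handle. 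As a consequence, your verification of the dilation identity (whose second equality uses $V_{d(\alpha)}h\in\cE_{\tau(d(\alpha))}$ to even place it in the domain of $\widetilde\Phi(\alpha)$), your check that $\ran(\widetilde V_s)$ lies in the new dense subspaces, and your minimality argument all rest on the false identification $\widetilde V_s=V_s$, and — despite your closing paragraph flagging \eqref{e:peale} as the crux — your actual argument never uses \eqref{e:peale} at all.

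The part you do prove correctly, $\Phi(\beta)\cE_{\tau(d(\beta))}\subseteq\cE_{\tau(c(\beta))}$ via Lemma~\ref{l:orth}, is exactly the first half of the paper's key observation \eqref{e:pebe}; what is missing is its companion, $\Phi(\beta)\bigl(\cE_{\tau(d(\beta))}^\perp\cap\cD_{\tau(d(\beta))}\bigr)\subseteq\cE_{\tau(c(\beta))}^\perp\cap\cD_{\tau(c(\beta))}$, proved by the same orthogonality argument. With both invariances in hand, the dilation identity is obtained not by identifying $\widetilde V$ with $V$ but by writing
\begin{equation*}
\langle T(\alpha)h,k\rangle
=\bigl\langle P_{\cG_{\tau(c(\alpha))}}\Phi(\alpha)\bigl(P_{\cG_{\tau(d(\alpha))}}+P_{\cG_{\tau(d(\alpha))}^\perp}\bigr)V_{d(\alpha)}h,\,V_{c(\alpha)}k\bigr\rangle ,
\end{equation*}
where the insertion of $P_{\cG_{\tau(c(\alpha))}}$ on the left is legitimate because $\Phi(\alpha)V_{d(\alpha)}h\in\cE\subseteq\cG$, hypothesis \eqref{e:peale} guarantees that both components $P_{\cG}V_{d(\alpha)}h$ and $P_{\cG^\perp}V_{d(\alpha)}h$ lie in $\cD_{\tau(d(\alpha))}$ so that $\Phi(\alpha)$ may be applied to each, and the second invariance shows the $\cG^\perp$ component is annihilated by $P_{\cG_{\tau(c(\alpha))}}$; one then replaces $V_{c(\alpha)}k$ by $P_{\cG}V_{c(\alpha)}k=\widetilde V_{c(\alpha)}k$ since the other vector already lies in $\cG$. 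The same decomposition yields $\widetilde\Phi(\alpha)\widetilde V_{d(\alpha)}h=\Phi(\alpha)V_{d(\alpha)}h$, which is what minimality really requires. So the skeleton of your plan is right, but the step that makes \eqref{e:peale} do actual work is absent, and the shortcut you use instead is not available.
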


\begin{proof} 
From Definition~\ref{d:udilation} and Definition~\ref{d:urep} it is easy to see that $\cE_x\subseteq \cD_x$, for all 
$x\in X$. Then observe that, for any $\beta\in \Gamma$ we have that 
\begin{equation}\label{e:pebe}
\Phi(\beta)\cE_{\tau(d(\beta))}\subseteq \cE_{\tau(c(\beta))}\mbox{ and }
\Phi(\beta) (\cE_{\tau(d(\beta))}^\perp \cap
\cD_{\tau(d(\beta))})\subseteq  \cE_{\tau(c(\beta))}^\perp\cap \cD_{\tau(c(\beta))}.\end{equation} 
Indeed, let $\alpha\in\Gamma$ be such that $\tau(c(\alpha))=\tau(d(\beta))$. If $d(\beta)=c(\alpha)$ we have
\begin{equation*}
\Phi(\beta)\Phi(\alpha)V_{d(\alpha)}\cH_{\tau(d(\alpha))}=\Phi(\beta\alpha)V_{d(\alpha)}\cH_{\tau(d(\alpha))}
\subseteq \cE_{\tau(c(\beta))}.
\end{equation*}
If $d(\beta)\neq c(\alpha)$ then, in view of orthogonality, we can apply Lemma~\ref{l:orth} and get
\begin{equation*}
\Phi(\beta)\Phi(\alpha)V_{d(\alpha)}\cH_{\tau(d(\alpha))}=\{0\}\subseteq \cE_{\tau(c(\beta))}.
\end{equation*}
In conclusion, the first inclusion in \eqref{e:pebe} is proven. A similar argument proves the latter inclusion 
in \eqref{e:pebe}.

These show that, on the one hand, letting 
$\widetilde \Phi(\alpha)=P_{\cG_{\tau(c(\alpha))}}\Phi|_{\cE_{\tau(d(\alpha))}}$ for all $\alpha\in \Gamma$, the pair 
$(\tau;\widetilde \Phi)$ is a $*$-representation of $\Gamma$ on the pair of bundles $(\bE;\bG)$. On the other hand, 
letting $\widetilde V_s=P_{\cG_s}V_s$ for all $s\in S$, these show that,
for any $\alpha\in\Gamma$, $h\in\cH_{\tau(d(\alpha))}$, and $k\in\cH_{\tau(c(\alpha))}$, see \eqref{e:tala}, we have
\begin{align*}
\langle T(\alpha)h,k\rangle_{\cH_{\tau(c(\alpha))}} & =
\langle  \Phi(\alpha) V_{d(\alpha)}h,V_{c(\alpha)}k\rangle_{\cK_{\tau(c(\alpha))}} \\
& = \langle P_{\cG_{\tau(c(\alpha))}} \Phi(\alpha)V_{d(\alpha)}h,V_{c(\alpha)}k\rangle_{\cK_{\tau(c(\alpha))}} \\
& = \langle P_{\cG_{\tau(c(\alpha))}} \Phi(\alpha)\bigl(P_{\cG_{\tau(d(\alpha))}} 
+ P_{\cG_{\tau(d(\alpha))}^\perp}\bigr)V_{d(\alpha)}h, V_{c(\alpha)}k\rangle_{\cK_{\tau(c(\alpha))}} \\
\intertext{which, in view of  \eqref{e:peale} and \eqref{e:pebe}, equals}
& = \langle P_{\cG_{\tau(c(\alpha))}} \Phi(\alpha) P_{\cG_{\tau(d(\alpha))}} 
V_{d(\alpha)}h, P_{\cG_{\tau(c(\alpha))}}V_{c(\alpha)}k\rangle_{\cK_{\tau(c(\alpha))}} \\
& = \langle \widetilde \Phi(\alpha)\widetilde V_{d(\alpha)}h,\widetilde V_{c(\alpha)}k\rangle_{\cK_{\tau(c(\alpha))}}.
\end{align*}
Also, for any $x\in X$, a similar reasoning as before shows that
\begin{align*} 
\cE_x & = \lin \{\Phi(\alpha)V_{d(\alpha)}\cH_{\tau(d(\alpha))}\mid \alpha\in\Gamma,\
c(\alpha)\in\tau^{-1}(\{x\}\}\\
& = \lin \{\widetilde\Phi(\alpha)\widetilde V_{d(\alpha)}\cH_{\tau(d(\alpha))}\mid \alpha\in\Gamma,\ 
c(\alpha)\in\tau^{-1}(\{x\}\},
\end{align*}
hence the generalised dilation $(\bG;\bE;\widetilde\Phi;\widetilde\bV)$ is minimal.
\end{proof}

Minimal dilations have special properties and it is preferable to work with them, instead of the general 
dilations, in view of a uniqueness property in the following sense.

\begin{definition} \label{d:ueu} Two generalised dilations 
$(\bK;\bD;\Phi;\bV)$ and $(\bK^\prime;\bD^\prime;\Phi^\prime;\bV^\prime)$
of the map $T$ are called \emph{unitarily equivalent} if
there exists a bundle of operators $\bU=\{U_x\}_{x\in X}$ subject to the following conditions.
\begin{itemize}
\item[(UE1)] For each $x\in X$, $U_x\colon \cK_x\ra\cK^\prime_x$ is unitary.
\item[(UE2)] For each $x\in X$, $U_x\cD_x=\cD^\prime_x$.
\item[(UE3)] The bundle $\bU$ intertwines the representations $\Phi$ and $\Phi^\prime$ in the sense
that
\begin{equation*}
U_{\tau(c(\alpha))} \Phi(\alpha)|_{\cD_{\tau(d(\alpha))}} = 
\Phi^\prime(\alpha)U_{\tau(d(\alpha))}|_{\cD_{\tau(d(\alpha))}},\quad \alpha\in\Gamma.
\end{equation*}
\item[(UE4)] The bundle $\bU$ maps the bundle $\bV$ to the bundle $\bV^\prime$ in the sense that 
$U_{\tau(s)}V_s=V^\prime_s$ for all $s\in S $.
\end{itemize}
\end{definition}

The main theorem of this section says that, in general, in order for a map $T\in\fH_{\bH,\tau}(\Gamma)$ to 
admit an unbounded 
dilation it is sufficient to be positive semidefinite, that is, $T\in\fP_{\bH,\tau}(\Gamma)$, 
hence the converse of Proposition~\ref{p:psmu} holds.
Also, for positive semidefinite maps $T$, one can always find an orthogonal and 
minimal generalised dilation, which is unique 
up to unitary equivalence. In view of Remark~\ref{r:part}, the proof follows by firstly 
constructing the dilations on each fibre $T_s=T|_{\Gamma^s}$, 
for each $s\in S$ and then we have to put all these together in such a way 
that uniqueness holds and, for this, the idea of orthogonality shows its importance.

\begin{theorem}\label{t:nagyu}
Let $T\in\fH_{\bH,\tau}(\Gamma)$. The following assertions are equivalent.
\begin{itemize}
 \item[(1)] $T$ is positive semidefinite, in the sense of Definition~\ref{d:psm}.   
\item[(2)] $T$ has a generalised dilation $(\bK;\bD;\Phi;\bV)$.
\end{itemize}

In addition, if assertion (1) holds then a dilation $(\bK;\bD;\Phi;\bV)$ of $T$
can always be obtained orthogonal in the sense of Definition~\ref{d:orthogonalu}, and
minimal in the sense of Definition~\ref{d:minimalu} and, in this case, it
is unique up to a unitary equivalence, in the sense of Definition~\ref{d:ueu}.
\end{theorem}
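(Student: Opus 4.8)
The plan is to prove the equivalence $(1)\Leftrightarrow(2)$ together with the supplementary orthogonality, minimality, and uniqueness claims, following the classical Kolmogorov/GNS-type decomposition, but carried out fibrewise over $S$ and then reassembled with attention to the aggregation map $\tau$. The implication $(2)\Ra(1)$ is already Proposition~\ref{p:psmu}, so the real content is $(1)\Ra(2)$ and the structural refinements.

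For $(1)\Ra(2)$, fix $s\in S$ and work with the single positive semidefinite map $T_s=T|_{\Gamma^s}$, in the spirit of Remark~\ref{r:part}. I would form the algebraic direct sum $\cF_s:=\bigoplus_{\alpha\in\Gamma^s}\cH_{\tau(d(\alpha))}$ of finitely supported cross-sections, and define a sesquilinear form on it by
\begin{equation*}
[h,h']_s:=\sum_{\alpha,\beta\in\Gamma^s}\langle T(\beta^*\alpha)h_\alpha,h_\beta\rangle_{\cH_{\tau(d(\beta))}}.
\end{equation*}
By hypothesis this form is nonnegative; quotienting by its isotropic subspace $\cN_s$ and completing yields a Hilbert space $\cK_{x}$ for each $x\in X$ obtained by amalgamating over $\{s:\tau(s)=x\}$ — more precisely, one first builds $\cK_s$ for each $s$ and then, since the target bundle is indexed by $X$, one takes the Hilbert space direct sum of the $\cK_s$ over each $\tau$-fibre. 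Write $\cD_x$ for the dense subspace that is the image of the finitely supported cross-sections. The map $V_s\colon\cH_{\tau(s)}\to\cK_{\tau(s)}$ sends $h$ to the class of the cross-section supported at $\alpha=\epsilon_s$ with value $h$; positivity of the $2$-block matrix involving $\epsilon_s$ shows $V_s$ is bounded. The representation is defined on the dense subspace by left multiplication: $\Phi(\gamma)$ sends the class of a cross-section $\{h_\alpha\}_{\alpha\in\Gamma^{d(\gamma)}}$ to the class of $\{h'_\beta\}_{\beta\in\Gamma^{c(\gamma)}}$ where $h'_{\gamma\alpha}=h_\alpha$ appropriately aggregated (using condition \eqref{e:lin}-type injectivity coming from the $*$-semigroupoid axioms, or more carefully summing over the preimage $m_\gamma^{-1}$). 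One checks, using the Hermitian property (HM2) and the partial positive semidefiniteness, that $\Phi(\gamma)$ is well defined on the quotient, satisfies (UR2), (UR3), (UR6), (UR7) — the adjoint relations follow from $T(\alpha^*)=T(\alpha)^*$ — so $(\tau;\Phi)$ is a generalised $*$-representation on $(\bD;\bK)$. Finally \eqref{e:talat} is a direct computation: $\langle\Phi(\alpha)V_{d(\alpha)}h,V_{c(\alpha)}k\rangle=\langle T(\alpha)h,k\rangle$ by unwinding the definitions of $V$ and the form, using $\epsilon_{c(\alpha)}^*\alpha=\alpha$ and $\alpha^*\epsilon_{c(\alpha)}=\alpha^*$.

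For the supplementary claims: the dilation just built is minimal essentially by construction, because $\cD_x$ is spanned by classes of cross-sections, and a cross-section supported at $\alpha$ with value $h$ equals $\Phi(\alpha)V_{d(\alpha)}h$ — so $\cD_x=\lin\{\Phi(\alpha)V_{d(\alpha)}\cH_{\tau(d(\alpha))}:\tau(c(\alpha))=x\}$, which is Definition~\ref{d:minimalu}. Orthogonality (Definition~\ref{d:orthogonalu}) is the point where the fibrewise construction over $S$ (rather than over $X$) pays off: if $c(\alpha)\neq c(\beta)$ but $\tau(c(\alpha))=\tau(c(\beta))=x$, then $\Phi(\alpha)V_{d(\alpha)}h$ lives in the summand $\cK_{c(\alpha)}$ and $\Phi(\beta)V_{d(\beta)}k$ in the summand $\cK_{c(\beta)}$ of the direct sum $\cK_x=\bigoplus_{s:\tau(s)=x}\cK_s$, and distinct summands are orthogonal — so (UR4) holds automatically. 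For uniqueness up to unitary equivalence (Definition~\ref{d:ueu}), suppose $(\bK';\bD';\Phi';\bV')$ is another minimal orthogonal generalised dilation. Define $U_x$ on the dense set by $U_x\Phi(\alpha)V_{d(\alpha)}h:=\Phi'(\alpha)V'_{d(\alpha)}h$ and extend by linearity; \eqref{e:talat} for both dilations together with (UR3), (UR6), (UR7) and orthogonality (which, via Lemma~\ref{l:orth}, kills the cross terms $\Phi(\beta)\Phi(\alpha)$ when $c(\alpha)\neq d(\beta)$) shows that $U_x$ is isometric and has dense range, hence extends to a unitary $\cK_x\to\cK'_x$; by minimality $U_x\cD_x=\cD'_x$, and the intertwining properties (UE3), (UE4) are immediate from the definition of $U_x$.

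The main obstacle I expect is bookkeeping around the aggregation map in the definition of $\Phi(\gamma)$ when condition \eqref{e:lin} fails, i.e.\ when $m_\gamma$ is not injective: the value of $\Phi(\gamma)$ at the class of $\{h_\alpha\}$ at position $\beta\in\Gamma^{c(\gamma)}$ must be $\sum_{\alpha\in m_\gamma^{-1}(\{\beta\})}h_\alpha$, and one must check this still lands in the (algebraically defined) dense subspace — essentially because the form $[\cdot,\cdot]_s$ is computed through the $T(\beta^*\alpha)$ and the relation $(\gamma\alpha)^*(\gamma\alpha')=\alpha^*\gamma^*\gamma\alpha'$ is \emph{not} simplifiable to $\alpha^*\alpha'$ in a general $*$-semigroupoid — so the well-definedness of $\Phi(\gamma)$ on the quotient $\cF_s/\cN_s$ requires a genuine (though standard, via a Cauchy–Schwarz estimate in the semi-inner product, exactly as in the Sz-Nagy argument) verification that $[\Phi(\gamma)h',\Phi(\gamma)h']_s\le C\,[h',h']_s$ fails \emph{unless one restricts to the closable case}; this is precisely why the representation is allowed to be by unbounded operators, and one has to be careful that $\Phi(\gamma)$ is at least densely defined and closable, which follows from (HM2) since the adjoint relation supplies a densely defined formal adjoint. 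Handling this correctly — defining $\Phi(\gamma)$ only on the dense subspace $\cD$, verifying it maps $\cD$ into $\cD$, and checking that the two halves of (UR7) agree as operators on $\cD$ rather than globally — is the delicate part; everything else is a routine transcription of the classical Sz-Nagy dilation argument to the semigroupoid-with-aggregation setting.
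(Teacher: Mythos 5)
Your overall architecture is the same as the paper's: a fibrewise Kolmogorov/Sz.-Nagy construction over each $s\in S$, assembled into $\cK_x=\bigoplus_{s\in\tau^{-1}(\{x\})}\cK_s$, with $V_s$ coming from the unit $\epsilon_s$, $\Phi(\gamma)$ given by left translation with summation over the preimages $m_\gamma^{-1}(\{\beta\})$, minimality by construction, orthogonality from the fibrewise direct sum, and uniqueness via the standard isometry-on-the-dense-span argument. The only structural difference is presentational: you quotient $\cF_s$ by the isotropic subspace $\cN_s$ and complete, whereas the paper works with the range $\cG_s$ of the Gram operator $\bT_s$ equipped with the induced inner product (a reproducing-kernel realisation, cf.\ Remark~\ref{r:rkhs}), which avoids quotients altogether.

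There is, however, one genuine gap at exactly the point you flag as delicate. In the quotient picture you must prove that $\Phi(\gamma)$ descends, i.e.\ that the pushforward of a cross-section in $\cN_{d(\gamma)}$ lies in $\cN_{c(\gamma)}$; your text instead argues that the Cauchy--Schwarz bound $[\Phi(\gamma)h',\Phi(\gamma)h']\le C[h',h']$ is unavailable and then appeals to closability via (HM2). Closability is beside the point here: one cannot speak of the closure of $\Phi(\gamma)$ before knowing it is a well-defined map on the quotient, so as written the key step is not established. The fix is short and does not use any boundedness: for finitely supported cross-sections $g,g'$ one has the formal adjoint identity $[\Phi(\gamma)g,g']_{c(\gamma)}=[g,\Phi(\gamma^*)g']_{d(\gamma)}$, obtained purely by re-indexing finite sums and using (HM2); since, by Cauchy--Schwarz for the semi-inner product, $\cN_s$ coincides with the radical of $[\cdot,\cdot]_s$ (this is \eqref{e:kerebet} in the paper), $g\in\cN_{d(\gamma)}$ forces $[\Phi(\gamma)g,g']=0$ for all $g'$, hence $\Phi(\gamma)g\in\cN_{c(\gamma)}$. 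The same identity then yields (UR7) on the dense subspace, exactly as the paper's relation \eqref{e:omast}. The paper sidesteps the descent issue entirely because $(\Phi(\alpha)f)_\gamma:=f_{\alpha^*\gamma}$ is manifestly well defined on functions $f\in\cG_{d(\alpha)}$; its only burden is to check that the image lies again in $\ran(\bT_{c(\alpha)})$, which is your preimage-sum formula \eqref{e:ega}. Finally, note that to satisfy (UR2)/(UR7) as stated you must also extend $\Phi(\alpha)$ to all of $\cD_{\tau(d(\alpha))}$, not just to the summand coming from $\cG_{d(\alpha)}$; the paper does this by declaring $\Phi(\alpha)$ to vanish on the summands $\cG_t$ with $\tau(t)=\tau(d(\alpha))$, $t\neq d(\alpha)$ (see \eqref{e:cekat}--\eqref{e:phia}), and this extension is also what makes your orthogonality and uniqueness arguments literally correct at the level of the aggregated bundles.
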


\begin{proof} $(2)\Ra(1)$. This implication is the content of Proposition~\ref{p:psmu}.\medskip

$(1)\Ra(2)$. We divide the proof in six steps.\medskip

\textbf{Step 1.} \emph{Construction of the bundles 
$\bK=\{\cK_x\}_{x\in X}$ and $\bD=\{\cD_x\}_{x\in X}$.}\medskip

For each $s\in S $,
let $\cF_s$ denote the vector space of all cross-sections
$f=\{f_\gamma\}_{\gamma\in\Gamma^s}$ subject to 
the condition that $f_\gamma\in\cH_{\tau(d(\gamma))}$, for all $\gamma\in\Gamma^s$, and let
$\cG_{s,0}$ be the subspace of $\cF_s$ of all families 
$g=\{g_\gamma\}_{\gamma\in \Gamma^s}$ of finite support. If $g,g^\prime$ are two vectors in $\cF_s$ such that at least one of them is in $\cG_{s,0}$, we define
\begin{equation}\label{e:gegep}
\langle g,g^\prime\rangle_{\cG_{s,0}}:=\sum_{\beta\in \Gamma^s} 
\langle g_\beta,g^\prime_\beta\rangle_{\cH_{\tau(d(\beta))}},\quad g,g^\prime\in\Gamma^s.
\end{equation}
We observe that the definition in \eqref{e:gegep} 
 yields an inner product $\langle \cdot,\cdot\rangle_{\cG_{s,0}}$
 when restricting to $\cG_{s,0}\times \cG_{s,0}$.

 Let $\bT_s\colon\cG_{s,0}\ra\cF_s$ be the linear operator 
defined by
\begin{equation}\label{e:betes}
(\bT_s g)_\gamma:=\sum_{\beta\in\Gamma^s} T(\gamma^*\beta)g_\beta,\quad g\in\cG_{s,0},\ \gamma\in\Gamma^s.
\end{equation}
We denote by $\cG_s$ the range of $\bT_s$, that is, $f=\{f_\gamma\}_{\gamma\in\Gamma^s}\in\cF_s$
belongs to $\cG_s$ if, by definition, there exists $g\in\cG_{s,0}$ such that
\begin{equation}\label{e:fegabet}
f_\gamma=(\bT_s g)_\gamma=\sum_{\beta\in\Gamma^s} T(\gamma^*\beta)g_\beta,
\quad \gamma\in\Gamma^s.
\end{equation}

For every $g,g^\prime \in\cG_{s,0}$ we have
\begin{align*}
\langle \bT_sg,g^\prime\rangle_{\cG_{s,0}} 
& = \langle \sum_{\gamma\in\Gamma^s} (\bT_sg)_\gamma,
g^\prime_\gamma\rangle_{\cH_{\tau(d(\gamma))}} 
 =  \langle \sum_{\gamma\in\Gamma^s} \sum_{\beta\in\Gamma^s} 
T(\gamma^*\beta)g_\beta,g^\prime_\gamma\rangle_{\cH_{\tau(d(\gamma))}} \\
& =  \sum_{\gamma\in\Gamma^s} \sum_{\beta\in\Gamma^s} 
\langle g_\beta,T(\gamma^*\beta)^*g^\prime_\gamma\rangle_{\cH_{\tau(d(\gamma))}} 
 = \sum_{\gamma\in\Gamma^s} \sum_{\beta\in\Gamma^s} 
 \langle g_\beta,T(\beta^*\gamma)g^\prime_\gamma\rangle_{\cH_{\tau(d(\beta))}} \\
 & = \sum_{\beta\in\Gamma^s} \sum_{\gamma\in\Gamma^s} 
 \langle g_\beta,T(\beta^*\gamma)g^\prime_\gamma\rangle_{\cH_{\tau(d(\beta))}} 
  = \sum_{\beta\in\Gamma^s} 
 \langle g_\beta,\sum_{\gamma\in\Gamma^s} 
 T(\beta^*\gamma)g^\prime_\gamma\rangle_{\cH_{\tau(d(\beta))}} \\
& = \sum_{\beta\in\Gamma^s} 
 \langle g_\beta,(\bT_s g^\prime)_\beta\rangle_{\cH_{\tau(d(\beta))}} 
  = \langle g,\bT_sg^\prime\rangle_{\cG_{s,0}},
\end{align*}
where we took into account that the sums have only a finite number of nonzero terms and hence we
can change the summation order and that $T$ is Hermitian, in the sense of (HM2) in Definition~\ref{d:hm}.
In conclusion, $\bT_s$ has the following symmetry property
\begin{equation}\label{e:tes}
\langle \bT_sg,g^\prime\rangle_{\cG_{s,0}}=
\langle g,\bT_sg^\prime\rangle_{\cG_{s,0}},\quad g,g^\prime \in\cG_{s,0}.
\end{equation}

On $\cG_{s,0}$ we define the pairing $\langle\cdot,\cdot\rangle_{\bT_s}$ by
\begin{equation}\label{e:gegepa}
\langle g,g^\prime\rangle_{\bT_s}:=\langle \bT_s g,g^\prime\rangle_{\cG_{s,0}}=
\sum_{\beta,\gamma\in\Gamma^s} \langle 
T(\beta^*\gamma)g_\gamma,g^\prime_\beta\rangle_{\cH_{\tau(d(\beta))}},\quad g,g^\prime\in \cG_{s,0},
\end{equation}
which, due to the positive semidefiniteness assumption (a), is positive semidefinite. Also, due to \eqref{e:tes},
 it is (conjugate) 
symmetric, and linear in the first variable. In particular, $\langle\cdot,\cdot,\rangle_{\bT_s}$
satisfies the Schwarz inequality and hence
\begin{equation}\label{e:kerebet}
\ker(\bT_s)  = \{g\in\cG_{s,0}\mid \langle g,g\rangle_{\bT_s}=0\} 
 = \{g\in\cG_{s,0}\mid \langle g,g^\prime\rangle_{\bT_s}=0\mbox{ for all }g^\prime\in\cG_{s,0}\}.
\end{equation}

Let $f,f^\prime\in\cG_s$ be arbitrary, hence there exist $g,g^\prime\in\cG_{s,0}$ such that 
$f=\bT_sg$ and $f^\prime=\bT_sg^\prime$. Then, by \eqref{e:tes} we have
\begin{equation*}
\langle g,g^\prime\rangle_{\bT_s}=\langle \bT_sg,g^\prime\rangle_{\cG_{s,0}}=
\langle g,\bT_sg^\prime\rangle_{\cG_{s,0}}.
\end{equation*}
This observation enables us 
to show that the definition of the pairing $\langle\cdot,\cdot\rangle_{\cG_s}$
given by
\begin{equation}\label{e:fefep}
\langle f,f^\prime\rangle_{\cG_s} := \langle g,g^\prime\rangle_{\bT_s}=
\langle \bT_sg,g^\prime\rangle_{\cG_{s,0}}=\langle f,g^\prime\rangle_{\cG_{s,0}},\end{equation}
where $f=\bT_s g$, $f^\prime=\bT_sg^\prime$, for some  $g,g^\prime\in\cG_{s,0}$,
is correct, that is, it does not depend on the particular representations $f=\bT_s g$  and 
$f^\prime=\bT_sg^\prime$, for $g,g^\prime\in\cG_{s,0}$. 

Clearly, the pairing $\langle\cdot,\cdot\rangle_{\cG_s}$ defined at \eqref{e:fefep} 
is positive semidefinite, (conjugate) symmetric, and linear in 
the first variable. In the following we show that it is positive definite as well, 
hence an inner product.
Indeed, let $f\in\cG_s$ be such that $\langle f,f\rangle_{\cG_s}=0$. By the positive 
semidefiniteness assumption (1), it follows that $\langle f,f^\prime\rangle_{\cG_s}=0$ for all 
$f^\prime\in \cG_s$. For any $\gamma\in\Gamma^s$ and any $h\in\cH_{\tau(d(\gamma))}$, let 
$\delta_\gamma h\in\cG_{s,0}$ be defined by
\begin{equation}\label{e:deltah}
(\delta_\gamma h)_\beta:=\begin{cases} h, & \beta=\gamma,\\ 0, & \beta\neq\gamma,
\end{cases},\quad \beta\in\Gamma^s.
\end{equation} 
Letting $f^\prime=\bT_s(\delta_\gamma f_\gamma)\in\cG_s$, by \eqref{e:tes} 
and \eqref{e:fefep} it follows
\begin{equation*}
0=\langle f,f^\prime\rangle_{\cG_s}=\langle f,\delta_\gamma f_\gamma\rangle_{\cG_{s,0}}
= \sum_{\beta\in\Gamma^s} \langle f_\beta,(\delta_\gamma 
f_\gamma)_\beta\rangle_{\cH_{\tau(d(\beta))}}
=\langle f_\gamma,f_\gamma\rangle_{\cH_{\tau(d(\gamma))}},
\end{equation*}
hence $f_\gamma=0$ for all $\gamma\in\Gamma^s$, that is, $f=0$.

Finally,  let $\cL_s$ be the completion of the inner product space 
$(\cG_s,\langle\cdot,\cdot\rangle_{\cG_s})$ to a Hilbert space and then, for any $x\in X$ define
\begin{equation}\label{e:kax}
\cK_x:=\bigoplus_{s\in\tau^{-1}(\{x\})} \cL_s,
\end{equation}
that is, $\cK_x$ is the orthogonal sum of all Hilbert spaces $\cL_s$, where the sum is taken over all
$s\in S $ such that $\tau(s)=x$. Then, when considering $\cL_s$ as subspaces of $\cK_x$, for all
$s\in S$ such that $\tau(s)=x$, we define
\begin{equation}\label{e:dax}
\cD_x:=\lin\{\cG_s,\mid s\in\tau^{-1}(\{x\})\}.
\end{equation}
Clearly, $\cD_x$ is dense in $\cK_x$ for all $x\in X$. Also, let us observe that the spaces
$\cG_s$, when viewed as subspaces of $\cK_x$, for all $s\in S$ such that $\tau(s)=x$, are 
mutually orthogonal.
\medskip

\textbf{Step 2.} \emph{Construction of the bundle of operators $\bV=\{V_s\}_{s\in S }$.}
\medskip

For each $s\in S $ we define the linear operator $V_s\colon \cH_{\tau(s)}\ra \cK_{\tau(s)}$ 
by
\begin{equation}\label{e:veseh}
V_s h:=\{T(\gamma^*)h\}_{\gamma\in\Gamma^s},\quad h\in\cH_{\tau(s)}.
\end{equation} We show that, by this definition, 
the range of $V_s$ is in $\cG_s$.

Indeed, for any $h\in \cH_{\tau(s)}$
and any $\gamma\in\Gamma^s$, with notation as in \eqref{e:deltah}, we have
\begin{equation}\label{e:tegah}
T(\gamma^*)h=\sum_{\beta\in\Gamma^s}T(\gamma^*\beta)(\delta_{\epsilon_s} h)_\beta,
\end{equation}
in particular, in view of the definition of the operator $\bT_s$ as in \eqref{e:betes}, this implies that
\begin{equation}\label{e:vesehab}
V_sh=\bT_s(\delta_{\epsilon_s} h),\quad h\in\cH_{\tau(s)},
\end{equation}
where $\epsilon$ is the unit of $\Gamma$, hence $V_s h\in\cG_s$.

Since $\cG_s\subseteq \cL_s$, which, modulo the canonical embedding of $\cL_s$ 
in $\cK_{\tau(s)}$, see \eqref{e:kax}, is regarded as 
a subspace of $\cK_{\tau(s)}$, we actually have $V_s\colon \cH_{\tau(s)}\ra \cK_{\tau(s)}$.
We observe that, for each $h\in\cH_{\tau(s)}$, we have
\begin{align*}
\langle V_sh,V_sh\rangle_{\cG_s} & = 
\langle \bT_s(\delta_{\epsilon_s}h),\bT_s(\delta_{\epsilon_s}h)\rangle_{\cG_s} \\
& = \langle \bT_s(\delta_{\epsilon_s}h),\delta_{\epsilon_s}h\rangle_{\cG_{s,0}} \\
& =\sum_{\gamma\in\Gamma^s} \langle T(\gamma^*)h,
(\delta_{\epsilon_s} h)_{\gamma}\rangle_{\cH_{\tau(d(\gamma))}} \\
&  =\langle T(\epsilon_s)h,h\rangle_{\cH_{\tau(s)}}\leq 
\|T(\epsilon_s)\|\langle h,h\rangle_{\cH_{\tau(s)}},
\end{align*}
hence $V_s$ is bounded.

We can also calculate, for later use, the adjoint $V_s^*$. In order to do this, for any $f\in\cG_s$ 
and any $h\in\cL_s\subseteq\cH_{\tau(s)}$, we have
\begin{align*}
\langle f,Vh\rangle_{\cG_s}=\langle f,\{T(\gamma^*)h\}_{\gamma\in\Gamma^s}\rangle_{\cG_s}
=\sum_{\gamma\in\Gamma^s} 
\langle f_\gamma,(\delta_{\epsilon_s}h)_\gamma\rangle_{\cH_{\tau(d(\gamma))} }
=\langle f_{\epsilon_s},h\rangle_{\cH_{\tau(s)}},
\end{align*}
hence, 
\begin{equation}\label{e:vestar}
V_s^*f=f_{\epsilon_s},\quad f\in\cG_s.
\end{equation}
Also, $V^*_s k=0$ for all $k\in\cK_{\tau(s)}\ominus \cL_s$.\medskip

\textbf{Step 3.} \emph{Construction of the $*$-representation 
$\Phi\colon \Gamma\ra\Gamma_{\bK,\bD}$.}\medskip

For the bundle of Hilbert spaces $\bK=\{\cK_x\}_{x\in X}$, see \eqref{e:kax},
and the bundle of subspaces $\bD=\{\cD_x\}_{x\in X}$, see \eqref{e:dax},
recall the definition of the $*$-semigroupoid with unit $\Gamma_{\bK,\bD}$ in Example~\ref{ex:semuh}.
For any $\alpha\in\Gamma$, we define 
$\Phi(\alpha)\colon\cG_{d(\alpha)}\ra\cG_{c(\alpha)}$ by
\begin{equation}\label{e:omaf}
\Phi(\alpha)f:=\{f_{\alpha^*\gamma}\}_{\gamma\in \Gamma^{c(\alpha)}},\quad f\in\cG_{d(\alpha)}.
\end{equation}
We have firstly have to show that the range of $\Phi(\alpha)$ is indeed in $\cG_{c(\alpha)}$, that is, we have to show
that, for any $f\in\cG_{d(\alpha)}$ there exists $e\in\cG_{c(\alpha),0}$ such 
that $\Phi(\alpha)f=\bT_{c(\alpha)} e$. To this end, since $f\in\cG_{d(\alpha)}$, by definition
there exists $g\in\cG_{d(\alpha),0}$ such that $f=\bT_{d(\alpha)}g$, that is, 
\begin{equation*}
f_\eta=(\bT_{d(\alpha)}g)_\eta=\sum_{\zeta\in\Gamma^{d(\alpha)}} T(\eta^*\zeta)g_\zeta,
\quad \eta\in\Gamma^{d(\alpha)}.
\end{equation*} Then, define 
$e=\{e_\lambda\}_{\lambda\in\Gamma^{c(\alpha)}}\in\cG_{c(\alpha),0}$ in the following way: for
each $\lambda\in\Gamma^{c(\alpha)}$ let
\begin{equation}\label{e:ega}
e_\lambda=\begin{cases} 0,& \mbox{ if the equation }\alpha\beta=\lambda\mbox{ has no solution }
\beta\mbox{ in }
\Gamma_{d(\lambda)}^{d(\alpha)}, \\[2ex] \displaystyle\sum_{\beta\in\Gamma_{d(\lambda)}^{d(\alpha)},\ 
\alpha\beta=\lambda}g_\beta, & \mbox{ otherwise.}
\end{cases}
\end{equation}
We observe that, since $g$ has finite support, the sum in the definition of 
$e_\lambda$ has only a finite  
number of nonzero terms, hence always makes sense, and that $e$ itself has finite support, 
more precisely, \begin{equation*}\supp(e)\subseteq \{\alpha\eta\mid \eta\in\supp(g)\}.\end{equation*}
Then, for any $\gamma\in\Gamma^{c(\alpha)}$, we have
\begin{align}\nonumber
\sum_{\lambda\in\Gamma^{c(\alpha)}} \! T(\gamma^*\lambda)e_\lambda  & = \!\!
\sum_{\lambda\in\Gamma^{c(\alpha)}} \!\!
T(\gamma^*\lambda)\!\!
\sum_{\beta\in\Gamma_{d(\lambda)}^{d(\alpha)},\ \alpha\beta=\lambda}\!\!\! g_\beta \\ 
& =\!\! \sum_{\beta\in\Gamma^{d(\alpha)}} \!T(\gamma^*\alpha\beta) g_\beta
  =f_{\alpha^*\gamma} = \bigl(\Phi(\alpha)f\bigr)_\gamma,\label{e:sulam}
\end{align}
hence $\Phi(\alpha)f\in \cG_{c(\alpha)}$.

We now show that $\Phi$ is a $*$-representation of $\Gamma$ on the bundle of vector spaces
$\cG=\{\cG_s\}_{s\in S}$. Indeed, for any $(\alpha,\beta)\in \Gamma^{(2)}$, 
let $f\in\cG_{d(\beta)}$ and
$g=\Phi(\beta)f$ hence, in view of \eqref{e:omaf}, we have $g_\gamma=f_{\beta^*\gamma}$ for all
$\gamma\in\Gamma^{c(\beta)}$ and then, since $c(\beta)=d(\alpha)$, we have 
$\Phi(\alpha)g=\{g_{\alpha^*\eta}\}_{\eta\in\Gamma^{c(\alpha)}}$, hence
\begin{equation*}
g_{\alpha^*\eta}=f_{\beta^*\alpha^*\eta}=
(\Phi(\alpha\beta)f)_{\eta},\quad \eta\in \Gamma^{c(\alpha)},
\end{equation*}
which proves that
\begin{equation}\label{e:omab}
\Phi(\alpha\beta)=\Phi(\alpha)\Phi(\beta),\quad \alpha,\beta\in\Gamma.
\end{equation}

Further on, for any $\alpha\in\Gamma$, let 
$f\in\cG_{d(\alpha)}$ and $f^\prime\in\cG_{c(\alpha)}$ be arbitrary. Then there exist 
$g\in\cG_{d(\alpha),0}$ and $g^\prime\in\cG_{c(\alpha),0}$ such that $f=\bT_{d(\alpha)}g$ and
$f^\prime=\bT_{c(\alpha)}g^\prime$ hence
\begin{align}
\langle \Phi(\alpha)f,f^\prime\rangle_{\cG_{c(\alpha)}} & = 
\sum_{\gamma\in\Gamma^{c(\alpha)} }
\langle f_{\alpha^*\gamma},g_\gamma^\prime\rangle_{\cH_{\tau(d(\gamma))}} \nonumber\\
& = \sum_{\gamma\in\Gamma^{c(\alpha)}} \sum_{\eta\in\Gamma^{d(\alpha)}}
\langle T(\gamma^*\alpha\eta)g_\eta,g^\prime_\gamma\rangle_{\cH_{\tau(d(\gamma))}} \nonumber\\
& =  \sum_{\eta\in\Gamma^{d(\alpha)}} \sum_{\gamma\in\Gamma^{c(\alpha)}} \langle g_\eta,
T(\eta^*\alpha^*\gamma)g^\prime_\gamma\rangle_{\cH_{\tau(d(\eta))}} \label{e:falfas}\\
& = \sum_{\eta\in\Gamma^{d(\alpha)}} \langle g_\eta,f^\prime_{\alpha\eta}
\rangle_{\cH_{\tau(d(\eta))}}\nonumber \\
& = \langle f, \Phi(\alpha^*)f^\prime\rangle_{\cG_{d(\alpha)}}.\nonumber
\end{align}
This shows that $\Phi(\alpha)\colon\cG_{d(\alpha)}\ra\cG_{c(\alpha)}$, 
as a densely defined linear operator from $\cL_{d(\alpha)}$ to
$\cL_{c(\alpha)}$,  has the property
\begin{equation}\label{e:omast}
\Phi(\alpha^*)\subseteq \Phi(\alpha)^*,\quad \alpha\in\Gamma.
\end{equation}

Finally, we lift now the operator $\Phi(\alpha)$ to 
the operator $\Phi(\alpha)\colon \cD_{\tau(d(\alpha))}\ra \cD_{\tau(c(\alpha))}$, 
yet denoted by $\Phi(\alpha)$, in the following way: with respect to the decompositions
\begin{equation}\label{e:cekat}
\cD_{\tau(d(\alpha))}=\bigoplus_{t\in S ,\ \tau(t)=\tau(d(\alpha))}\cG_t,\quad
\cD_{\tau(c(\alpha))}=\bigoplus_{t\in S ,\ \tau(t)=\tau(c(\alpha))}\cG_t,
\end{equation}
letting
\begin{equation}\label{e:phia}
\Phi(\alpha)|_{\cG_t}=0,\mbox{ for all }t\in S \setminus\{d(\alpha)\},\ \tau(t)=\tau(d(\alpha)).
\end{equation} 
Then, it follows that \eqref{e:omab} and \eqref{e:omast} hold for the
lifted operators $\Phi(\alpha)$ and $\Phi(\beta)$ as well, hence $\Phi$ is a 
$*$-representation of $\Gamma$ to $\Gamma_{\bK,\bD}$. 

In addition, it is clear from the construction, see \eqref{e:kax}, that,
whenever $\alpha,\beta\in \Gamma$ are such that $c(\alpha)\neq c(\beta)$ but
$\tau(c(\alpha))=\tau(c(\beta))$, we have 
$\Phi(\alpha)\cD_{\tau(d(\alpha))}\perp \Phi(\beta)\cD_{\tau(d(\beta))}$. Hence the
orthogonality property as in Definition~\ref{d:orthogonalu} holds.
\medskip

\textbf{Step 4.} \emph{$T(\alpha) = V_{c(\alpha)}^* \Phi(\alpha) V_{d(\alpha)}$ for all 
$\alpha\in\Gamma$.}\medskip

Indeed, let $\alpha\in\Gamma$ and $h\in \cH_{\tau(d(\alpha))}$ be arbitrary. Then, in view 
of \eqref{e:veseh}, \eqref{e:omaf}, and \eqref{e:vestar}, we have
\begin{align*}
V_{c(\alpha)}^* \Phi(\alpha)V_{d(\alpha)}h &  = V_{c(\alpha)}^* \Phi(\alpha) 
\{T(\gamma^*)h\}_{\gamma\in\Gamma^{d(\alpha)} } \\
& = 
V_{c(\alpha)}^*\{T(\beta^*\alpha)h\}_{\beta\in\Gamma^{c(\alpha)}} =T(\epsilon_{c(\alpha)}^*\alpha)h
= T(\alpha)h.
\end{align*}

\textbf{Step 5.} \emph{The dilation $(\bK;\bD;\Phi;\bV)$ 
is minimal in the sense of Definition~\ref{d:minimalu}.}
\medskip

Let $x\in X$ be arbitrary. In view of the definition of the linear manifold $\cD_x$ as in \eqref{e:dax}
and the definition of the
inner product space $(\cG_s,\langle\cdot,\cdot\rangle_{\cG_s})$ for arbitrary $s\in S $, see \eqref{e:fegabet},
it is sufficient to show that 
\begin{equation}\label{e:lome}
\lin\{\Phi(\alpha)V_{d(\alpha)}\cH_{\tau(d(\alpha))}\mid \alpha\in\Gamma^s\}= \cG_s.\end{equation}

We first observe that, due to the fact that the 
range of $V_s$ is in $\cG_s$ and $\Phi(\alpha)$ maps $\cG_{d(\alpha)}$ to $\cG_{c(\alpha)}$ 
for any $\alpha\in\Gamma^s$, 
the direct inclusion in \eqref{e:lome} follows.

In order to prove the converse inclusion, 
let $f\in \cG_s$ be arbitrary, hence there exists $g\in\cG_{s,0}$ such that $f=\bT_sg$ and then
\begin{align*}
f & =\{\sum_{\beta\in\Gamma^s} T(\gamma^*\beta)g_\beta\}_{\gamma\in\Gamma^s}
=\sum_{\beta\in\Gamma^s} \{T(\gamma^*\beta)g_\beta\}_{\gamma\in\Gamma^s} \\
 & = \sum_{\beta\in\Gamma^s} \Phi(\beta)V_{d(\beta)} g_\beta\in \lin\{\Phi(\alpha)
V_{d(\alpha)}\cH_{\tau(d(\alpha))}\mid \alpha\in\Gamma^s\},
\end{align*}
where, at the last equality, we used \eqref{e:omaf} and \eqref{e:veseh} and observe that, since $g$
has finite support, the sums have only a finite number of nonzero terms. Thus, \eqref{e:lome}
is proven and the assertion follows.\medskip

\textbf{Step 6.} \emph{The orthogonal minimal dilation $(\bK;\bD;\Phi;\bV)$ is unique, up to a 
unitary equivalence.}\medskip

Let $(\bK^\prime;\bD^\prime;\Phi^\prime;\bV^\prime)$ 
be another orthogonal minimal dilation of $T$, hence,
for any $x\in X$, we have
\begin{equation}\label{e:dexp}
\cD_x^\prime= \lin \{\Phi^\prime(\alpha)V^\prime_{\tau(d(\alpha))}\cH_{\tau(d(\alpha))}
\mid \alpha\in\Gamma^s,\mbox{ such that }
s\in \tau^{-1}(\{x\})\},
\end{equation}
and, whenever $\alpha,\beta\in \Gamma$ are such that $c(\alpha)\neq c(\beta)$ but
$\tau(c(\alpha))=\tau(c(\beta))$ we have 
$\Phi^\prime(\alpha)\cK^\prime_{\tau(d(\alpha))}\perp \Phi^\prime(\beta)\cK^\prime_{\tau(d(\beta))}$.

Then, for any $x\in X$, any $s\in S$ such that
$x=\tau(s)$, and
any finitely supported cross-sections $h=\{h_\alpha\}_{\alpha\in\Gamma^s}$, with 
$h_\alpha\in\cH_{\tau(d(\alpha))}$ for all $\alpha\in\Gamma^s$, and 
$k=\{k_\beta\}_{\beta\in\Gamma^s}$, with $k_\beta\in\cH_{\tau(d(\beta))}$ for all $\beta\in\Gamma^s$, 
in view of the property (UD3) we have
\begin{align*}
\sum_{\alpha,\beta\in\Gamma^s} \langle \Phi(\alpha)V_{d(\alpha)} h_\alpha, 
\Phi(\beta)V_{d(\beta)} k_\beta\rangle_{\cK_x} &  = 
\sum_{\alpha,\beta\in\Gamma^s} 
\langle T(\beta^*\alpha)h_\alpha,k_\beta\rangle_{\cH_{\tau(d(\beta))}} \\
& = \sum_{\alpha,\beta\in\Gamma^s} 
\langle \Phi^\prime(\alpha)V_{d(\alpha)}^\prime h_\alpha, 
\Phi^\prime(\beta)V_{d(\beta)}^\prime k_\beta\rangle_{\cK_x^\prime},
\end{align*}
hence, the operator 
\begin{equation}\label{e:uxes}U_{x,s}\colon \lin\{\Phi(\alpha)V_{d(\alpha)}\cH_{\tau(d(\alpha))}\mid 
\alpha\in \Gamma^s\}\ra \lin\{\Phi^\prime(\alpha)V^\prime_{d(\alpha)}\cH_{\tau(d(\alpha))}\mid 
\alpha\in \Gamma^s\}
\end{equation} defined by
\begin{equation}\label{e:udef}
U_{x,s}\biggl(\sum_{\alpha\in\Gamma^s} \Phi(\alpha)V_s h_\alpha\biggr):=
\sum_{\alpha\in\Gamma^s} \Phi^\prime(\alpha)V_s^\prime h_\alpha,
\end{equation}
for any finitely supported cross-section $h=\{h_\alpha\}_{\alpha\in\Gamma^s}$, with 
$h_\alpha\in\cH_{\tau(d(\alpha))}$ for all $\alpha\in\Gamma^s$, is correctly defined and an isometry. 
Note that $U_{x,s}$ defined as in \eqref{e:uxes} 
is bijective and hence it is uniquely extended to a unitary operator, yet denoted by
\begin{equation*}
U_{x,s}\colon \clos\lin\{\Phi(\alpha)V_{d(\alpha)}\cH_{\tau(d(\alpha))}\mid 
\alpha\in \Gamma^s\}\ra \clos\lin\{\Phi^\prime(\alpha)V^\prime_{d(\alpha)}\cH_{\tau(d(\alpha))}\mid 
\alpha\in \Gamma^s\}.
\end{equation*}

We now use the fact that both unitary dilations are orthogonal and minimal and define the 
unitary operator $U_x$ by
\begin{equation}\label{e:udefex}
U_x=\bigoplus_{s\in\tau^{-1}(\{x\})} U_{x,s}\colon \cK_x\ra\cK_x^\prime.
\end{equation}
Then, from \eqref{e:dax}, \eqref{e:lome}, \eqref{e:uxes}, and \eqref{e:dexp}, 
it follows that $U_x\cD_x=\cD^\prime_x$ for all $x\in X$.
From definitions \eqref{e:udefex} and \eqref{e:udef} 
it follows that the bundle $\bU=\{U_x\}_{x\in X}$ intertwines the
$*$-representations $\Phi$ and $\Phi^\prime$, that is,
\begin{equation*}
U_{\tau(c(\beta))} \Phi(\beta)|_{\cD_{\tau(d(\beta))}}
=\Phi^\prime(\beta)U_{\tau(d(\beta))}|_{\cD_{\tau(d(\beta))}},\quad  
\beta\in\Gamma.
\end{equation*}

By Remark~\ref{r:epsu}, for any $s\in S $, the operator
$\Phi(\epsilon_s)$ acts like the identity operator on the subspace
$\lin \{\Phi(\alpha)V_{d(\alpha)}\cH_{\tau(d(\alpha))}\mid \alpha\in\Gamma^s\}$ and, 
similarly, the operator
$\Phi^\prime(\epsilon_s)$ acts like the identity operator on the subspace
$\lin \{\Phi^\prime(\alpha)V^\prime_{d(\alpha)}\cH_{\tau(d(\alpha))}\mid 
\alpha\in\Gamma^s\}$, hence, by the definitions of $U_x$ and $U_{x,s}$, see \eqref{e:udef}
and \eqref{e:udefex}, it follows that,
for all $x\in X$ such that $s\in \tau^{-1}(\{x\})$ and all $h\in\cH_{x}$, we have
\begin{equation*}
U_x V_sh=U_x\Phi(\epsilon_s)V_sh=\Phi^\prime(\epsilon_s)V_s^\prime h=V_s^\prime h.\qedhere
\end{equation*}
\end{proof}

\begin{remark}\label{r:rkhs} The construction of the spaces $\cK_x$ in the proof of Theorem~\ref{t:nagyu},
see \eqref{e:kax}, is
performed in such a way that it is a space of functions on the fibres $\Gamma^s$, for $s\in S$ such that $x=\tau(s)$,
more precisely a reproducing kernel 
Hilbert space on $\Gamma^s$, and not just an abstract completion. To see this, in the following we show that the 
completion of the inner product space $(\cG_s,\langle\cdot,\cdot\rangle_{\cG_s})$ can always be performed
inside of $\cF_s$, and hence is a function space on $\Gamma^s$.

Indeed, recall that, for each $s\in S$,
$\cF_s$ denotes the vector space of all cross-sections
$f=\{f_\gamma\}_{\gamma\in\Gamma^s}$ subject to 
the condition that $f_\gamma\in\cH_{\tau(d(\gamma))}$ for all $\gamma\in\Gamma^s$. On $\cF_s$ we can
consider $\cT_\mathrm{e}$, 
the weakest locally convex topology that makes continuous all operators of point evaluation 
$E_\beta\colon \cF_s\ra \cH_{\tau(d(\beta))}$, for $\beta\in \Gamma^s$, 
where $E_\beta f:=f_\beta$, for all $f=\{f_\gamma\}_{\gamma\in\Gamma^s}\in\cF_s$. Clearly,  $\cT_\mathrm{e}$
is complete. Thus, in order to see that the completion of the inner product space 
$(\cG_s,\langle\cdot,\cdot\rangle_{\cG_s})$ can always be performed inside of $\cF_s$, it is sufficient to observe
that the strong topology on $\cG_s$ induced by the inner product $\langle\cdot,\cdot\rangle_{\cG_s}$ is stronger
than the topology $\cT_\mathrm{e}$.
\end{remark}

\begin{remark}\label{r:cohu} The orthogonal minimal generalised dilation $(\bK;\bD;\Phi;\bV)$ 
constructed during the proof
of the implication (1)$\Ra$(2) of the proof of Theorem~\ref{t:nagyu} has yet another property. Namely,
for any $s\in S $ the operator $\Phi(\epsilon_s)$ is the orthogonal projection onto $\cL_s$, 
the completion of the inner product space $(\cG_s;\langle\cdot,\cdot\rangle_{\cG_s})$ to 
a Hilbert space. In view of \eqref{e:lome} and Remark~\ref{r:epsu}, it follows that
\begin{equation*}
\clos\lin\{\Phi(\alpha)V_{d(\alpha)}\cH_{\tau(d(\alpha))}\mid \alpha\in\Gamma^s\}
=\ran(\Phi(\epsilon_s)).
\end{equation*}
Consequently,
by the definition of the space $\cK_x$, for arbitrary $x\in X$, see \eqref{e:kax}, the
orthogonal projections $\{\Phi(\epsilon_s)\mid s\in \tau^{-1}(\{x\})\}$ are mutually orthogonal and then,
by the minimality property we have
\begin{equation}
\sum_{s\in\tau^{-1}(\{x\})} \Phi(\epsilon_s) =I_{\cK_x},\quad x\in X.\label{e:sesit}
\end{equation}
If $x\in X$ is such that $\tau^{-1}(\{x\})$ is infinite, the meaning of the sum in \eqref{e:sesit} is in the sense of 
summability with respect to the strong operator topology on $\cK_x$.
\end{remark}

In the next corollary we show that the usage of the term dilation in Definition~\ref{d:udilation} is correct in the sense
that, when the map $T$ is unital then the Hilbert space $\cK_x$ is actually an isometric extension 
of the Hilbert space $\cH_x$, for all $x\in X$, and that $T(\alpha)$ is a compression of $\Phi(\alpha)$.

\begin{corollary} \label{c:embedu} With notation and assumptions 
as in Theorem~\ref{t:nagyu}, if $T\in\fP_{\bH,\tau}(\Gamma)$ is unital
in the sense that, for each $x\in X$ the set
$\{T(\epsilon_s)\mid s\in \tau^{-1}(\{x\})\}$ consists in mutually orthogonal projections in $\cH_x$ such 
that
\begin{equation}\label{e:susintau}
\sum_{s\in\tau^{-1}(\{x\})} T(\epsilon_s)=I_{\cH_x},
\end{equation} 
where the meaning of the sum is in the sense of summability with respect to the strong operator topology on $\cH_x$, 
then the generalised dilation 
$(\bK;\bD;\Phi;\bV)$ constructed during the proof of the implication \emph{(1)$\Ra$(2)} 
has the property that, for each 
$x\in X$, the Hilbert space $\cH_x$ is naturally embedded in $\cK_x$ and, with respect to 
this embedding, we have
\begin{equation*}
T(\alpha)=P_{\cH_{\tau(c(\alpha))}} \Phi(\alpha)|_{\cH_{\tau(d(\alpha))}},\quad \alpha\in\Gamma.
\end{equation*}
\end{corollary}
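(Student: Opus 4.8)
The plan is to construct, for every $x\in X$, an isometry $W_x\colon\cH_x\ra\cK_x$ out of the operators $V_s$ and the unitality hypothesis, to use it to identify $\cH_x$ with a closed subspace of $\cK_x$, and then to reduce the asserted compression formula to the identity $T(\alpha)=V_{c(\alpha)}^*\Phi(\alpha)V_{d(\alpha)}$ already established in Step~4 of the proof of Theorem~\ref{t:nagyu}.

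First I would record that, by the computation carried out in Step~2 of that proof, $\langle V_sh,V_sh'\rangle_{\cG_s}=\langle T(\epsilon_s)h,h'\rangle_{\cH_{\tau(s)}}$ for all $s\in S$ and all $h,h'\in\cH_{\tau(s)}$, so $V_s^*V_s=T(\epsilon_s)$; since $T(\epsilon_s)$ is an orthogonal projection by hypothesis, $V_s$ is a partial isometry with $V_sT(\epsilon_s)=V_s$, vanishing on $\ker T(\epsilon_s)$. Fix $x\in X$. Because $\ran V_s\subseteq\cG_s\subseteq\cL_s$ and the subspaces $\{\cL_s\mid s\in\tau^{-1}(\{x\})\}$ are mutually orthogonal in $\cK_x$ (see \eqref{e:kax}), and because $\sum_{s\in\tau^{-1}(\{x\})}\|V_sh\|^2=\sum_{s\in\tau^{-1}(\{x\})}\langle T(\epsilon_s)h,h\rangle=\|h\|^2$ by \eqref{e:susintau}, the sum
\begin{equation*}
W_xh:=\sum_{s\in\tau^{-1}(\{x\})}V_sh
\end{equation*}
converges in $\cK_x$ and defines a linear isometry $W_x\colon\cH_x\ra\cK_x$; equivalently, $W_x^*W_x=\sum_sV_s^*V_s=\sum_sT(\epsilon_s)=I_{\cH_x}$. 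This is the asserted natural embedding of $\cH_x$ into $\cK_x$; we identify $\cH_x$ with $W_x\cH_x$, so that $P_{\cH_x}=W_xW_x^*$ and the asserted formula becomes $T(\alpha)=W_{\tau(c(\alpha))}^*\Phi(\alpha)W_{\tau(d(\alpha))}$.

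To prove this identity, fix $\alpha\in\Gamma$ and write $s=d(\alpha)$, $t=c(\alpha)$, $x=\tau(s)$, $y=\tau(t)$, and let $h\in\cH_x$. Since $\Phi(\alpha)$ annihilates $\cG_u$ for every $u\in\tau^{-1}(\{x\})$ with $u\neq s$ (this is \eqref{e:phia}), while $V_uh\in\cG_u$, only the $u=s$ term of $W_xh$ survives under $\Phi(\alpha)$, so $\Phi(\alpha)W_xh=\Phi(\alpha)V_sh\in\cG_t\subseteq\cL_t$; when $\tau^{-1}(\{x\})$ is finite this is literal (then $W_xh\in\cD_x$), and when it is infinite it is understood via the closure of $\Phi(\alpha)$, the point being that $W_xh-V_sh$ lies in the closed subspace $\cK_x\ominus\cL_s$ on which $\Phi(\alpha)$ restricts to $0$, so that $W_xh$ is a limit of vectors $V_sh+k_n\in\cD_x$ with $k_n\in\lin\{\cG_u\mid u\in\tau^{-1}(\{x\}),\ u\neq s\}$, along which $\Phi(\alpha)$ takes the constant value $\Phi(\alpha)V_sh$. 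On the other hand, $W_y^*$ agrees with $V_t^*$ on $\cL_t$, since $V_v^*$ vanishes on $\cK_y\ominus\cL_v\supseteq\cL_t$ whenever $v\in\tau^{-1}(\{y\})$, $v\neq t$, by \eqref{e:vestar} and the remark following it. Hence, using Step~4 of the proof of Theorem~\ref{t:nagyu},
\begin{equation*}
W_y^*\Phi(\alpha)W_xh=W_y^*\Phi(\alpha)V_sh=V_t^*\Phi(\alpha)V_sh=V_{c(\alpha)}^*\Phi(\alpha)V_{d(\alpha)}h=T(\alpha)h,
\end{equation*}
which is the desired identity. The routine ingredients are the convergence of the defining sums and the elementary adjoint identities for the $V_s$; the one point requiring care is the interpretation of $\Phi(\alpha)W_xh$ when the fibre $\tau^{-1}(\{x\})$ is infinite, which is precisely the obstacle handled by the closure argument indicated above.
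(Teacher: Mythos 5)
Your proof is correct and follows essentially the same route as the paper: establish $V_s^*V_s=T(\epsilon_s)$, assemble the partial isometries into the isometry $W_x=\sum_{s\in\tau^{-1}(\{x\})}V_s$ using the unitality hypothesis and the mutual orthogonality of the subspaces $\cL_s$, and reduce the compression formula to the Step~4 identity $T(\alpha)=V_{c(\alpha)}^*\Phi(\alpha)V_{d(\alpha)}$. Your only addition is the explicit closure argument justifying $\Phi(\alpha)W_xh=\Phi(\alpha)V_{d(\alpha)}h$ when $\tau^{-1}(\{x\})$ is infinite, a domain point the paper passes over silently; it is a welcome clarification but not a different method.
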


\begin{proof}
We observed in Remark~\ref{r:cohu}, that, for any $s\in S $ 
we have $\Phi(\epsilon_s)=P_{\cL_s}$. Note that, by construction of the operators $V_s$, see
\eqref{e:veseh}, we have $\ran(V_s)\subseteq \cL_s$ hence,
\begin{equation*}V_s^*V_s = V_s^* P_{\cL_s} V_s= V_s^*\Phi(\epsilon_s)V_s= 
T(\epsilon_s),
\end{equation*}
hence $V_s$ is a partial isometry with initial space $T(\epsilon_s)$. 
Then, on the one hand, we observe that
for arbitrary $x\in X$ the set $\{V_s\mid s\in\tau^{-1}(\{x\})\}$ 
consists in partial isometries on $\cH_x$ with mutually orthogonal initial spaces and then, 
by assumption \eqref{e:susintau}, these initial spaces sum up 
to $\cH_x$. On the other hand, since $\ran(V_s)\subseteq \cL_s$ and taking into account \eqref{e:kax}, it follows
that the final spaces of $V_s$, $s\in\tau^{-1}(\{x\})$, are mutually orthogonal.
From here it follows that the operator
\begin{equation*}
W_x=\sum_{s\in\tau^{-1}(\{x\})} V_s\colon \cH_x\ra\cK_x
\end{equation*}
exists, in the sense of summability with respect to the strong operator topology, if $\tau^{-1}(\{x\})$ is infinite,
and is an isometry.
Identifying $W_x\cH_x$ with $\cH_x$  for any $x\in X$,
we have
\begin{equation*}
T(\alpha)=V_{c(\alpha)}^* \Phi(\alpha)V_{d(\alpha)}= W_{\tau(c(\alpha))}^* \Phi(\alpha)
W_{\tau(d(\alpha))}=
P_{\cH_{\tau(c(\alpha))}} \Phi(\alpha)|_{\cH_{\tau(d(\alpha))}},\quad \alpha\in\Gamma.\qedhere
\end{equation*}
\end{proof}

\section{Dilations with Bounded Operators}\label{s:bd}

We continue with notation as in the previous section and consider a map $T\in\fH_{\bH,\tau}(\Gamma)$ as in
Definition~\ref{d:hm}. In this section we want to get characterisations of those maps $T$ that admit dilations
with bounded operators.

\begin{definition}\label{d:dilation} Given $T\in\fH_{\bH,\tau}(\Gamma)$,
a triple $(\bK;\Phi;\bV)$ is called a \emph{dilation} of $T$ if it satisfies the following conditions.
\begin{itemize}
\item[(D1)] $\bK=\{\cK_x\}_{x\in X}$ is a bundle of Hilbert spaces.
\item[(D2)] $\bV=\{V_s\}_{s\in S }$ is a bundle of operators, where
$V_s\in\cB(\cH_{\tau(s)},\cK_{\tau(s)})$ for all $s\in S $.
\item[(D3)] The pair $(\tau;\Phi)$ is a $*$-representation of $\Gamma$ on $\bK$ such that
\begin{equation*}
T(\alpha) = V_{c(\alpha)}^* \Phi(\alpha) V_{d(\alpha)},\quad \alpha\in\Gamma.
\end{equation*}
\end{itemize}
\end{definition}

An analogue of Remark~\ref{r:epsu} holds as well.

The concept of a minimal dilation can be obtained from
Definition~\ref{d:minimalu} and can be formalised as follows.

\begin{definition}\label{d:minbd} Given $T\in\fH_{\bH,\tau}(\Gamma)$, a dilation $(\bK;\Phi;\bV)$ of $T$ is
\emph{minimal} if
\begin{equation*}
\cK_x=\clos\lin \{\Phi(\alpha)V_{d(\alpha)}\cH_{\tau(d(\alpha))}\mid \alpha\in\Gamma\mbox{ such 
that }\tau(c(\alpha))=x\},\quad x\in X.
\end{equation*}
\end{definition}

The concept of unitarily equivalent dilations is derived from Definition~\ref{d:ueu}. 

\begin{definition} \label{d:ue} Two dilations 
$(\bK;\Phi;\bV)$ and $(\bK^\prime;\Phi^\prime;\bV^\prime)$
of the map $T\in\fP_{\bH,\tau}(\Gamma)$ are called \emph{unitarily equivalent} if
there exists a bundle of operators $\bU=\{U_x\}_{x\in X}$ subject to the following conditions.
\begin{itemize}
\item[(UE1)] For each $x\in X$, $U_x\colon \cK_x\ra\cK^\prime_x$ is unitary.
\item[(UE2)] The bundle $\bU$ intertwines the representations $\Phi$ and $\Phi^\prime$ in the sense
that
\begin{equation*}
U_{\tau(c(\alpha))} \Phi(\alpha) = \Phi^\prime(\alpha)U_{\tau(d(\alpha))},\quad \alpha\in\Gamma.
\end{equation*}
\item[(UE3)] The bundle $\bU$ maps the bundle $\bV$ to the bundle $\bV^\prime$ in the sense that 
$U_{\tau(s)}V_s=V^\prime_s$ for all $s\in S $.
\end{itemize}
\end{definition}

The concept of orthogonal dilation is derived from that in Definition~\ref{d:orthogonalu}.

\begin{definition}\label{d:orthogonal} A dilation $(\bK;\Phi;\bV)$ of the map $T$ is called 
\emph{orthogonal} if the representation $(\tau;\Phi)$ is orthogonal, see Definition~\ref{d:serep}, that is,
 whenever $\alpha,\beta\in \Gamma$ are such that $c(\alpha)\neq c(\beta)$ but
$\tau(c(\alpha))=\tau(c(\beta))$ we have 
$\Phi(\alpha)\cK_{\tau(d(\alpha))}\perp \Phi(\beta)\cK_{\tau(d(\beta))}$.
\end{definition}

In order to connect the concept of dilation with that of generalised dilation we restrict the discussion to orthogonal 
dilations, due to technical obstructions as seen during the previous section. 

\begin{remark}\label{r:bd}
A dilation $(\bK;\Phi;\bV)$ of the map $T$ should be viewed as a special type of
generalised dilation $(\bK;\bD;\Phi;\bV)$ but, in view of
Definition~\ref{d:minimalu}, we cannot simply take $\cD_x=\cK_x$. We assume in addition that the dilation
$(\bK;\Phi;\bV)$ is orthogonal.
Firstly, for arbitrary $x\in X$, recall the notation,
see \eqref{e:pede},
\begin{equation*}\label{e:ex}
\cE_x:= \lin \{\Phi(\alpha)V_{d(\alpha)}\cH_{\tau(d(\alpha))}\mid \alpha\in\Gamma\mbox{ such 
that }\tau(c(\alpha))=x\},
\end{equation*}
which is a linear manifold in $\cK_x$ that may be neither closed nor dense. Then
we take 
\begin{equation*}
\cD_x= \cE_x \oplus \cE_x^\perp,
\end{equation*}
which may not be closed but it is dense in $\cK_x$. We show that
$(\tau;\Phi)$ is a $*$-representation of $\Gamma$ in 
the sense of Definition~\ref{d:urep}, on the pair $(\bD;\bK)$, where $\bD=\{\cD_x\}_{x\in X}$.

Indeed,  inspecting the Definition~\ref{d:urep}, the only thing that we have to prove is that for any $\beta\in\Gamma$
we have $\Phi(\beta)\cD_{\tau(d(\beta))}\subseteq \cD_{\tau(c(\beta))}$. To see this, we observe that due to the 
orthogonality assumption, 
$\Phi(\beta)\cE_{\tau(d(\beta))}\subseteq \cE_{\tau(c(\beta))}$ and $\Phi(\beta) \cE_{\tau(d(\beta))}^\perp\subseteq
\cE_{\tau(c(\beta))}^\perp$, see \eqref{e:pebe}.
Thus, the quadruple $(\bK;\bD;\Phi;\bV)$ is a generalised dilation as in Definition~\ref{d:udilation}. 
\end{remark}

In the context of dilations with bounded operators, as in Definition~\ref{d:dilation}, and comparing with generalised 
dilations that involve unbounded operators, the possibility of extracting a minimal dilation has a better answer, when 
compared with that provided in Proposition~\ref{p:mingendil}.

\begin{proposition}\label{p:mindil}
Let $(\bK;\Phi;\bV)$, with $\bK=\{\cK_x\}_{x\in X}$, be an
orthogonal dilation of the map $T\in\fP_{\bH,\tau}(\Gamma)$. 
For arbitrary $x\in X$
let
\begin{equation}\label{e:pedes}
\cE_x:= \lin \{\Phi(\alpha)V_{d(\alpha)}\cH_{\tau(d(\alpha))}\mid \alpha\in\Gamma\mbox{ such 
that }\tau(c(\alpha))=x\},
\end{equation}
provided that the set $\{\alpha\in \Gamma \mid \tau(c(\alpha))=x\}$ is not empty, and let $\cE_x$ be the null space, in 
the opposite case, and let $\cG_x:=\clos \cE_x$. Then, letting $\bG=\{\cG_x\}_{x\in X}$,
$\widetilde \Phi(\alpha)=P_{\cG_{\tau(c(\alpha))}}\Phi|_{\cE_{\tau(d(\alpha))}}$ for all $\alpha\in \Gamma$, and
$\widetilde V_s=P_{\cG_{\tau(s)}}V_s$ for all $s\in S$,
the triple $(\bG;\widetilde\Phi;\widetilde\bV)$ is a minimal orthogonal dilation of the pair $(\tau;T)$.
\end{proposition}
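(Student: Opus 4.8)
The plan is to follow the proof of Proposition~\ref{p:mingendil} almost verbatim, the crucial simplification being that in the bounded setting the technical hypothesis \eqref{e:peale} is automatic: the dilation consists of honest Hilbert spaces $\cK_x$ and everywhere defined bounded operators, so the orthogonal projection $P_{\cG_{\tau(s)}}$ onto the closed subspace $\cG_{\tau(s)}\subseteq\cK_{\tau(s)}$ raises no domain issue. This is precisely the improvement announced in the remark preceding the statement, and it is what makes the extraction of a minimal dilation unconditional here.

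First I would establish the invariance of the bundle $\bG=\{\cG_x\}_{x\in X}$. Arguing as in the derivation of \eqref{e:pebe} in the proof of Proposition~\ref{p:mingendil} (the bounded case only makes matters easier), orthogonality of $(\tau;\Phi)$ together with Lemma~\ref{l:orth} gives $\Phi(\beta)\cE_{\tau(d(\beta))}\subseteq\cE_{\tau(c(\beta))}$ and $\Phi(\beta)\cE_{\tau(d(\beta))}^\perp\subseteq\cE_{\tau(c(\beta))}^\perp$ for every $\beta\in\Gamma$; since $\Phi(\beta)$ is bounded and $\cG_x=\clos\cE_x$, passing to closures yields $\Phi(\beta)\cG_{\tau(d(\beta))}\subseteq\cG_{\tau(c(\beta))}$ and $\Phi(\beta)\cG_{\tau(d(\beta))}^\perp\subseteq\cG_{\tau(c(\beta))}^\perp$. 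Consequently $\widetilde\Phi(\alpha)=P_{\cG_{\tau(c(\alpha))}}\Phi(\alpha)|_{\cE_{\tau(d(\alpha))}}$, being the restriction of a bounded operator to a dense subspace of $\cG_{\tau(d(\alpha))}$, extends by continuity to a bounded operator $\cG_{\tau(d(\alpha))}\to\cG_{\tau(c(\alpha))}$, which is just the corestriction of $\Phi(\alpha)|_{\cG_{\tau(d(\alpha))}}$. Multiplicativity $\widetilde\Phi(\alpha\beta)=\widetilde\Phi(\alpha)\widetilde\Phi(\beta)$ is then inherited from $\Phi(\alpha\beta)=\Phi(\alpha)\Phi(\beta)$, and since $\bG$ is invariant under both $\Phi(\alpha)$ and $\Phi(\alpha^*)=\Phi(\alpha)^*$ one computes, for $h\in\cG_{\tau(d(\alpha))}$ and $k\in\cG_{\tau(c(\alpha))}$, that $\langle\widetilde\Phi(\alpha^*)k,h\rangle=\langle\Phi(\alpha)^*k,h\rangle=\langle k,\widetilde\Phi(\alpha)h\rangle$, so $\widetilde\Phi(\alpha^*)=\widetilde\Phi(\alpha)^*$; hence $(\tau;\widetilde\Phi)$ is a $*$-representation of $\Gamma$ on $\bG$, and it is orthogonal because $\widetilde\Phi(\alpha)\cG_{\tau(d(\alpha))}\subseteq\Phi(\alpha)\cK_{\tau(d(\alpha))}$, which is orthogonal to $\Phi(\beta)\cK_{\tau(d(\beta))}\supseteq\widetilde\Phi(\beta)\cG_{\tau(d(\beta))}$ whenever $c(\alpha)\neq c(\beta)$ and $\tau(c(\alpha))=\tau(c(\beta))$.

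Next I would verify the factorisation and minimality with $\widetilde V_s=P_{\cG_{\tau(s)}}V_s\in\cB(\cH_{\tau(s)},\cG_{\tau(s)})$. For $h\in\cH_{\tau(d(\alpha))}$, split $V_{d(\alpha)}h$ along $\cG_{\tau(d(\alpha))}\oplus\cG_{\tau(d(\alpha))}^\perp$; applying $\Phi(\alpha)$, the $\cG^\perp$-part lands in $\cG_{\tau(c(\alpha))}^\perp$ by invariance and is annihilated by $P_{\cG_{\tau(c(\alpha))}}$, so $\widetilde\Phi(\alpha)\widetilde V_{d(\alpha)}h=P_{\cG_{\tau(c(\alpha))}}\Phi(\alpha)V_{d(\alpha)}h=\Phi(\alpha)V_{d(\alpha)}h$, the last equality since $\Phi(\alpha)V_{d(\alpha)}\cH_{\tau(d(\alpha))}\subseteq\cE_{\tau(c(\alpha))}\subseteq\cG_{\tau(c(\alpha))}$ (take $\gamma=\alpha$ in \eqref{e:pedes}). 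As $\widetilde V_{c(\alpha)}^*$ is the restriction of $V_{c(\alpha)}^*$ to $\cG_{\tau(c(\alpha))}$, this gives $\widetilde V_{c(\alpha)}^*\widetilde\Phi(\alpha)\widetilde V_{d(\alpha)}h=V_{c(\alpha)}^*\Phi(\alpha)V_{d(\alpha)}h=T(\alpha)h$ by (D3) for $(\bK;\Phi;\bV)$. The same identity $\widetilde\Phi(\alpha)\widetilde V_{d(\alpha)}=\Phi(\alpha)V_{d(\alpha)}$ shows $\lin\{\widetilde\Phi(\alpha)\widetilde V_{d(\alpha)}\cH_{\tau(d(\alpha))}\mid\tau(c(\alpha))=x\}=\cE_x$, whose closure is $\cG_x$, which is exactly the minimality of Definition~\ref{d:minbd}; hence $(\bG;\widetilde\Phi;\widetilde\bV)$ is a minimal orthogonal dilation of $(\tau;T)$.

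The whole argument is essentially bookkeeping; the one point that requires care is the invariance of both $\bG$ and $\bG^\perp$ under $\Phi$, which is taken over from the proof of Proposition~\ref{p:mingendil} and rests on the orthogonality assumption via Lemma~\ref{l:orth}. No genuine obstacle arises, and in particular the hypothesis \eqref{e:peale} of that proposition is automatically fulfilled in the bounded situation.
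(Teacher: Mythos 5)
Your proof is correct and follows essentially the same route as the paper: the key facts are the invariance relations $\Phi(\beta)\cG_{\tau(d(\beta))}\subseteq\cG_{\tau(c(\beta))}$ and $\Phi(\beta)\cG_{\tau(d(\beta))}^\perp\subseteq\cG_{\tau(c(\beta))}^\perp$, obtained from orthogonality via Lemma~\ref{l:orth} exactly as in \eqref{e:pebe}, after which the compression argument, the factorisation through $T$, and minimality are the same bookkeeping as in the paper's proof (which likewise notes that \eqref{e:peale} is not needed in the bounded case).
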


\begin{proof} 
We observe that, for any $\beta\in \Gamma$ we have that 
\begin{equation}\label{e:pebeb}
\Phi(\beta)\cG_{\tau(d(\beta))}\subseteq \cG_{\tau(c(\beta))}\mbox{ and }
\Phi(\beta) \cG_{\tau(d(\beta))}^\perp \subseteq  \cG_{\tau(c(\beta))}^\perp.\end{equation} 
These follow as in the proof of \eqref{e:pebe} by using orthogonality.

These show that, on the one hand, letting 
$\widetilde \Phi(\alpha)=P_{\cG_{\tau(c(\alpha))}}\Phi|_{\cG_{\tau(d(\alpha))}}$ for all $\alpha\in \Gamma$, the pair 
$(\tau;\widetilde \Phi)$ is a $*$-representation of $\Gamma$ on the pair of bundle $\bG$. On the other hand, 
letting $\widetilde V_s=P_{\cG_s}V_s$ for all $s\in S$, these show that,
for any $\alpha\in\Gamma$, $h\in\cH_{\tau(d(\alpha))}$, and $k\in\cH_{\tau(c(\alpha))}$, we have
\begin{align*}
\langle T(\alpha)h,k\rangle_{\cH_{\tau(c(\alpha))}} & =
\langle  \Phi(\alpha) V_{d(\alpha)}h,V_{c(\alpha)}k\rangle_{\cK_{\tau(c(\alpha))}} \\
& = \langle P_{\cG_{\tau(c(\alpha))}} \Phi(\alpha)V_{d(\alpha)}h,V_{c(\alpha)}k\rangle_{\cK_{\tau(c(\alpha))}} \\
& = \langle P_{\cG_{\tau(c(\alpha))}} \Phi(\alpha)\bigl(P_{\cG_{\tau(d(\alpha))}} 
+ P_{\cG_{\tau(d(\alpha))}^\perp}\bigr)V_{d(\alpha)}h, V_{c(\alpha)}k\rangle_{\cK_{\tau(c(\alpha))}} \\
\intertext{which, in view of  \eqref{e:pebeb}, equals}
& = \langle P_{\cG_{\tau(c(\alpha))}} \Phi(\alpha) P_{\cG_{\tau(d(\alpha))}} 
V_{d(\alpha)}h, P_{\cG_{\tau(c(\alpha))}}V_{c(\alpha)}k\rangle_{\cK_{\tau(c(\alpha))}} \\
& = \langle \widetilde \Phi(\alpha)\widetilde V_{d(\alpha)}h,\widetilde V_{c(\alpha)}k\rangle_{\cK_{\tau(c(\alpha))}}.
\end{align*}
Also, for any $x\in X$, a similar reasoning as before shows that
\begin{align*} 
\cG_x & = \clos \lin \{\Phi(\alpha)V_{d(\alpha)}\cH_{\tau(d(\alpha))}\mid \alpha\in\Gamma,\
c(\alpha)\in\tau^{-1}(\{x\}\}\\
& = \clos\lin \{\widetilde\Phi(\alpha)\widetilde V_{d(\alpha)}\cH_{\tau(d(\alpha))}\mid \alpha\in\Gamma,\ 
c(\alpha)\in\tau^{-1}(\{x\}\},
\end{align*}
hence the dilation $(\bG;\widetilde\Phi;\widetilde\bV)$ is minimal.
\end{proof}

The main theorem of this section says that, in general, in order for a map $T\in \fH_{\bH,\tau}(\Gamma)$
to admit a dilation, in addition to the condition of being positive semidefinite,
it should satisfy one more condition of boundedness type, that is rather natural, 
see \cite{BSzNagy} and \cite{GheondeaUgurcan}.

\begin{theorem}\label{t:nagy}
With notation as before, let $T\in\fH_{\bH,\tau}(\Gamma)$. The following assertions are equivalent.
\begin{itemize}
\item[(1)] $T$ satisfies the following conditions.
  \begin{itemize}
  \item[(a)] $T$ is positive semidefinite, in the sense of Definition~\ref{d:psm}.   
  \item[(b)] For any $\alpha\in\Gamma$ there exists $C(\alpha)\geq 0$ such that for any 
finitely supported cross-section $h=\{h_\gamma\}_{\gamma\in\Gamma^{d(\alpha)}}$, where 
$h_\gamma\in \cH_{\tau(d(\gamma))}$ for all $\gamma\in\Gamma^{d(\alpha)}$, we have
 \begin{equation*}
  \sum_{\beta,\gamma\in\Gamma^{d(\alpha)}} \langle 
  T(\beta^*\alpha^*\alpha\gamma)h_\gamma,h_\beta\rangle_{\cH_{\tau(d(\beta))}} \leq
  C(\alpha)\sum_{\beta,\gamma\in\Gamma^{d(\alpha)}} \langle
   T(\beta^*\gamma)h_\gamma,h_\beta\rangle_{\cH_{\tau(d(\beta))}}.
  \end{equation*}
  \end{itemize}
\item[(2)] $T$ has a dilation $(\bK;\Phi;\bV)$.
\end{itemize}

In addition, if assertion (1) holds then a dilation $(\bK;\Phi;\bV)$ of $(\tau;T)$
can always be obtained to be orthogonal, in the sense of Definition~\ref{d:orthogonal}, and
minimal in the sense of Definition~\ref{d:minimalu} and, in this case, it
is unique up to a unitary equivalence, in the sense of Definition~\ref{d:ue}.
\end{theorem}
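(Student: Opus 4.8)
The plan is to read condition (b) as exactly the quantitative assertion that the (a priori unbounded) operators of the minimal orthogonal dilation furnished by Theorem~\ref{t:nagyu} are in fact bounded, so that both directions reduce to that theorem plus one computation.

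For the implication $(2)\Ra(1)$, I would argue directly from a dilation $(\bK;\Phi;\bV)$. Property (a) is obtained exactly as in Proposition~\ref{p:psmu}: for fixed $s\in S$ and a finitely supported cross-section $h=\{h_\alpha\}_{\alpha\in\Gamma^s}$, writing $T(\beta^*\alpha)=V_{d(\beta)}^*\Phi(\beta^*\alpha)V_{d(\alpha)}$ and $\Phi(\beta^*\alpha)=\Phi(\beta)^*\Phi(\alpha)$ gives $\sum_{\alpha,\beta\in\Gamma^s}\langle T(\beta^*\alpha)h_\alpha,h_\beta\rangle_{\cH_{\tau(d(\beta))}}=\bigl\|\sum_{\alpha\in\Gamma^s}\Phi(\alpha)V_{d(\alpha)}h_\alpha\bigr\|_{\cK_{\tau(s)}}^2\ge 0$. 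For (b), fix $\alpha\in\Gamma$ and a finitely supported $h=\{h_\gamma\}_{\gamma\in\Gamma^{d(\alpha)}}$; using $\beta^*\alpha^*\alpha\gamma=(\alpha\beta)^*(\alpha\gamma)$, $\Phi((\alpha\beta)^*(\alpha\gamma))=\Phi(\alpha\beta)^*\Phi(\alpha\gamma)$ and $\Phi(\alpha\gamma)=\Phi(\alpha)\Phi(\gamma)$, one gets $\sum_{\beta,\gamma\in\Gamma^{d(\alpha)}}\langle T(\beta^*\alpha^*\alpha\gamma)h_\gamma,h_\beta\rangle_{\cH_{\tau(d(\beta))}}=\|\Phi(\alpha)\xi\|^2$, where $\xi=\sum_{\gamma\in\Gamma^{d(\alpha)}}\Phi(\gamma)V_{d(\gamma)}h_\gamma\in\cK_{\tau(d(\alpha))}$, while the same bookkeeping with $\alpha$ deleted shows $\|\xi\|^2=\sum_{\beta,\gamma\in\Gamma^{d(\alpha)}}\langle T(\beta^*\gamma)h_\gamma,h_\beta\rangle_{\cH_{\tau(d(\beta))}}$. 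Hence (b) holds with $C(\alpha)=\|\Phi(\alpha)\|^2$.

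For the implication $(1)\Ra(2)$, I would first apply Theorem~\ref{t:nagyu} to (a) to obtain the orthogonal minimal generalised dilation $(\bK;\bD;\Phi;\bV)$ built in its proof, with $\cK_x=\bigoplus_{s\in\tau^{-1}(\{x\})}\cL_s$ and $\cG_s\subseteq\cL_s$ as in \eqref{e:kax}, the operator $\Phi(\alpha)$ acting on $\cG_{d(\alpha)}$ by $\Phi(\alpha)f=\{f_{\alpha^*\gamma}\}_{\gamma\in\Gamma^{c(\alpha)}}$ and vanishing on the remaining summands $\cG_t$, $t\ne d(\alpha)$, $\tau(t)=\tau(d(\alpha))$, see \eqref{e:omaf}--\eqref{e:phia}. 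The heart of the matter is to show every such $\Phi(\alpha)$ is bounded. Given $f\in\cG_{d(\alpha)}$, write $f=\bT_{d(\alpha)}g$ with $g\in\cG_{d(\alpha),0}$; then by \eqref{e:gegepa} and \eqref{e:fefep} one has $\|f\|_{\cG_{d(\alpha)}}^2=\sum_{\beta,\gamma\in\Gamma^{d(\alpha)}}\langle T(\beta^*\gamma)g_\gamma,g_\beta\rangle_{\cH_{\tau(d(\beta))}}$, and by \eqref{e:ega}--\eqref{e:sulam} one has $\Phi(\alpha)f=\bT_{c(\alpha)}e$ with $e_\lambda=\sum_{\beta\in\Gamma^{d(\alpha)},\,\alpha\beta=\lambda}g_\beta$. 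Expanding $\|\Phi(\alpha)f\|_{\cG_{c(\alpha)}}^2=\langle e,e\rangle_{\bT_{c(\alpha)}}$ and regrouping the double sum over $\lambda,\mu$ in $\{\alpha\gamma\mid\gamma\in\Gamma^{d(\alpha)}\}$ according to the pair $(\beta,\gamma)\in\Gamma^{d(\alpha)}\times\Gamma^{d(\alpha)}$ with $\mu=\alpha\beta$, $\lambda=\alpha\gamma$, $\lambda^*\mu=\gamma^*\alpha^*\alpha\beta$ (each pair contributing exactly once), yields after relabelling $\|\Phi(\alpha)f\|_{\cG_{c(\alpha)}}^2=\sum_{\beta,\gamma\in\Gamma^{d(\alpha)}}\langle T(\beta^*\alpha^*\alpha\gamma)g_\gamma,g_\beta\rangle_{\cH_{\tau(d(\beta))}}$. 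Condition (b), applied with $h=g$, then reads precisely $\|\Phi(\alpha)f\|^2\le C(\alpha)\|f\|^2$, so $\Phi(\alpha)$ extends to a bounded operator $\cK_{\tau(d(\alpha))}\ra\cK_{\tau(c(\alpha))}$ of norm at most $\sqrt{C(\alpha)}$. I expect the only genuinely delicate point to be this combinatorial regrouping, since $\gamma\mapsto\alpha\gamma$ need not be injective; everything else is routine.

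Finally, since the bounded extensions agree with the original operators on the dense subspaces $\cD_x$, the relations \eqref{e:omab}, $\Phi(\alpha^*)=\Phi(\alpha)^*$ (from \eqref{e:omast} together with the fact that a bounded everywhere-defined operator contained in $\Phi(\alpha)^*$ equals it), and $T(\alpha)=V_{c(\alpha)}^*\Phi(\alpha)V_{d(\alpha)}$ pass to the extensions by continuity; thus $(\bK;\Phi;\bV)$ is a dilation in the sense of Definition~\ref{d:dilation}, orthogonal because $(\tau;\Phi)$ is, and minimal (Definition~\ref{d:minbd}) because $\cK_x=\clos\cD_x=\clos\lin\{\Phi(\alpha)V_{d(\alpha)}\cH_{\tau(d(\alpha))}\mid\tau(c(\alpha))=x\}$ by Step~5 of the proof of Theorem~\ref{t:nagyu}. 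For uniqueness up to unitary equivalence, I would observe that a minimal orthogonal bounded dilation is in particular a minimal orthogonal generalised dilation (take $\cD_x$ to be the dense span $\cE_x$ of \eqref{e:pedes}, so $\cE_x^\perp=\{0\}$, see Remark~\ref{r:bd}), whence the uniqueness clause of Theorem~\ref{t:nagyu} applies and yields a bundle of unitaries intertwining the data; alternatively one reproduces Step~6 of the proof of Theorem~\ref{t:nagyu} verbatim, which in the bounded setting is only simpler.
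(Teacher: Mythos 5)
Your proposal is correct and follows essentially the same route as the paper: both implications reduce to Theorem~\ref{t:nagyu}, with condition (b) identified as precisely the boundedness $\|\Phi(\alpha)f\|^2\leq C(\alpha)\|f\|^2$ of the operators of the minimal orthogonal generalised dilation (and, conversely, (b) recovered with $C(\alpha)=\|\Phi(\alpha)\|^2$), the lifting, orthogonality, minimality and uniqueness statements being inherited from Steps 3, 5 and 6 of that proof. The only cosmetic difference is that you obtain the identity $\|\Phi(\alpha)f\|^2_{\cG_{c(\alpha)}}=\sum_{\beta,\gamma\in\Gamma^{d(\alpha)}}\langle T(\beta^*\alpha^*\alpha\gamma)g_\gamma,g_\beta\rangle_{\cH_{\tau(d(\beta))}}$ by expanding $\langle e,e\rangle_{\bT_{c(\alpha)}}$ and re-indexing (which is indeed legitimate, since each pair $(\beta,\gamma)$ corresponds to exactly one pair $(\alpha\beta,\alpha\gamma)$, so the feared non-injectivity of $\gamma\mapsto\alpha\gamma$ causes no double counting), whereas the paper derives the same identity from $\langle\Phi(\alpha)f,\Phi(\alpha)f\rangle=\langle\Phi(\alpha^*\alpha)f,f\rangle$ using \eqref{e:omab} and \eqref{e:omast}.
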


\begin{proof} $(1)\Ra(2)$. According to the proof of the implication $(1)\Ra(2)$ of
 Theorem~\ref{t:nagyu} we only have to prove that, 
 for any $\alpha\in\Gamma$, the operator $\Phi(\alpha)$ 
defined at \eqref{e:omaf} is bounded and hence it has a unique extension 
to a bounded linear operator, denoted yet by $\Phi(\alpha)\in\cB(\cL_{d(\alpha)},\cL_{c(\alpha)})$.
Indeed, for a fixed $\alpha\in \Gamma$, let $f\in\cG_{d(\alpha)}$ be arbitrary and 
let $g\in\cG_{d(\alpha),0}$ be such that $f=\bT_{d(\alpha)}g$. Then, in view of \eqref{e:omab},
\eqref{e:omast},  and the condition (b), we have
\begin{align}
\langle \Phi(\alpha)f,\Phi(\alpha)f\rangle_{\cG_{c(\alpha)}} & =
 \langle \Phi(\alpha^*)\Phi(\alpha)f,f\rangle_{\cG_{d(\alpha)}} = 
\langle \Phi(\alpha^*\alpha) f,f\rangle_{\cG_{d(\alpha)}} \nonumber \\
& = \sum_{\beta,\gamma\in\Gamma^{d(\alpha)}} \langle 
  T(\beta^*\alpha^*\alpha\gamma)g_\gamma,g_\beta\rangle_{\cH_{\tau(d(\beta))}} \nonumber\\
  & \leq
  C(\alpha)\sum_{\beta,\gamma\in\Gamma^{d(\alpha)}} \langle
   T(\beta^*\gamma)g_\gamma,g_\beta\rangle_{\cH_{\tau(d(\beta))}}\label{e:calfa} \\
   & = C(\alpha) \langle f,f\rangle_{\cG_{d(\alpha)}},\nonumber
\end{align}
which proves the claim. 

Finally, we lift the operator $\Phi(\alpha)$ to a bounded linear operator from $\cK_{\tau(d(\alpha))}$
to $\cK_{\tau(c(\alpha))}$, yet denoted by $\Phi(\alpha)$, in the following way: with respect to the decompositions
\begin{equation}\label{e:cekata}
\cK_{\tau(d(\alpha))}=\bigoplus_{t\in S ,\ \tau(t)=\tau(d(\alpha))}\cL_t,\quad
\cK_{\tau(c(\alpha))}=\bigoplus_{t\in S ,\ \tau(t)=\tau(c(\alpha))}\cL_t,
\end{equation}
letting
\begin{equation}
\Phi(\alpha)|_{\cL_t}=0,\mbox{ for all }t\in S \setminus\{d(\alpha)\},\ \tau(t)=\tau(d(\alpha)).
\end{equation} 
Then, it follows that \eqref{e:omab} and \eqref{e:omast} hold for the
bounded operators $\Phi(\alpha)$ and $\Phi(\beta)$ as well, hence $\Phi$ is a 
$*$-representation of $\Gamma$ to $\Gamma_\bK$.

$(2)\Ra(1)$. Let $(\bK;\Phi;\bV)$ be a dilation of $(\tau;T)$. It follows from Proposition~\ref{p:psmu}
that $T$ is positive semidefinite.
Also, for any $\alpha\in \Gamma$ and
   for any 
  finitely supported cross-section $h=\{h_\gamma\}_{\gamma\in\Gamma^{d(\alpha)}}$, where 
  $h_\gamma\in \cH_{\tau(d(\gamma))}$ for all $\gamma\in\Gamma^{d(\alpha)}$, we have
  \begin{align*}
  \sum_{\beta,\gamma\in\Gamma^{d(\alpha)}} \langle 
  T(\beta^*\alpha^*\alpha\gamma)h_\gamma,h_\beta\rangle_{\cH_{\tau(d(\beta))}} & =
  \sum_{\beta,\gamma\in\Gamma^{d(\alpha)}} \langle 
  V_{d(\beta)}^*\Phi(\beta^*\alpha^*\alpha\gamma)V_{d(\gamma)}
h_\gamma,h_\beta\rangle_{\cH_{\tau(d(\beta))}} \\
& =
  \sum_{\beta,\gamma\in\Gamma^{d(\alpha)}} \langle 
  \Phi(\alpha)\Phi(\gamma)V_{d(\gamma)}
h_\gamma,\Phi(\alpha)\Phi(\beta)V_{d(\beta)}h_\beta\rangle_{\cK_{\tau(c(\alpha))}} \\
& = \bigl\langle \Phi(\alpha)\!\!\!\!
\sum_{\gamma\in\Gamma^{d(\alpha)}}\!\!\!\! 
\Phi(\gamma)V_{d(\gamma)}h_\gamma,
\Phi(\alpha)\!\!\!\sum_{\beta\in\Gamma^{d(\alpha)}}\!\!\!\! 
\Phi(\beta)V_{d(\beta)}h_\beta \bigr\rangle_{\cK_{\tau(c(\alpha))}} 
\\ & \leq \|\Phi(\alpha)\|^2 
 \bigl\langle\!\! \sum_{\gamma\in\Gamma^{d(\alpha)} }\!\! \Phi(\gamma)V_{d(\gamma)}h_\gamma,\!\!
\sum_{\beta\in\Gamma^{d(\gamma)}}\!\! \Phi(\beta)V_{d(\beta)}h_\beta \bigr\rangle_{\cK_{\tau(d(\alpha))}} \\
& = \|\Phi(\alpha)\|^2 \sum_{\beta,\gamma\in\Gamma^{d(\alpha)}} \langle 
  V_{d(\beta)}^*\Phi(\beta^*\gamma)V_{d(\gamma)}
h_\gamma,h_\beta\rangle_{\cH_{\tau(d(\beta))}} \\
& =   \|\Phi(\alpha)\|^2 \sum_{\beta,\gamma\in\Gamma^{d(\alpha)}} \langle
   T(\beta^*\gamma)h_\gamma,h_\beta\rangle_{\cH_{\tau(d(\beta))}}.
  \end{align*}
This proves the boundedness condition (b).
\end{proof}

Also, Remark~\ref{r:cohu} holds for the
orthogonal minimal dilation $(\bK;\Phi;\bV)$ 
constructed during the proof of Theorem~\ref{t:nagy}, namely that
\begin{equation*}
\sum_{s\in\tau^{-1}(\{x\})} \Phi(\epsilon_s) =I_{\cK_x},\quad x\in X.
\end{equation*}
Similarly to Remark~\ref{r:rkhs}, the construction of the spaces $\cK_x$ in the proof of Theorem~\ref{t:nagy} is
performed in such a way that it is a reproducing kernel Hilbert space on each fibre
$\Gamma^s$, for $s\in S$ such that 
$x=\tau(s)$. Similarly, Corollary~\ref{c:embedu} can be translated word-for-word to the current setting of
dilations.

There are special cases of $*$-semigroupoids $\Gamma$ for which the boundedness condition
(b) in Theorem~\ref{t:nagy} holds automatically. Clearly this is the case when $\Gamma$ 
is a groupoid, but there is a more general case as well, that of an inverse semigroupoid with unit.

\begin{corollary}\label{c:invsem} 
With notation as before, let $T\in\fH_{\bH,\tau}(\Gamma)$ and, assume, in addition, that $\Gamma$ is an inverse 
semigroupoid with unit. Then the following assertions are equivalent.
\begin{itemize}
\item[(1)]  $T$ is positive semidefinite, in the sense of Definition~\ref{d:psm}.   
\item[(2)] $T$ has a dilation $(\bK;\Phi;\bV)$.
\end{itemize}
In addition, if $T$ is positive semidefinite then we can always obtain an orthogonal minimal dilation 
$(\bK;\Phi;\bV)$ such that $\Phi(\alpha)$ is a partial isometry, for all $\alpha\in\Gamma$.
\end{corollary}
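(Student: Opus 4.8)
The plan is to deduce this corollary from Theorem~\ref{t:nagy} by verifying that, when $\Gamma$ is an inverse semigroupoid with unit, the boundedness condition (b) holds automatically with $C(\alpha)=1$ for every $\alpha\in\Gamma$. First I would recall, from Remark~\ref{r:isi}, that an inverse semigroupoid is a $*$-semigroupoid with $\alpha^*=\alpha'$, and that the idempotents $e(\alpha):=\alpha^*\alpha$ and $e(\alpha^*)=\alpha\alpha^*$ satisfy $\alpha e(\alpha)=\alpha$, $e(\alpha)\alpha^*=\alpha^*$, and that idempotents in an inverse semigroupoid commute (a standard fact for inverse semigroups that carries over fibrewise to the isotropy semigroups and, more generally, to composable idempotents with the same source and range). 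The point is that for a positive semidefinite map $T$ one expects the ``Kadison–Schwarz''-type inequality
\begin{equation*}
\sum_{\beta,\gamma\in\Gamma^{d(\alpha)}}\langle T(\beta^*\alpha^*\alpha\gamma)h_\gamma,h_\beta\rangle_{\cH_{\tau(d(\beta))}}
\le \sum_{\beta,\gamma\in\Gamma^{d(\alpha)}}\langle T(\beta^*\gamma)h_\gamma,h_\beta\rangle_{\cH_{\tau(d(\beta))}},
\end{equation*}
which is exactly condition (b) with $C(\alpha)=1$.

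The cleanest way to see this is to first run the unbounded construction of Theorem~\ref{t:nagyu}: since $T$ is positive semidefinite we obtain the reproducing-kernel spaces $\cG_s$, the operators $\bT_s$, and the representation operators $\Phi(\alpha)f=\{f_{\alpha^*\gamma}\}_{\gamma\in\Gamma^{c(\alpha)}}$ with $\Phi(\alpha^*)\subseteq\Phi(\alpha)^*$ and $\Phi(\alpha\beta)=\Phi(\alpha)\Phi(\beta)$. Then condition (b) for a fixed $\alpha$ is, after translating via $f=\bT_{d(\alpha)}g$ exactly as in \eqref{e:calfa}, the inequality
\begin{equation*}
\langle\Phi(\alpha^*\alpha)f,f\rangle_{\cG_{d(\alpha)}}\le\langle f,f\rangle_{\cG_{d(\alpha)}},\qquad f\in\cG_{d(\alpha)}.
\end{equation*}
Now $E:=\Phi(\alpha^*\alpha)=\Phi(e(\alpha))$ is, by the argument in Remark~\ref{r:epsu} applied to the selfadjoint idempotent $e(\alpha)=e(\alpha)^*=e(\alpha)^2$ of $\Gamma$, the restriction to $\cG_{d(\alpha)}$ of an orthogonal projection on $\cL_{d(\alpha)}$; in particular $0\le E\le I$, which gives the inequality with $C(\alpha)=1$. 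Hence condition (1)(b) of Theorem~\ref{t:nagy} is automatic and the equivalence $(1)\Leftrightarrow(2)$ follows, together with the existence of an orthogonal minimal dilation unique up to unitary equivalence.

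It remains to show $\Phi(\alpha)$ is a partial isometry for every $\alpha\in\Gamma$ in this orthogonal minimal dilation. Here I would argue that $\Phi(\alpha^*)=\Phi(\alpha)^*$ on $\cK_{\tau(d(\alpha))}$ for the lifted bounded operators: the inclusion $\Phi(\alpha^*)\subseteq\Phi(\alpha)^*$ from \eqref{e:omast} becomes an equality once both sides are bounded and we account for the direct-sum decomposition \eqref{e:cekata}, using that $\Phi$ kills the summands $\cL_t$ with $t\ne d(\alpha)$, $\tau(t)=\tau(d(\alpha))$, and likewise on the codomain side — the orthogonality built into the construction \eqref{e:kax} matches the block structure of $\Phi(\alpha)^*$ exactly. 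Granting $\Phi(\alpha)^*=\Phi(\alpha^*)$, we get $\Phi(\alpha)^*\Phi(\alpha)=\Phi(\alpha^*\alpha)=\Phi(e(\alpha))$, which we already identified as an orthogonal projection; a bounded operator $A$ with $A^*A$ a projection is a partial isometry, so $\Phi(\alpha)$ is a partial isometry. I expect the main obstacle to be the bookkeeping in the last step: verifying carefully that passing from the densely-defined $\Phi(\alpha)$ on $\cG_{d(\alpha)}$ to the lifted bounded operator on all of $\cK_{\tau(d(\alpha))}$ preserves the adjoint relation $\Phi(\alpha^*)=\Phi(\alpha)^*$ (not merely the inclusion), since this is where the aggregation/orthogonality structure of the bundle must be used and where a naive argument could fail. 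One should also remark, as a byproduct, that $\Phi(\epsilon_s)$ is an orthogonal projection for each $s\in S$ and the $\{\Phi(\epsilon_s)\}_{s\in\tau^{-1}(\{x\})}$ are mutually orthogonal summing to $I_{\cK_x}$ by Remark~\ref{r:cohu}, so the dilation indeed resembles a Cuntz–Krieger–Toeplitz family of mutually orthogonal partial isometries.
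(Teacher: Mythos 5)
Your proposal is correct and follows essentially the same route as the paper: both arguments rest on the observation that, since $\alpha^*\alpha$ is a selfadjoint idempotent (equivalently, via $\alpha^*\alpha\alpha^*=\alpha^*$), the operator $\Phi(\alpha^*\alpha)=\Phi(\alpha)^*\Phi(\alpha)$ arising from the construction in Theorem~\ref{t:nagyu} is the restriction of an orthogonal projection, so $\Phi(\alpha)$ is contractive and the boundedness obstruction disappears. Your packaging --- verifying condition (b) of Theorem~\ref{t:nagy} with $C(\alpha)=1$ and then checking $\Phi(\alpha)^*=\Phi(\alpha^*)$ for the lifted bounded operators to conclude the partial-isometry property --- is just a slightly more explicit rendering of what the paper does, which proves contractivity directly inside the construction and leaves the partial-isometry verification as ``easy to see''.
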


\begin{proof} Assume that $T$ is positive semidefinite. With notation as in the proof of 
Theorem~\ref{t:nagyu}, for an arbitrary element $\alpha\in\Gamma$, we consider the operator 
$\Phi(\alpha)\colon \cG_{d(\alpha)}\ra \cG_{c(\alpha)}$  as in \eqref{e:omaf} and we want to prove
that it is bounded.

Indeed, in Step 3 of the proof of Theorem~\ref{t:nagyu}, see \eqref{e:omast},
it is shown that the operator 
$\Phi(\alpha^*)\colon \cG_{c(\alpha)}\ra\cG_{d(\alpha)}$ has the property 
$\Phi(\alpha^*)\subseteq \Phi(\alpha)^*$, when viewing $\Phi(\alpha)$ as a 
densely defined linear operator from the Hilbert space $\cL_{d(\alpha)}$ to the Hilbert space 
$\cL_{c(\alpha)}$. Also, as a consequence of \eqref{e:omab}, \eqref{e:omast}, 
and of the fact that $\Gamma$ is an 
inverse semigroupoid, hence $\alpha^*\alpha\alpha^*=\alpha^*$, 
it follows that, for any $h\in\cG_{c(\alpha)}$ we have
\begin{equation*}
\Phi(\alpha)^*\Phi(\alpha)\Phi(\alpha)^*h=\Phi(\alpha^*)\Phi(\alpha)
\Phi(\alpha^*)h=\Phi(\alpha^*\alpha\alpha^*)h=\Phi(\alpha^*)h,
\end{equation*}
hence, for any $k\in\cG_{d(\alpha)}$, letting $h=\Phi(\alpha)k$, we have
\begin{equation*}
\Phi(\alpha)^*\Phi(\alpha)\Phi(\alpha)^*\Phi(\alpha)k=\Phi(\alpha^*)\Phi(\alpha)k,
\end{equation*}
that is, the operator $\Phi(\alpha)^*\Phi(\alpha)\colon \cG_{d(\alpha)}\ra\cG_{d(\alpha)}$ is a 
projection. This implies that $\cG_{d(\alpha)}$ has a decomposition
\begin{equation*}
\cG_{d(\alpha)}=\ker(\Phi(\alpha)^*\Phi(\alpha))\dot+ \ran(\Phi(\alpha)^*\Phi(\alpha)).
\end{equation*}
In addition, 
\begin{equation*}
\ker(\Phi(\alpha)^*\Phi(\alpha))\perp \ran(\Phi(\alpha)^*\Phi(\alpha)),
\end{equation*}
which follows easily from the fact that $\Phi(\alpha)^*\Phi(\alpha)h=\Phi(\alpha^*\alpha)h$ holds for
all $h\in\cG_{d(\alpha)}$, see \eqref{e:omast} and \eqref{e:omab}. 
Also from here we get that $\ker(\Phi(\alpha)^*\Phi(\alpha))=\ker(\Phi(\alpha))$. Then, letting
$h\in\cG_{d(\alpha)}$ be arbitrary, hence $h=h_0+h_1$, where $h_0\in\ker(\Phi(\alpha))$ and
$h_1\in\ran(\Phi(\alpha)^*\Phi(\alpha))$ hence,
\begin{align*}
\langle \Phi(\alpha)h,\Phi(\alpha)h\rangle_{\cK_{\tau(c(\alpha))}} &  = 
\langle \Phi(\alpha)h_1,\Phi(\alpha)h_1\rangle_{\cK_{\tau(d(\alpha))}} \\
& = \langle \Phi(\alpha)^*\Phi(\alpha)h_1, h_1\rangle_{\cK_{\tau(d(\alpha))}} \\
& =\langle h_1,h_1\rangle_{\cH_{\tau(d(\alpha))}}\leq \langle h,h\rangle_{\cK_{\tau(d(\alpha))}},
\end{align*}
where we have taken into account that $\Phi(\alpha)^*\Phi(\alpha)$ is a projection and acts like
dentity operator on its range. Thus, we have proven that the operator $\Phi(\alpha)$ is bounded
and hence it can be extended to a linear bounded operator $\cL_{d(\alpha)}\ra \cL_{c(\alpha)}$. Then,
proceeding as in \eqref{e:cekat} and \eqref{e:phia}, we lift the operator $\Phi(\alpha)$ to a bounded 
linear operator $\cK_{\tau(d(\alpha))}\ra \cK_{\tau(c(\alpha))}$, and it is easy to see that it is a partial 
isometry.
\end{proof}

\section{Linearisations of Positive Semidefinite Maps}

In this section we pass from semigroupoids to algebroids and hence perform a linearisation of the concepts that
have been studied in the previous sections. Our goal is to obtain dilation theorems of Stinespring type. Here the main 
concept is that of a $*$-algebroid. As in the 
previous sections, we will do this in two steps, firstly by dilation with possibly unbounded operators and then by
bounded operators that involve
the boundedness condition on the representation of the algebroid. We show that in the case of $B^*$-algebroids, the boundedness condition is automatically satisfied and, in addition,
we relate the positive semidefiniteness to complete positivity, in the spirit of the original version of 
Stinespring's Theorem \cite{Stinespring}. Finally, we single out the concept of a $C^*$-algebroid and 
tackle some natural questions on it.
 
\subsection{Positive Semidefinite Maps on $*$-Algebroids} 
 \begin{definition} An \emph{algebroid}, see \cite{Pradines} and \cite{Mosa}, $\cA$ over the field $\CC$ 
 is a semigroupoid with the following additional properties.
 \begin{itemize}
 \item[(a1)] For each $s,t\in S_\cA$, the fibre $\cA_s^t$ is a complex vector space.
 \item[(a2)] For each $s,t\in S_\cA$ the following distributivity properties hold.
 \begin{itemize}
 \item[(i)] $a (\alpha b+\beta c)=\alpha ab+\beta ac$ for all $b,c\in\cA_s^t$,
 $a\in \cA_t$, and $\alpha,\beta\in\CC$.
 \item[(ii)] $(\alpha b+\beta c)d=\alpha bd+\beta cd$ for all $b,c\in\cA_s^t$, $d\in \cA^s$, and $\alpha,\beta\in\CC$.
 \end{itemize}
 \end{itemize}
 The algebroid $\cA$ has a \emph{unit} if the underlying semigroupoid has a unit.
 \end{definition}
 
 \begin{example}
 With notation as in Example~\ref{ex:semes}, letting
 \begin{equation*}
 \cA=\bigsqcup_{s,t\in S} \cL(\cD_s,\cD_t),
 \end{equation*}
 the semigroupoid $\cA$ has a natural structure of algebroid with unit.
 \end{example}
   
Let us observe that, if $\cA$ is an algebroid, then for each $s\in S_\cA$, the fibre $\cA_s^s$ is a complex algebra, 
called the \emph{isotropy algebra} of $\cA$ at $s$. If the algebroid has a unit $e$ then each isotropy algebra 
$\cA_s^s$ has a unit $e_s$. 

 \begin{definition} Given $\cA$ an algebroid, 
 an \emph{involution} on $\cA$ is a map $*\colon \cA\ra\cA$ that turns
 the underlying semigroupoid into a $*$-semigroupoid and subject to the following property
 \begin{itemize}
 \item[(ai)] For any $s,t\in S_\cA$, $a,b\in \cA_s^t$, $\alpha,\beta\in\CC$, we have $(\alpha a+\beta b)^*=\ol \alpha a^*+\ol \beta b^*$.
 \end{itemize}
 \end{definition}
 
 \begin{example}\label{ex:bh} 
 With notation and assumptions as in Example~\ref{ex:semuh}, letting
 \begin{equation}
  \cL_\bD^*(\bH):=\bigsqcup_{s,t\in S} \cL^*(\cD_s,\cD_t),
 \end{equation}
 we have a $*$-algebroid with unit.
 \end{example}

\begin{definition}\label{d:repa} Let $\cA$ and $\cB$ be two algebroids. A  pair $(\phi;\Phi)$ is an \emph{algebroid
morphism} from $\cA$ to $\cB$ if the following conditions hold.
\begin{itemize}
\item[(am1)] The pair $(\phi;\Phi)$ is a semigroupoid morphism from $\cA$ to $\cB$, in the sense of 
Definition~\ref{d:sm}.
\item[(am2)] For every $s,t\in S_\cA$ the map $\Phi|_{\cA_s^t}\colon \cA_s^t\ra \cB_{\phi(s)}^{\phi(t)}$ is linear.
\end{itemize}
If the two algebroids $\cA$ and $\cB$ are $*$-algebroids, the pair $(\phi;\Phi)$ is a \emph{$*$-algebroid morphism},
or  a \emph{$*$-morphism} if, in addition to (am1) and (am2) it is Hermitian
\begin{itemize}
\item[(am3)] $\Phi(a^*)=\Phi(a)^*$ for all $a\in \cA$.
\end{itemize}
\end{definition}

\begin{definition}\label{d:repu} 
Let $\cA$ be a $*$-algebroid, $\bD=\{\cD_x\}_{x\in X}$ a bundle of vector spaces,  and 
$\bH=\{\cH_x\}_{x\in X}$ a bundle of Hilbert spaces, where $\cD_x$ is a dense subspace of the Hilbert space
$\cH_x$ for all $x\in X$. An \emph{unbounded $*$-representation} of $\cA$ on the pair of bundles $(\bD;\bH)$ 
is a pair of maps $(\phi;\Phi)$, where $\phi\colon S_\cA\ra X$ and $\Phi\colon \cA\ra\cL^*_\bD(\bH)$,
subject to the following conditions.
\begin{itemize}
\item[(uar1)] For any $s,t\in S_\cA$  and $a\in \cA_s^t$, $\Phi(a)$ is a linear operator such that
$\cD_{\phi(s)}\subseteq \dom(\Phi(a))$ and 
$\Phi(a)\cD_{\phi(s)}\subseteq\cD_{\phi(t)}$.
\item[(uar2)] For any $s\in S_\cA$, any $a\in \cA_s^u$, and any $b\in\cA^s_v$ we have 
$\Phi(ab)|_{\cD_{\phi(v)}}=\Phi(a)\Phi(b)|_{\cD_{\phi(v)}}$.
\item[(uar3)] For any $s,t\in S_\cA$ and any $a,b\in\cA_s^t$ we have 
$\Phi(a+b)|_{\cD_{\phi(s)}}=\Phi(a)|_{\cD_{\phi(s)}}+\Phi(b)|_{\cD_{\phi(s)}}$.
\item[(uar4)] For any $a\in \cA_s^t$ we have
$\Phi(a)^*\cD_{\phi(t)}\subseteq\cD_{\phi(s)}$ and $\Phi(a^*)|_{\cD_{\phi(t)}}
=\Phi(a)^*|_{\cD_{\phi(t)}}$.
\end{itemize}

We call the unbounded $*$-representation $(\phi;\Phi)$ \emph{orthogonal} if the following property holds
\begin{itemize}
\item[(uar5)] For any $a\in \cA^s_u$ and $b\in \cA^t_v$ with $s\neq t$ and such that
$\phi(s)=\phi(t)$
 it follows that $\Phi(a)\cD_{\phi(u)}\perp 
\Phi(\beta)\cD_{\phi(v)}$.
\end{itemize}
\end{definition}

\begin{definition}\label{d:uda} 
Let $T\colon \cA\ra\cB(\bH)$ be a $\tau$-coherent and Hermitian map, for some bundle of 
Hilbert spaces $\bH=\{\cH_x\}_{x\in X}$ and some aggregation map $\tau\colon S_\cA\ra X$.
A \emph{generalised dilation} of $T$ on $(\bK;\bD)$ is a 
quadruple $(\bK;\bD;\Phi;\bV)$ subject to the following conditions.
\begin{itemize}
\item[(ad1)] $\bK=\{\cK_x\}_{x\in X}$ is a bundle of Hilbert spaces, $\bD=\{\cD_x\}_{x\in X}$ is a bundle of 
vector spaces such that $\cD_x$ is a dense subspace of $\cK_x$ for all $x\in X$.
\item[(ad2)] $\bV=\{V_s\}_{s\in S}$ is a bundle of operators, with $V_s\in\cB(\cH_{\tau(s)},\cK_{\tau(s)})$ such 
that $\ran(V_s)\subseteq \cD_{\tau(s)}$ for each $s\in S_\cA$.
\item[(ad3)] $(\Phi;\tau)$ is an unbounded 
$*$-representation of $\cA$ on $(\bK;\bD)$, in the sense of Definition~\ref{d:repu}, such that
\begin{equation*}
T(a)=V_{\tau(t)}^* \Phi(a) V_{\tau(s)},\quad s,t\in S_\cA,\ a\in \cA_s^t.
\end{equation*}
\end{itemize}

In addition, a generalised dilation 
$(\bK;\bD;\bV;\Phi)$ of $T$ is called \emph{orthogonal} in the sense of Definition~\ref{d:repu},
more precisely, for any $a\in \cA_u^s$ and $b\in \cA_v^t$ with $s\neq t$ and $\tau(s)=\tau(t)$, 
we have $\Phi(a)\cD_{\tau(u)}\perp \Phi(b)\cD_{\tau(v)}$. 

The generalised dilation $(\bK;\bD;\Phi;\bV)$ is \emph{minimal} in the sense of Definition~\ref{d:minimalu},
more precisely, for each $x\in X$ we have
\begin{equation*}
\cK_x=\clos\lin \{\Phi(a)V_s\cH_{\tau(s)}\mid s,t\in S_\cA,\ a\in\cA_s^t,\ \tau(t)=x\}.
\end{equation*}

Two generalised dilations $(\bK;\bD;\bV;\Phi)$ and $(\bK^\prime;\bD^\prime;\bV^\prime;\Phi^\prime)$ 
of $T$ are \emph{unitarily equivalent} if there exists a bundle $\bU=\{U_x\}_{x\in X}$ subject to the following 
conditions.
\begin{itemize}
\item[(au1)] For each $x\in X$ the operator $U_x\colon \cK_x\ra\cK_x^\prime$ is unitary.
\item[(au2)] For each $x\in X$, $U_x\cD_x=\cD_x^\prime$.
\item[(au3)] The bundle $\bU$ intertwines the $*$-representations $\Phi$ and $\Phi^\prime$, that is,
\begin{equation*}
U_{\tau(t)}\Phi(a)|_{\cD_{\tau(s)}}=\Phi^\prime(a) U_{\tau(s)}|_{\cD_{\tau(s)}},\quad s,t\in S_\cA,\ a\in\cA_s^t.
\end{equation*}
\item[(au4)] The bundle $\bU$ maps the bundle $\bV$ to $\bV^\prime$, that is, $U_{\tau(s)}V_s=V_s^\prime$.
\end{itemize}
\end{definition}

\begin{theorem}\label{t:stinespringu} Let $\cA$ be a $*$-algebroid with unit, 
$\bH=\{\cH_x\}_{x\in X}$ a bundle of Hilbert
spaces, $\tau\colon S_\cA\ra X$ an aggregation map, and $T\colon \cA\ra \cB(\bH)$ a  Hermitian and
$\tau$-coherent map. The following assertions are equivalent.
\begin{itemize}
\item[(1)] $T$ is positive semidefinite in the sense of Definition~\ref{d:psm}.
\item[(3)] There exists a generalised dilation $(\bK;\bD;\bV;\Phi)$ of $T$.
\end{itemize}

In addition, if exists, the generalised dilation $(\bK;\bD;\bV;\Phi)$ can be chosen orthogonal and minimal.
Moreover, any two generalised dilations $(\bK;\bD;\bV;\Phi)$ and 
$(\bK^\prime;\bD^\prime;\bV^\prime;\Phi^\prime)$, that are minimal and orthogonal, are unitarily equivalent.
\end{theorem}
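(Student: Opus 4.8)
The plan is to reduce the statement to Theorem~\ref{t:nagyu} applied to the underlying $*$-semigroupoid with unit of $\cA$, and then to supply a single extra verification: that the dilating representation produced there is linear on the fibres, hence is a $*$-representation of the \emph{algebroid} $\cA$ and not merely of its underlying $*$-semigroupoid. First, note that a Hermitian $\tau$-coherent map $T\colon\cA\ra\cB(\bH)$ is in particular a Hermitian $\tau$-coherent map on the underlying $*$-semigroupoid in the sense of Definition~\ref{d:hm}, and that being positive semidefinite means literally the same thing in both pictures (Definition~\ref{d:psm}). Thus $(3)\Ra(1)$ is immediate from Proposition~\ref{p:psmu}, since any generalised dilation of $T$ in the sense of Definition~\ref{d:uda} is, after forgetting the linear structure, a generalised dilation in the sense of Definition~\ref{d:udilation}. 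For $(1)\Ra(3)$ I would invoke Theorem~\ref{t:nagyu}, which produces an orthogonal minimal generalised dilation $(\bK;\bD;\Phi;\bV)$ of $T$ as a $*$-semigroupoid map, together with the factorisation $T(a)=V_{c(a)}^*\,\Phi(a)\,V_{d(a)}$ and uniqueness up to unitary equivalence. The operator family $\Phi$ already satisfies conditions (uar1), (uar2), (uar4) of Definition~\ref{d:repu}, for these coincide with conditions (UR2), (UR3), (UR6)--(UR7) of Definition~\ref{d:urep}, which hold by construction; so the only remaining point is the linearity condition (uar3), i.e.\ that $\Phi$ is linear on each fibre $\cA_s^t$.

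This fibre-linearity is the one genuinely new ingredient, and it has to be extracted through the inner product rather than read off from the defining formula $\Phi(\alpha)f=\{f_{\alpha^*\gamma}\}_{\gamma\in\Gamma^{c(\alpha)}}$ of \eqref{e:omaf} — indeed a cross-section is a function on the mere \emph{set} $\Gamma^{c(\alpha)}=\cA^{c(\alpha)}$, so $f_{\alpha^*\gamma}$ is not linear in $\alpha$. Fix $a,b\in\cA_s^t$ and $\lambda,\mu\in\CC$, and set $c:=\lambda a+\mu b\in\cA_s^t$; all three of $\Phi(a),\Phi(b),\Phi(c)$ map $\cG_s$ into $\cG_t$. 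For $f=\bT_sg\in\cG_s$ and $f^\prime=\bT_tg^\prime\in\cG_t$, an intermediate identity in Step~3 of the proof of Theorem~\ref{t:nagyu} gives
\begin{equation*}
\langle\Phi(\alpha)f,f^\prime\rangle_{\cG_{c(\alpha)}}=\sum_{\gamma\in\Gamma^{c(\alpha)}}\sum_{\eta\in\Gamma^{d(\alpha)}}\langle T(\gamma^*\alpha\eta)g_\eta,g^\prime_\gamma\rangle_{\cH_{\tau(d(\gamma))}}.
\end{equation*}
By the distributivity axioms (a2)(i)--(ii) of the algebroid together with associativity, $\gamma^*c\eta=\lambda\,\gamma^*a\eta+\mu\,\gamma^*b\eta$ inside the fibre $\cA_{d(\eta)}^{d(\gamma)}$, so, using the linearity of $T$ on that fibre, $T(\gamma^*c\eta)=\lambda\,T(\gamma^*a\eta)+\mu\,T(\gamma^*b\eta)$. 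Summing gives $\langle\Phi(c)f,f^\prime\rangle_{\cG_t}=\lambda\langle\Phi(a)f,f^\prime\rangle_{\cG_t}+\mu\langle\Phi(b)f,f^\prime\rangle_{\cG_t}$ for all $f\in\cG_s$ and $f^\prime\in\cG_t$; since $\Phi(a)f,\Phi(b)f,\Phi(c)f$ all lie in $\cG_t$, on which $\langle\cdot,\cdot\rangle_{\cG_t}$ is a genuine inner product, this forces $\Phi(c)f=\lambda\Phi(a)f+\mu\Phi(b)f$. Extending by the zero prescription \eqref{e:phia} from $\cG_s$ to $\cD_{\tau(s)}$ — the same prescription for $a,b,c$, since $d(a)=d(b)=d(c)=s$ — yields (uar3). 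Hence $(\tau;\Phi)$ is an unbounded $*$-representation of the $*$-algebroid $\cA$ and $(\bK;\bD;\Phi;\bV)$ is a generalised dilation of $T$ in the sense of Definition~\ref{d:uda}.

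The remaining assertions transfer verbatim from Theorem~\ref{t:nagyu}: the orthogonality and minimality conditions in Definition~\ref{d:uda}, as well as conditions (au1)--(au4) defining unitary equivalence there, make no reference at all to the linear structure and coincide with the corresponding notions for the underlying $*$-semigroupoid (Definitions~\ref{d:orthogonalu}, \ref{d:minimalu}, \ref{d:ueu}). So the dilation built above is orthogonal and minimal, and any two minimal orthogonal generalised dilations of $T$ as an algebroid are, in particular, minimal orthogonal generalised dilations of $T$ as a $*$-semigroupoid, hence are intertwined by a bundle of unitaries $\bU=\{U_x\}_{x\in X}$ by Theorem~\ref{t:nagyu}, and $\bU$ automatically satisfies (au1)--(au4). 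The main obstacle in the argument is precisely the fibre-linearity of $\Phi$ discussed in the second paragraph: it is invisible in the construction formula for $\Phi$ and must be obtained from the inner-product identity above and the distributivity of $\cA$ (together with fibre-linearity of $T$); everything else is bookkeeping over Theorem~\ref{t:nagyu}.
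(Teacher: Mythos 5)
Your proposal is correct and follows essentially the same route as the paper: the paper's proof of Theorem~\ref{t:stinespringu} is exactly a reduction to Theorem~\ref{t:nagyu} plus the observation that linearity of $T$ on the fibres forces linearity of $\Phi$. The only cosmetic difference is that the paper reads fibre-linearity directly off the componentwise identity \eqref{e:sulam}, $\bigl(\Phi(\alpha)f\bigr)_\gamma=\sum_{\beta\in\Gamma^{d(\alpha)}}T(\gamma^*\alpha\beta)g_\beta$, whereas you obtain it weakly through the inner-product identity \eqref{e:falfas} and the positive definiteness of $\langle\cdot,\cdot\rangle_{\cG_t}$ — both arguments rest on the same formula and on distributivity in $\cA$, so your verification is a valid, slightly longer variant of the paper's.
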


\begin{proof} This is a consequence of Theorem~\ref{t:nagyu}, with the observation that, during the construction of
the $*$-representation $\Phi$, from \eqref{e:sulam} it follows that the linearity of $T$ implies the linearity of $\Phi$.
\end{proof}

Given a $*$-algebroid $\cA$, a bundle of Hilbert spaces $\bH=\{\cH_x\}_{x\in X}$, an aggregation map $\tau\colon 
S_\cA\ra X$, and a Hermitian and $\tau$-coherent map $T\colon \cA\ra \cB(\bH)$, from the previous
definitions it is clear what a \emph{dilation} of $T$ should be.
For example, with notation as in Definition~\ref{d:uda}, this 
means that $\cD_x=\cK_x$ for all $x\in X$ and hence $\Phi(a)$ is a bounded linear operator 
from $\cK_{\tau(s)}$ to $\cK_{\tau(t)}$, where $a\in \cA_s^t$. For this reason, a dilation of $T$ is denoted by a triple 
$(\bK;\bV;\Phi)$. Then, as a consequence of Theorem~\ref{t:stinespringu} and of Theorem~\ref{t:nagy}, we
have the following result.

\begin{corollary}\label{c:stinespringa} Let $\cA$ be a $*$-algebroid with unit, 
$\bH=\{\cH_x\}_{x\in X}$ a bundle of Hilbert
spaces, $\tau\colon S_\cA\ra X$ an aggregation map, and $T\colon \cA\ra \cB(\bH)$ a Hermitian and
$\tau$-coherent map. The following assertions are equivalent.
\begin{itemize}
\item[(1)] $T$ satisfies the following conditions.
\begin{itemize}
\item[(a)] $T$ is positive semidefinite in the sense of Definition~\ref{d:psm}.
\item[(b)] For any $a\in\cA_s$, for some $s\in S_\cA$, there exists $C(a)\geq 0$ such that, for any $n\in\NN$, any
$b_j\in \cA_{s_j}^s$, for some $s_1,\ldots,s_n\in S_\cA$, and all $j=1,\ldots,n$, and all cross-sections $h_1,\ldots,h_j$ with $h_j\in \cH_{\tau(s_j)}$ for all $j=1,\ldots,n$, we have
\begin{equation*}
\sum_{i,j=1}^n \langle T(b_j^*a^*ab_i)h_i,h_j\rangle_{\cH_{\tau(s_j)} }\leq
C(a) \sum_{i,j=1}^n \langle T(b_j^*b_i)h_i,h_j\rangle_{\cH_{\tau(s_j)}}.
\end{equation*}
\end{itemize}
\item[(2)] There exists a dilation $(\bK;\bV;\Phi)$ of $T$.
\end{itemize}

In addition, if exists, the generalised dilation $(\bK;\bD;\bV;\Phi)$ can be chosen orthogonal and minimal.
Moreover, any two generalised dilations $(\bK;\bD;\bV;\Phi)$ and 
$(\bK^\prime;\bD^\prime;\bV^\prime;\Phi^\prime)$, that are minimal and orthogonal, are unitarily equivalent.
\end{corollary}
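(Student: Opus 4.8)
The plan is to obtain this statement by combining Theorem~\ref{t:nagy} with the linearisation argument already carried out for Theorem~\ref{t:stinespringu}; no new construction is needed. A $*$-algebroid with unit is, in particular, a $*$-semigroupoid with unit, and the hypotheses on $T$ say precisely that $T\in\fH_{\bH,\tau}(\cA)$ for this underlying $*$-semigroupoid (with $d(a)=s$, $c(a)=t$ for $a\in\cA_s^t$); positive semidefiniteness of $T$ in the sense of Definition~\ref{d:psm} also refers only to this underlying structure. So the first, and essentially only delicate, point is to recognise that condition (b) of the corollary is exactly the boundedness condition (b) of Theorem~\ref{t:nagy} read on the underlying $*$-semigroupoid of $\cA$. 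Indeed, a finitely supported cross-section $h=\{h_\gamma\}_{\gamma\in\Gamma^{d(a)}}$ (here $\Gamma=\cA$) has some finite support $\{b_1,\dots,b_n\}\subseteq\cA^{d(a)}$, and, writing $s_j:=d(b_j)$ and $h_j:=h_{b_j}$, the two double sums over $\Gamma^{d(a)}$ in Theorem~\ref{t:nagy}(b) reduce to the sums over $i,j=1,\dots,n$ appearing in the corollary; conversely, any choice of $n$, of $b_1,\dots,b_n\in\cA^{d(a)}=\bigsqcup_u\cA_u^{d(a)}$ and of vectors $h_1,\dots,h_n$ determines such a cross-section. Hence assertion (1) of the corollary is literally assertion (1) of Theorem~\ref{t:nagy}.

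By Theorem~\ref{t:nagy}, the implication $(1)\Ra(2)$ then produces an orthogonal, minimal dilation $(\bK;\Phi;\bV)$ of $T$ viewed as a map on the $*$-semigroupoid underlying $\cA$, unique up to unitary equivalence; and $(2)\Ra(1)$ is immediate, since a dilation of $T$ in the algebroid sense is in particular a dilation in the $*$-semigroupoid sense (the fibrewise-linearity axiom of an algebroid $*$-representation is extra information, not needed for the semigroupoid part). What remains is to check that the representation $\Phi$ delivered by Theorem~\ref{t:nagy} is actually an algebroid $*$-representation, i.e.\ that $\Phi|_{\cA_s^t}$ is linear for all $s,t\in S_\cA$. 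I would do this exactly as in the proof of Theorem~\ref{t:stinespringu}: the operators $\Phi(a)$ are the bounded extensions of the operators defined in \eqref{e:omaf}, for which \eqref{e:sulam} gives
\begin{equation*}
(\Phi(a)f)_\gamma=\sum_{\beta\in\cA^{d(a)}}T(\gamma^*a\beta)\,g_\beta,\qquad f=\bT_{d(a)}g\in\cG_{d(a)},\ \gamma\in\cA^{c(a)}.
\end{equation*}
For $a,a'\in\cA_s^t$ and $\lambda,\mu\in\CC$, the vector-space and distributivity axioms (a1)--(a2) of an algebroid give $\gamma^*(\lambda a+\mu a')\beta=\lambda\,\gamma^*a\beta+\mu\,\gamma^*a'\beta$ inside the fibre $\cA_{d(\beta)}^t$, and, since $T$ is linear on each fibre, this yields $\Phi(\lambda a+\mu a')=\lambda\Phi(a)+\mu\Phi(a')$ after passing to closures. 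Thus $\Phi$ is a $*$-representation of the $*$-algebroid $\cA$, so $(\bK;\bV;\Phi)$ is a dilation of $T$ in the algebroid sense; it is orthogonal and minimal, and uniqueness up to unitary equivalence is inherited from Theorem~\ref{t:nagy}.

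In short, the hard work is entirely contained in Theorem~\ref{t:nagyu}, Theorem~\ref{t:nagy}, and the linearity remark used for Theorem~\ref{t:stinespringu}. The only step demanding genuine care — rather than a mere translation of vocabulary — is the bookkeeping identifying the corollary's boundedness condition with condition (b) of Theorem~\ref{t:nagy}; once that is settled, the rest is a direct specialisation, and I do not anticipate any real obstacle beyond keeping the two indexings (``finitely supported cross-section over $\cA^{d(a)}$'' versus ``finite list $b_1,\dots,b_n\in\cA^{d(a)}$ with vectors'') consistent.
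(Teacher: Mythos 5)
Your proposal is correct and follows essentially the paper's own route: the paper derives Corollary~\ref{c:stinespringa} precisely as a consequence of Theorem~\ref{t:nagy} applied to the underlying $*$-semigroupoid together with the linearity observation from the proof of Theorem~\ref{t:stinespringu} (that \eqref{e:sulam} transfers fibrewise linearity of $T$ to $\Phi$). Your careful identification of condition (b) with the boundedness condition of Theorem~\ref{t:nagy} via finitely supported cross-sections is exactly the bookkeeping the paper leaves implicit.
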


\subsection{Completely Positive Maps on $B^*$-Algebroids}

\begin{definition}\label{d:posa} 
Let $\cA$ be a $*$-algebroid. For each $s\in S_\cA$ we consider the convex cone $\cA_s^{s,+}$ 
in $\cA_s^s$ 
generated by all elements $x^*x$, where $x\in \cA_s$, that is, $x\in \cA_s^t$ for some $t\in S_\cA$. 
An element $a\in \cA_s^s$ is called \emph{positive} if $a\in\cA_s^{s,+}$
and, for this, we simply write $a\geq 0$.
More precisely,
given $a\in\cA$, we have $a\geq 0$ if and only if there exist $t_1,\ldots,t_n\in S_\cA$ 
and $x_1\in\cA_s^{t_1},\ldots,x_n\in\cA_s^{t_n}$ such that $a=x_1^*x_1+\cdots+x_n^*x_n$.
\end{definition}

\begin{remark}\label{r:beha} Let $\bH=\{\cH_s\}_{s\in S}$ be a bundle of Hilbert spaces and let
 \begin{equation*}
\cB(\bH)=\bigsqcup_{s,t\in S} \cB(\cH_s,\cH_t).
\end{equation*}
Then, see Example~\ref{ex:segah}, $\cB(\bH)$ is a
$*$-algebroid with unit. It is well known, e.g. see \cite{Conway} or \cite{Arveson}, that, in this case, 
for any $s\in S$ and any $A\in\cB(\bH)_s^s=\cB(\cH_s,\cH_s)$, the following assertions are equivalent.
\begin{itemize}
\item[(i)] $A$ is positive in the sense of Definition~\ref{d:posa}.
\item[(ii)] $A=X^*X$ for some $X\in \cB(\bH)_s^t=\cB(\cH_s,\cH_t)$.
\item[(iii)] $A=X^*X$ for some $X\in \cB(\bH)_s^s=\cB(\cH_s,\cH_s)$.
 \end{itemize}
\end{remark}

\begin{definition} Given $\cA$ an algebroid, a \emph{system of submultiplicative norms} on $\cA$ is a 
collection $\{\|\cdot\|_{s,t}\mid s,t\in S_\cA\}$ subject to the following conditions.
 \begin{itemize}
 \item[(an1)] For each $s,t\in S_\cA$, $\|\cdot\|_{s,t}$ is a norm on the vector space $\cA_s^t$.
 \item[(an2)] For each $s,t,u\in S_\cA$ we have
 \begin{equation*}\|ab\|_{s,t}\leq \|a\|_{u,t}\, \|b\|_{s,u},\quad a\in\cA_u^t,\ b\in\cA_s^u.
 \end{equation*}
\end{itemize}
An algebroid $\cA$ endowed with a system of submultiplicative norms $\{\|\cdot\|_{s,t}\mid s,t\in S_\cA\}$ 
on it is called a \emph{normed algebroid}. A normed algebroid $\cA$ such that, for all $s,t\in S_\cA$ the norm
$\|\cdot\|_{s,t}$ is complete, is called a \emph{Banach algebroid}. Note that, in this case, 
the isotropy algebra $\cA_s^s$ is a Banach algebra, for all $s\in S$.
\end{definition}

\begin{example} With notation as in Example~\ref{ex:semes}, assume that $\bD=\{\cD_s\}_{s\in S}$ is a bundle of 
normed spaces. Then, letting 
\begin{equation*}\cB(\bD):=\bigsqcup_{s,t\in S}\cB(\cD_s,\cD_t),\end{equation*} 
where $\cB(\cD_s,\cD_t)$ denotes the vector space of all 
bounded linear operators $\cD_s\ra \cD_t$, 
$\cB(\bD)$ becomes in a natural fashion a normed algebroid, where for each $s,t\in S$,
$\|\cdot\|_{s,t}$ is the operator norm on $\cB(\cD_s,\cD_t)$. If $\cD_s$ is a Banach space for all $s\in S$, then
$\cB(\bD)$ is a Banach algebroid with unit.
\end{example}

\begin{definition}\label{d:csa}
If $\cA$ is a normed algebroid with an involution $*$ that is isometric, that is, 
$\|a^*\|_{t,s}=\|a\|_{s,t}$, for all $s,t\in S$ and 
$a\in\cA_s^t$, we call $\cA$ an \emph{involutive algebroid}.
 
If $\cA$ is a Banach algebroid with an isometric involution $*$, 
we call $\cA$ a \emph{$B^*$-algebroid}. Note that, in this case, for each $s\in S_\cA$, the isotropy
algebra $\cA_s^s$ is a $B^*$-algebra, in the sense that it is a Banach algebra with isometric involution. The
definition of positive elements is as in Definition~\ref{d:posa}.
\end{definition}

\begin{example} With notation and assumptions as in Remark~\ref{r:beha}, see also Example~\ref{ex:segah}, 
$\cB(\bH)$ is a $B^*$-algebroid with unit.
\end{example}
 
\begin{definition} \label{d:smorph}
If $\cA$ and $\cB$ are $*$-algebroids, the algebroid morphism $(\Phi;\phi)$ from $\cA$ to $\cB$ is called a
\emph{$*$-morphism} if $\Phi(a^*)=\Phi(a)^*$ for all $a\in \cA$.

An algebroid morphism from an algebroid $\cA$ to the algebroid $\cB(\bH)$, for some bundle $\bH$ of Hilbert 
spaces, see Example~\ref{ex:bh}, is called a \emph{representation} of $\cA$ on $\bH$. 
In case $\cA$ is a $*$-algebroid, an algebroid $*$-morphism from $\cA$ to $\cB(\bH)$ is called a 
\emph{$*$-representation} of $\cA$ on $\bH$. 
\end{definition}

\begin{definition}\label{d:ampl}
If $\cA$ is an algebroid and $n\in \NN$, we define the 
\emph{$n$-fold amplification}  ${}^n\!\!\cA$ in the following way. The 
symbol set $S_{{}^n\!\!\cA}=S_\cA^n$ consists in all possible $n$-tuples $\bs=(s_1,\ldots,s_n)$ with
$s_j\in S_\cA$ for all $j=1,\ldots,n$. For each $\bs,\bt\in S_{{}^n\!\!\cA}$ the fibre 
${}^n\!\!\cA_\bs^\bt$ consists in all
$n\times n$ matrices $A=[a_{i,j}]_{i,j=1}^n$ with entries $a_{i,j}\in \cA_{s_j}^{t_i}$ for all $i,j=1,\ldots,n$. 
Clearly,  ${}^n\!\!\cA$ is a vector space. If 
$A\in {}^n\!\!\cA_\bv^\bt$ and $B\in {}^n\!\!\cA^\bv_\bs$ then the matrix multiplication $AB$ is possible and
$AB\in {}^n\!\!\cA_\bs^\bt$. 

If $\cA$ is a $*$-algebroid then, for any $A\in {}^n\!\!\cA$, by matrix transposition and elementwise 
involution, we define $A^*\in {}^n\!\!\cA$.
\end{definition}

\begin{remark}
If $\cA$ is an algebroid then, for each $n\in\NN$, the \emph{$n$-th amplification} ${}^n\!\!\cA$, with the algebraic 
structure described in the previous definition, is an algebroid.

 If $\cA$ is a $*$-algebroid then, for each $n\in\NN$, ${}^n\!\!\cA$ is a $*$-algebroid.
\end{remark}

\begin{definition}\label{d:np} 
If $\cA$ and $\cB$ are two algebroids over $\CC$, let $\tau\colon S_\cA\ra S_\cB$ be an aggregation map, and let 
$T\colon \cA\ra\cB$ be a map that is $\tau$-coherent. 

For any $n\in\NN$, we define the $n$-th amplification $T^{(n)}\colon {}^n\!\!\cA\ra {}^n\cB$ in the 
following way. For any $\bs,\bt\in S_\cA^n$, let $A=[a_{i,j}]_{i,j=1}^n\in {}^n\!\!\cA_\bs^\bt$, hence 
$a_{i,j}\in \cA_{s_j}^{t_i}$ for all $i,j=1,\ldots,n$, and we define
\begin{equation}\label{e:tena}
T^{(n)}(A)=[T(a_{i,j})]_{i,j=1}^n\in {}^n\cB_{\tau(\bs)}^{\tau(\bt)},
\end{equation}
where the aggregation map $\tau$ is lifted to $S_\cA^n\ra S_\cB^n$ in the natural fashion: 
$\tau(s_1,\ldots,s_n)=(\tau(s_1),\ldots,\tau(s_n))$ for all $\bs=(s_1,\ldots, s_n)\in S_\cA^n$.
\end{definition}

\begin{definition}\label{d:cp} Assume that $\cA$ and $\cB$ are $*$-algebroids and the rest of notation as in the 
previous definition.
The map $T\colon \cA\ra\cB$ is called \emph{$n$-positive} if $T$ is Hermitian, in the sense that $T(a^*)=T(a)^*$
for all $a\in \cA$, and
maps positive elements in ${}^n\!\!\cA$ to positive elements
in ${}^n\cB$, in the sense of Definition~\ref{d:posa}. More precisely, for any 
$\bs\in S_\cA^n$ and any $A\in {}^n\!\!\cA_\bs^\bs$ such that $A\geq 0$, it 
follows that $T(A)$ is positive in ${}^n\cB_{\tau(s)}^{\tau(s)}$. 

The map $T$ is called \emph{completely positive} if it is $n$-positive for all $n\in\NN$.
\end{definition}

\begin{remark} Assume the notation as in Definition~\ref{d:cp}.
If $T$ is $n$-positive for some $n\in\NN$ then it is 
$k$-positive for all $k\in \{1,2,\ldots,n-1\}$.
\end{remark}

The main result of this section is a Stinespring type theorem. Following a generalisation obtained by W.B.~Arveson in
\cite{Arveson1}, the dilation is obtained at the general level of $B^*$-algebroids with unit.

\begin{theorem}\label{t:stinespring} Let $\cA$ be a $B^*$-algebroid with unit, 
$\bH=\{\cH_x\}_{x\in X}$ a bundle of Hilbert
spaces, $\tau\colon S_\cA\ra X$ an aggregation map, and $T\colon \cA\ra \cB(\bH)$ a Hermitian and
$\tau$-coherent map. The following assertions are equivalent.
\begin{itemize}
\item[(1)] $T$ is completely positive in the sense of Definition~\ref{d:cp}.
\item[(2)] $T$ is positive semidefinite in the sense of Definition~\ref{d:psm}.
\item[(3)] There exists a dilation $(\bK;\bV;\Phi)$ of $T$.
\end{itemize}

In addition, if exists, the dilation $(\bK;\bV;\Phi)$ can be chosen orthogonal and minimal.
Moreover, any two dilations $(\bK;\bV;\Phi)$ and $(\bK^\prime;\bV^\prime;\Phi^\prime)$ of $T$, 
that are minimal and orthogonal, are unitarily equivalent.
\end{theorem}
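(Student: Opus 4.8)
The plan is to prove the implications $(3)\Ra(2)$, $(2)\Ra(1)$, $(1)\Ra(2)$ and $(2)\Ra(3)$, the last of these by quoting Corollary~\ref{c:stinespringa}, and to let the supplementary clauses on orthogonality, minimality and uniqueness up to unitary equivalence be inherited verbatim from Corollary~\ref{c:stinespringa} (hence ultimately from Theorem~\ref{t:nagy}). The implication $(3)\Ra(2)$ is immediate: a dilation $(\bK;\bV;\Phi)$ of $T$ is a particular generalised dilation, so Theorem~\ref{t:stinespringu}, or directly the computation in the proof of Proposition~\ref{p:psmu}, shows that $T$ is positive semidefinite.

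Next I would establish $(1)\Leftrightarrow(2)$ by a direct amplification argument. For $(2)\Ra(1)$, fix $n\in\NN$, $\bs\in S_\cA^n$ and $A\in{}^n\!\!\cA_\bs^\bs$ with $A\ge 0$, and write $A=\sum_k B_k^*B_k$ with $B_k=[b^{(k)}_{ij}]_{i,j}\in{}^n\!\!\cA_\bs^{\bt_k}$; by fibrewise linearity of $T$ one has $T^{(n)}(A)=\sum_k\bigl[\sum_\ell T\bigl((b^{(k)}_{\ell i})^*b^{(k)}_{\ell j}\bigr)\bigr]_{i,j}$, so for a cross-section $h=(h_i)_i$ with $h_i\in\cH_{\tau(s_i)}$ the form $\langle T^{(n)}(A)h,h\rangle$ decomposes into a finite sum over the pairs $(k,\ell)$ of expressions $\sum_{i,j}\langle T(\beta_i^*\beta_j)h_j,h_i\rangle$ in which the elements $\beta_i:=b^{(k)}_{\ell i}$ all lie in the single fibre $\Gamma^{(\bt_k)_\ell}$; each such expression is $\ge 0$ by \eqref{e:psd1}, so $T^{(n)}(A)$ is a positive operator, which by Remark~\ref{r:beha} applied to the amplified bundle (note that ${}^n\cB(\bH)$ is itself an algebroid of operators on a bundle of Hilbert spaces) is the same as positivity of $T^{(n)}(A)$ in ${}^n\cB(\bH)_{\tau(\bs)}^{\tau(\bs)}$. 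For the converse $(1)\Ra(2)$, given $s\in S_\cA$ and $\alpha_1,\dots,\alpha_n\in\Gamma^s$, the matrix $A:=[\alpha_i^*\alpha_j]_{i,j}$ equals $X^*X$ for the matrix $X\in{}^n\!\!\cA$ whose first row is $(\alpha_1,\dots,\alpha_n)$ and whose other entries vanish; hence $A\ge 0$, and $n$-positivity of $T$ together with Remark~\ref{r:beha} gives that $[T(\alpha_i^*\alpha_j)]_{i,j}$ is a positive operator on $\bigoplus_i\cH_{\tau(d(\alpha_i))}$, which is exactly the inequality \eqref{e:psd1}.

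The remaining step $(2)\Ra(3)$ is where the $B^*$-structure enters, and the plan is to verify condition (b) of Corollary~\ref{c:stinespringa}. Fix $a\in\cA_s$, say $a\in\cA_s^w$, and choose a real $\lambda>\|a\|_{s,w}$. Inside the unital $B^*$-algebra $\cA_s^s$ (unit $\epsilon_s$, isometric and hence norm-continuous involution) the element $x:=\lambda^{-2}a^*a$ is self-adjoint with $\|x\|\le\lambda^{-2}\|a\|_{s,w}^2<1$, so the binomial series $c_0:=\sum_{m\ge 0}\binom{1/2}{m}(-x)^m$ converges in $\cA_s^s$, is self-adjoint because its coefficients are real, and satisfies $c_0^2=\epsilon_s-x$; hence $c:=\lambda c_0$ is self-adjoint with $c^*c=\lambda^2\epsilon_s-a^*a$, so $\lambda^2\epsilon_s-a^*a\in\cA_s^{s,+}$. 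Then, for arbitrary $s_1,\dots,s_n\in S_\cA$, $b_j\in\cA_{s_j}^s$ and $h_j\in\cH_{\tau(s_j)}$, using $\epsilon_s b_i=b_i$ and $b_j^*c^*cb_i=(cb_j)^*(cb_i)$,
\begin{equation*}
\lambda^2\sum_{i,j}\langle T(b_j^*b_i)h_i,h_j\rangle-\sum_{i,j}\langle T(b_j^*a^*ab_i)h_i,h_j\rangle
=\sum_{i,j}\langle T\bigl((cb_j)^*(cb_i)\bigr)h_i,h_j\rangle\ge 0,
\end{equation*}
the last step being Definition~\ref{d:psm} for the family $\{cb_i\}_i\subset\Gamma^s$; thus (b) holds with $C(a)=\lambda^2$, and Corollary~\ref{c:stinespringa} provides a dilation with the stated additional properties. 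The hard part here is not conceptual but is the fibre bookkeeping in the amplification step $(1)\Leftrightarrow(2)$ and in condition (b); the positivity of $\lambda^2\epsilon_s-a^*a$, which is the only place the difference between $*$-algebroids and $B^*$-algebroids is used, follows cleanly from the convergent square-root series and needs nothing beyond the $B^*$-axioms.
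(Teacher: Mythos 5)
Your proposal is correct, and its two substantive steps coincide with the paper's: the implication (1)$\Ra$(2) via the matrix with a single nonzero row, and the implication (2)$\Ra$(3) via the binomial square-root series in the isotropy $B^*$-algebra $\cA_s^s$, used to verify the boundedness condition (b) of Corollary~\ref{c:stinespringa} (you keep $C(a)=\lambda^2$ with $\lambda>\|a\|_{s,w}$, where the paper first normalises $a$ and gets $C(a)=1$; both arguments use fibrewise additivity of $T$ in exactly the same way, and both appeal to Theorem~\ref{t:nagy} through Corollary~\ref{c:stinespringa} for the orthogonal, minimal and unique-up-to-unitary-equivalence clauses). Where you genuinely differ is in how the equivalence is closed: the paper proves the cycle (1)$\Ra$(2)$\Ra$(3)$\Ra$(1), obtaining complete positivity from a dilation by amplifying it to $(\bK^n;\Phi^{(n)};\bV^n)$ as in \eqref{e:kan}, whereas you prove (2)$\Ra$(1) directly, writing a positive $A\in{}^n\!\!\cA_\bs^\bs$ as $\sum_k B_k^*B_k$ and regrouping $\langle T^{(n)}(A)h,h\rangle$ into blocks indexed by $(k,\ell)$, each nonnegative by \eqref{e:psd1} because the entries of a fixed row of $B_k$ share the codomain $(\bt_k)_\ell$; you then get (3)$\Ra$(2) from Proposition~\ref{p:psmu}, and your identification of operator positivity with positivity in ${}^n\cB(\bH)$ via Remark~\ref{r:beha} is legitimate since ${}^n\cB(\bH)$ is the operator algebroid of the bundle of $n$-fold orthogonal sums. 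Your route has the advantage of showing that (1)$\Leftrightarrow$(2) holds for an arbitrary $*$-algebroid with unit, with no norm and no dilation, which sharpens the paper's closing remark that only (2)$\Ra$(3) is problematic in the general setting; the paper's route yields, as a by-product, an explicit dilation of each amplification $T^{(n)}$.
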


\begin{proof} (1)$\Ra$(2). For arbitrary $n\in \NN$, let $a_1,\ldots, a_n\in \cA$ with $a_i\in \cA_{s_i}^s$, 
for some $s,s_1,\ldots,s_n\in S_\cA$ and all $i=1,\ldots,n$, and let $h_i \in \cH_{\tau(s_i)}$ for $i=1,\ldots,n$. 
We consider the $n\times n$ matrix $A\in {}^n\!\!\cA_\bs^{\ol\bs}$, where $\bs=(s_1,s_2,\ldots,s_n)$ and 
$\ol\bs=(s,s,\ldots,s)$, defined by
\begin{equation*} A=\begin{bmatrix} a_1 & a_2 & \cdots & a_n \\ 0 & 0 & \cdots & 0 \\ 
\vdots & \vdots & \cdots & \vdots \\ 0 & 0 & \cdots & 0
\end{bmatrix}
\end{equation*}
and note that
\begin{equation*}
A^*A = \begin{bmatrix} a_1^*a_1 & a_1^*a_2 & \cdots & a_1^*a_n \\ a_2^*a_1 & a_2^*a_2 & \cdots & a_2^* a_n \\
\vdots & \vdots & \cdots & \vdots \\ a_n^*a_1 & a_n^*a_2 & \cdots & a_n^*a_n
\end{bmatrix} \in {}^n\!\!\cA_\bs^\bs.
\end{equation*}
Since $A^*A\geq 0$ and $T$ is completely positive, it follows that 
$T^{(n)}(A^*A)\geq 0$ in $\cB(\cH_{\tau(s_1)}\oplus \cdots\oplus \cH_{\tau(s_n)})$ hence
\begin{equation*}
\sum_{i,j=1}^n \langle T(a_i^*a_j)h_j,h_i\rangle_{\cH_{\tau(s_i)}} = \langle T^{(n)}(A^*A)\bah,\bah\rangle_{\cH_{\tau(s_1)}
\oplus\cdots\oplus \cH_{\tau(s_n)}} \geq 0,
\end{equation*}
where $\bah=(h_1,\ldots,h_n)$.
We have shown that $T$ is positive semidefinite.

(2)$\Ra$(3). Let $a\in \cA_t^u$. We first observe that, by replacing $a$ with $a/(1+\|a\|_{s,u})$, 
without loss of generality we can assume that $\|a\|_{t,u}<1$. Let us consider the power series expansion of the 
complex function $(1-\lambda)^{1/2}$ by means of the principal branch of the square root,
\begin{equation*}(1-\lambda)^{1/2}=1-\sum_{n\geq 1}c_n\lambda^n,
\end{equation*}
that converges in the open unit disc of the complex plane, where $c_n$ are real for all $n\geq 1$. Let
\begin{equation*}
b=e_t-\sum_{n\geq 1}c_n (a^*a)^n\in \cA_t^t
\end{equation*}
and observe that, since $\|a^*a\|_{t,t}\leq \|a\|_{t,u}^2<1$, the series converges absolutely. Also,
$b=b^*$ because $(a^*a)^n$ are all selfadjoint, the involution is continuous, 
and the coefficients $c_n$, $n\in \NN$, are all real. 
Then, we observe that this definition complies with the requirements of the functional calculus with holomorphic 
functions in Banach algebras, e.g.\ see \cite{Arveson}, and then, by multiplicativity we see 
that $e_t-a^*a=b^2$. Consequently, since $T$ is positive semidefinite, for arbitrary
$n\in\NN$, arbitrary $b_1\in\cA_{s_1}^t,\ldots,b_n\in \cA^t_{s_n}$, 
for some $s_1,\ldots,s_n\in S_\cA$, and arbitrary $h_1\in\cH_{\tau(s_1)},\ldots,h_n\in\cH_{\tau(s_n)}$, we have
\begin{align*}
0 & \leq \sum_{i,j=1}^n \langle T(b_i^*b^*bb_j)h_j,h_i\rangle_{\cH_{\tau(s_i)}} \\
& =  \sum_{i,j=1}^n \langle T(b_i^*(e_t-a^*a)b_j)h_j,h_i\rangle_{\cH_{\tau(s_i)}} \\
& =  \sum_{i,j=1}^n \langle T(b_i^*b_j)h_j,h_i\rangle_{\cH_{\tau(s_i)}}-
\sum_{i,j=1}^n \langle T(b_i^*a^*ab_j)h_j,h_i\rangle_{\cH_{\tau(s_i)}}.
\end{align*}
This shows that the condition (b) in Corollary~\ref{c:stinespringa} holds with $C(a)=1$ 
and hence $T$ has a dilation $(\bK;\Phi;\bV)$.

The last statements on orthogonality, minimality, and uniqueness of the dilation with these properties are
consequences of Theorem~\ref{t:nagy} as well.

(3)$\Ra$(1). If $(\bK;\Phi;\bV)$ is a dilation of $T$ then, for any $n\in \NN$, the triple $(\bK^n;\Phi^{(n)};\bV^n)$ 
is a dilation of the $n$-th amplification $T^{(n)}$, where 
\begin{equation}\label{e:kan}
\bK^n:=\{\cK_{x_1}\oplus\cdots\oplus\cK_{x_n}\}_{(x_1,\ldots,x_n)\in X^n}\end{equation} 
$\Phi^{(n)}$ is the
$n$-fold amplification of $\Phi$ as in \eqref{e:tena}, and 
$\bV^n=\{V_\bs^{(n)}\}_{\bs\in S^n}$, where
\begin{equation*} V_\bs^{(n)}=\diag(V_{s_1},\ldots,V_{s_n})=\begin{bmatrix} V_{s_1} & 0 & \cdots & 0 \\
0 & V_{s_2} & \cdots  & 0 \\ \vdots & \vdots & \cdots & \vdots \\
0 & 0 & \cdots & V_{s_n}
\end{bmatrix},\quad \bs=(s_1,s_1,\ldots,s_n)\in S^n.
\end{equation*}
Then, for any $\bs=(s_1,s_2,\ldots,s_n)\in S_\cA^n$, any  $\bt=(t_1,t_2,\ldots,t_n)\in S_\cA^n$, and any $X\in{}^n\!\!\cA_\bs^\bt$, we have
\begin{equation*}
T^{(n)}(X^*X)=V_{\bs}^{(n)*} \Phi^{(n)}(X^*X)V_{\bs}^{(n)} =
V_{\bs}^{(n)*} \Phi(X)^{(n)*}\Phi^{(n)}(X)V_{\bs}^{(n)}\geq 0,
\end{equation*}
hence $T$ is completely positive.
\end{proof}

\begin{remark} The implications (1)$\Ra$(2) and (3)$\Ra$(1) in Theorem~\ref{t:stinespring} 
hold true for the general case of a $*$-algebroid $\cA$. This is easily observed by an inspection of the proofs. 
Only the implication (2)$\Ra$(3) is problematic 
in the general setting.
\end{remark}

\subsection{$C^*$-Algebroids.}\label{s:fc}
The linearisations to $*$-algebroids become of more interest for the special case of 
$C^*$-algebroids but, in this special case some obstructions show up. In the previous 
subsection, we somehow circumvented the main obstruction for the more general case 
of $B^*$-algebroids.
Once the concept of a $B^*$-algebroid is defined, it is rather natural to consider 
$C^*$-algebroids: the model is taken from Example~\ref{ex:segah} and 
Remark~\ref{r:beha}. In 
this subsection we tackle some natural questions related to $C^*$-algebroids, as 
applications of Theorem~\ref{t:stinespring}.

\begin{definition} 
If $\cA$ is a Banach algebroid, with its system of submultiplicative norms $\{\|\cdot\|_{s,t}\mid s,t\in S_\cA\}$, and with 
an involution $*$ such that $\|a^*a\|_{s,s}=\|a\|_{s,t}^2$ for all $s,t\in S$ and all
$a\in\cA_s^t$, we call $\cA$ a \emph{$C^*$-algebroid}. Note that, in this case, for each $s\in S_\cA$, the isotropy
algebra $\cA_s^s$ is a $C^*$-algebra.
\end{definition}

Clearly any $C^*$-algebroid is a $B^*$-algebroid.

 \begin{example} With notation and assumptions as in Example~\ref{ex:segah}, letting
 \begin{equation*}
\cB(\bH)=\bigsqcup_{s,t\in S} \cB(\cH_s,\cH_t),
\end{equation*}
we have a $C^*$-algebroid with unit.
\end{example}

\begin{remark} Let $\cA$ be a $C^*$-algebroid. If $B$ is a nonempty subset of $\cA$ then the intersection of 
all $C^*$-subalgebroids of $\cA$ that contain $B$ is a $C^*$-subalgebroid, that we can call the $C^*$-subalgebroid 
of $\cA$ generated by $B$. In particular, let $\Gamma$ be a semigroupoid and $\tau\colon S_\Gamma\ra X$ be an 
aggregation map. Recalling the definition of the aggregated left regular representation $(\tau;L)$ as in 
Example~\ref{ex:lereg} and under the additional assumption \eqref{e:lin}, 
we can consider the $C^*$-subalgebroid generated by the set $\{L(\gamma)\mid \gamma\in\Gamma\}$ 
in the $C^*$-algebroid $\cB(\ell^2_\tau(\Gamma))$. 
If we 
consider a directed graph $G$, the free semigroupoid $\FF^+(G)$, see Example~\ref{ex:fs}, and the aggregation map 
$\tau$ is a singleton range map, hence we have full aggregation, 
then the $C^*$-algebroid generated by $\{L(\gamma)\mid \gamma\in \FF^+(G)\}$ is the 
graph $C^*$-algebra of $G$, see \cite{KribsPower}.
\end{remark}

As in Definition~\ref{d:posa}, 
given a $C^*$-algebroid $\cA$ and $s\in S_\cA$, if $a\in \cA_s^s$ then it is called \emph{positive}
if $a=x^*x$ for some $x\in \cA_s^t$, and in this case we write $a\geq 0$. This definition is needed in view of the 
concept of positive semidefiniteness 
as in Definition~\ref{d:psm}. However, the cone of positive elements in the isotropy
$C^*$-algebra, with the classical definition, $a\in \cA_s^s$ is positive if $a=x^*x$ for some $x\in \cA_s^s$, may be 
smaller than what we have here. So, a natural question is: \medskip

\textbf{Question 1.} \emph{Are the two concepts of positivity in a $C^*$-algebroid equivalent?} 
\medskip

A positive answer to this question would imply that, for the case of a $C^*$-algebroid $\cA$  with unit, the 
proof of the implication (2)$\Ra$(3) in Theorem~\ref{t:stinespring} can be obtained by the classical fact that in any 
$C^*$-algebra the existence of the square root for positive elements is guaranteed. 

Also, with notation as in Definition~\ref{d:ampl}, let $\cA$ be a $C^*$-algebroid and for an arbitrary $n\in\NN$, 
let ${}^n\!\!\cA$ be the $n$-fold amplification of $\cA$, which is a $*$-algebroid. Clearly, for any $n\in\NN$ and any
$\bs,\bt\in S_{{}^n\!\!\cA}$, for each $j,k=1,\ldots,n$, there is a canonical embedding of $\cA_{s_j}^{t_k}$ into
${}^n\!\!\cA_\bs^\bt$ and, with respect to this embedding, we have
\begin{equation}\label{e:nasat}
{}^n\!\!\cA_\bs^\bt=\bigoplus_{j=1}^n\bigoplus_{k=1}^n \cA_{s_j}^{t_k}.
\end{equation}

\textbf{Question 2.} \emph{Is there a system of submultiplicative norms on ${}^n\!\!\cA$, that extend the 
submultiplicative norms of $\cA$ with respect to the embedding \eqref{e:nasat}, 
with respect to which ${}^n\!\!\cA$ becomes a 
$C^*$-algebroid?}\medskip

 In view of the Question~1, a positive answer to Question 2 would imply that the cone of 
positive elements is known and hence the concept of complete positivity is better understood. Let us recall that for
the case of a $C^*$-algebra, what we denote by ${}^n\!\!\cA$ is $M_n\otimes \cA=M_n(\cA)$, which 
is a $C^*$-algebra, 
but the proof of this fact either goes through tensor products of $C^*$-algebras or by the Gelfand-Naimark Theorem.
Consequently, a Gelfand-Naimark Theorem for $C^*$-algebroids would be also of interest. \medskip

\textbf{Question 3.} \emph{Is it true that for any
$C^*$-algebroid $\cA$ with unit there exists a bundle of Hilbert spaces $\bH=\{\cH_s\}_{s\in S_\cA}$ and
a faithful $*$-representation of $\cA$ on $\bH$?}\medskip


\begin{definition} 
If $\cA$ and $\cB$ are normed algebroids and $(\phi;\Phi)$ is an algebroid morphism from $\cA$ to $\cB$, 
see Definition~\ref{d:repa}, we call it 
\emph{bounded} if for every $s,t\in S_\cA$ the linear map $\Phi|\cA_s^t\colon \cA_s^t\ra \cB_{\phi(s)}^{\phi(t)}$ is bounded.

Recall that, see Definition~\ref{d:repa}, if $\cA$ and $\cB$ are $*$-algebroids, the algebroid morphism 
$(\phi;\Phi)$ from $\cA$ to $\cB$ is called a
\emph{$*$-morphism} if $\Phi(a^*)=\Phi(a)^*$ for all $a\in \cA$. Also, recall that, see Definition~\ref{d:smorph},
an algebroid morphism from an algebroid $\cA$ to the algebroid $\cB(\bH)$, for some bundle $\bH$ of Hilbert 
spaces, see Example~\ref{ex:bh}, is called a \emph{representation} of $\cA$ on $\bH$. 
In case $\cA$ is a $*$-algebroid, an algebroid $*$-morphism from $\cA$ to $\cB(\bH)$ is called a 
\emph{$*$-representation} of $\cA$ on $\bH$. 
\end{definition}

The rigidity of $C^*$-algebra morphisms holds for $C^*$-algebroids as well.

\begin{proposition}\label{p:rig} If $\cA$ and $\cB$ are $C^*$-algebroids and $(\phi;\Phi)$ is an algebroid 
$*$-morphism from $\cA$ to $\cB$,
then $(\phi;\Phi)$ is contractive, that is, for any $s,t\in S_\cA$ we have
\begin{equation*}
\|\Phi(a)\|_{\phi(s),\phi(t)} \leq \|a\|_{s,t},\quad a\in \cA_s^t,
\end{equation*}
in particular, it is a bounded algebroid morphism.

In addition, if the algebroid $*$-morphism $(\phi;\Phi)$ is injective then it is isometric, that is, 
for any $s,t\in S_\cA$ we have
\begin{equation*}
\|\Phi(a)\|_{\phi(s),\phi(t)} = \|a\|_{s,t},\quad a\in \cA_s^t,
\end{equation*}
\end{proposition}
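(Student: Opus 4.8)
The plan is to reduce the statement to the classical rigidity of $*$-homomorphisms between $C^*$-algebras, applied fibrewise to the isotropy algebras. First I would fix $s,t\in S_\cA$ and $a\in\cA_s^t$. Since $\cA$ is a $C^*$-algebroid we have $\|a\|_{s,t}^2=\|a^*a\|_{s,s}$ with $a^*a\in\cA_s^s$, and since $(\phi;\Phi)$ is a $*$-morphism, $\Phi(a)^*\Phi(a)=\Phi(a^*)\Phi(a)=\Phi(a^*a)$; using the $C^*$-condition in $\cB$ this gives $\|\Phi(a)\|_{\phi(s),\phi(t)}^2=\|\Phi(a)^*\Phi(a)\|_{\phi(s),\phi(s)}=\|\Phi(a^*a)\|_{\phi(s),\phi(s)}$. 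Thus it suffices to understand the norm of $\Phi$ on each isotropy algebra $\cA_s^s$.

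Second, I would check that the restriction $\Phi|_{\cA_s^s}\colon\cA_s^s\to\cB_{\phi(s)}^{\phi(s)}$ is a $*$-homomorphism of $C^*$-algebras in the usual sense: by (SM2) it maps $\cA_s^s$ into the single isotropy algebra $\cB_{\phi(s)}^{\phi(s)}$, by (SM3) it is multiplicative (any two elements of $\cA_s^s$ being composable), by (am2) it is linear, and by (am3) it intertwines the involutions. It need not be unital, since $\Phi(e_s)$ is merely a selfadjoint idempotent in $\cB_{\phi(s)}^{\phi(s)}$, but this plays no role. The classical fact that every $*$-homomorphism between $C^*$-algebras is norm-decreasing (a standard spectral-radius argument on selfadjoint elements, passing to unitizations if necessary) then gives $\|\Phi(a^*a)\|_{\phi(s),\phi(s)}\le\|a^*a\|_{s,s}$; combining this with the two $C^*$-identities above yields $\|\Phi(a)\|_{\phi(s),\phi(t)}\le\|a\|_{s,t}$, i.e.\ $(\phi;\Phi)$ is contractive, hence a bounded algebroid morphism.

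For the final assertion, suppose $(\phi;\Phi)$ is injective; then in particular $\Phi|_{\cA_s^s}$ is an injective $*$-homomorphism between $C^*$-algebras, hence isometric (its kernel is a closed two-sided $*$-ideal, here trivial, and the induced $*$-homomorphism on the quotient is always isometric). Consequently $\|\Phi(a^*a)\|_{\phi(s),\phi(s)}=\|a^*a\|_{s,s}$, and the same chain of $C^*$-identities gives $\|\Phi(a)\|_{\phi(s),\phi(t)}=\|a\|_{s,t}$. I do not expect a genuine obstacle in this argument; the only point requiring a moment of care is verifying that $\Phi|_{\cA_s^s}$ is an honest $C^*$-algebra $*$-homomorphism landing inside the single isotropy algebra $\cB_{\phi(s)}^{\phi(s)}$, after which everything is a direct transcription of the standard $C^*$-algebra rigidity theorems.
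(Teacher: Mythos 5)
Your proposal is correct and follows essentially the same route as the paper: reduce via the $C^*$-identities $\|\Phi(a)\|^2=\|\Phi(a^*a)\|$ and $\|a\|^2=\|a^*a\|$ to the restriction $\Phi|_{\cA_s^s}$, and then invoke the classical facts that $*$-homomorphisms of $C^*$-algebras are contractive and, when injective, isometric. The only difference is that you spell out the (straightforward) verification that $\Phi|_{\cA_s^s}$ is an honest $C^*$-algebra $*$-homomorphism into $\cB_{\phi(s)}^{\phi(s)}$, which the paper leaves implicit.
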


\begin{proof} Taking into account that any $*$-morphism of $C^*$-algebras is contractive, we have
\begin{align*}
\|\Phi(a)\|_{\phi(s),\phi(t)}^2 & = \|\Phi(a)^*\Phi(a)\|_{\phi(s),\phi(s)} = \|\Phi(a^*a)\|_{\phi(s),\phi(s)}
\leq \|a^* a\|_{s,s}=\|a\|_{s,t}^2.
\end{align*}

The latter statement follows since any injective $*$-morphism of $C^*$-algebras is isometric, hence the inequality
becomes an equality.
\end{proof}

\begin{definition} Let $\cA$ be a $*$-algebroid. A map $\omega\colon \cA\ra \CC$ is called a \emph{linear form}, or
simply, a \emph{form}, if for any $s,t\in S_\cA$ the restriction of $\omega$ to the vector space $\cA_s^t$ is linear.
The form $\omega$ is called \emph{positive} if $\omega(x^*x)\geq 0$ for all $x\in\cA$.
\end{definition}

\begin{proposition}\label{p:form}
If $\cA$ is a $C^*$-algebroid with unit and $\omega\colon \cA\ra \CC$ is a positive form, then the following
assertions hold true.
\begin{itemize}
\item[(i)] $\omega$ is positive semidefinite.
\item[(ii)] $\omega$ is completely positive.
\item[(iii)] There exists a triple $(\cH_\omega;\xi_\omega;\Phi_\omega)$ subject to the following conditions.
\begin{itemize}
\item[(1)] $\cH_\omega$ is a Hilbert space, $\xi_\omega=\{\xi_{\omega,s}\}_{s\in S_\cA}$ is a bundle of vectors in 
$\cH_\omega$, and $\Phi_\omega\colon \cA\ra \cB(\cH_\omega)$ 
is a fully aggregated $*$-representation of $\cA$.
\item[(2)] $\omega(a)=\langle\Phi_\omega(a)\xi_{\omega,d(a)},\xi_{\omega,c(a)}\rangle_{\cH_\omega}$ 
for all $a\in \cA$.
\item[(3)] For any $a,b\in \cA$ such that $c(a)\neq c(b)$ we have 
$\Phi_\omega(a)\cH_\omega\perp \Phi_\omega(b)\cH_\omega$.
\item[(4)] $\cH_\omega$ is the closed linear span of $\{\Phi_\omega(a)\xi_{\omega,d(a)}\mid a\in \cA\}$.
\end{itemize}
\item[(iv)] $\omega$ is bounded in the sense that, for each $s,t\in S_\cA$, the restriction of $\omega$ to the normed 
space $(\cA_s^t,\|\cdot\|_{s,t})$ is bounded.
\end{itemize}
\end{proposition}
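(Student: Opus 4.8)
The plan is to reduce the whole statement to Theorem~\ref{t:stinespring} by regarding the scalar form $\omega$ as an operator valued map. I would take the one–point set $X=\{*\}$, put $\cH_*:=\CC$, and let $\tau\colon S_\cA\ra X$ be the (unique, fully aggregating) map; since $\cB(\cH_*,\cH_*)=\CC$, the form $\omega$ is precisely a $\tau$-coherent map $T:=\omega\colon\cA\ra\cB(\bH)$ with $\bH=\{\cH_*\}$. Once $T$ is known to be Hermitian and positive semidefinite, Theorem~\ref{t:stinespring} (applicable because a $C^*$-algebroid with unit is in particular a $B^*$-algebroid with unit, and linearity of $T$ forces linearity of the dilating representation as in the proof of Theorem~\ref{t:stinespringu}) produces an orthogonal minimal dilation $(\bK;\bV;\Phi)$, unique up to unitary equivalence. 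Because $X$ is a singleton, $\bK$ is a single Hilbert space $\cH_\omega$, $\Phi=:\Phi_\omega\colon\cA\ra\cB(\cH_\omega)$ is a fully aggregated $*$-representation, and $\bV=\{V_s\}_{s\in S_\cA}$ with $V_s\in\cB(\CC,\cH_\omega)$; setting $\xi_{\omega,s}:=V_s1$, the dilation identity $\omega(a)=V_{c(a)}^*\Phi_\omega(a)V_{d(a)}$ unfolds to $\omega(a)=\langle\Phi_\omega(a)\xi_{\omega,d(a)},\xi_{\omega,c(a)}\rangle_{\cH_\omega}$, which is condition (2) of (iii); orthogonality of the dilation in the sense of Definition~\ref{d:orthogonal} (here $\tau$ is constant, so the hypothesis reduces to $c(a)\neq c(b)$) is condition (3); and minimality in the sense of Definition~\ref{d:minbd} is condition (4). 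This settles (iii). Then (ii), complete positivity of $\omega$, follows from the implication $(2)\Ra(1)$ of Theorem~\ref{t:stinespring}; alternatively $\omega^{(n)}$ is dilated by the $n$-fold amplification of $(\bK;\bV;\Phi_\omega)$, so $\omega^{(n)}(X^*X)\geq0$ for all $X$.

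It remains to establish, for the given positive form $\omega$, that it is positive semidefinite in the sense of Definition~\ref{d:psm}; this is assertion (i), and together with the Hermitian property it is the only non-bookkeeping input. I expect this to be the main obstacle. The natural device is a GNS-type construction: for each $s\in S_\cA$ form the complex vector space $\cF_s:=\bigoplus_{u\in S_\cA}\cA_u^s$, equip it with the sesquilinear pairing $\langle a,b\rangle_s:=\omega(b^*a)$ (well defined since $b^*a$ lies in the fibre $\cA_{d(a)}^{d(b)}$ whenever $a,b$ have codomain $s$), and prove that $\langle\cdot,\cdot\rangle_s$ is positive semidefinite, which is exactly the inequality $\sum_{i,j}\omega(\alpha_i^*\alpha_j)h_j\overline{h_i}\geq0$ of Definition~\ref{d:psm}; the Hermitian property $\omega(a^*)=\overline{\omega(a)}$ then comes for free from the $2$-positive semidefinite case by the Remark following Definition~\ref{d:psm}. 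To prove positivity of $\langle\cdot,\cdot\rangle_s$ I would imitate the mechanism used in the proof of Theorem~\ref{t:stinespring}: for $\alpha_i\in\cA_{u_i}^s$ one has $\alpha_i^*\alpha_j\in\cA_{u_j}^{u_i}$, and the obvious identity $\sum_{i,j}h_j\overline{h_i}\,\omega(\alpha_i^*\alpha_j)=\omega\bigl((\sum_i h_i\alpha_i)^*(\sum_j h_j\alpha_j)\bigr)$ is unavailable, because the $\alpha_i$ live in distinct fibres and cannot be added. Bridging this gap must use the $C^*$-structure of $\cA$ in an essential way — completeness and the $C^*$-identity, hence the holomorphic functional calculus and the domination $x^*x\leq\|x\|^2\epsilon_u$ in the isotropy $C^*$-algebras $\cA_u^u$ — to dominate and reorganise the mixed terms $\omega(\alpha_i^*\alpha_j)$ into combinations of values of $\omega$ on isotropy algebras, where positivity of $\omega$ applies. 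This is precisely the point where the hypothesis that $\cA$ is a $C^*$-algebroid is needed, in the same way the boundedness half of Theorem~\ref{t:nagy} powers $(2)\Ra(3)$ of Theorem~\ref{t:stinespring}.

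Finally, (iv) is a short consequence of the $n=2$ case of (i): applying it with $\alpha_1=\epsilon_{c(a)}$ and $\alpha_2=a$, both of codomain $c(a)$, gives the Cauchy--Schwarz inequality $|\omega(a)|^2=|\omega(\epsilon_{c(a)}^*a)|^2\leq\omega(\epsilon_{c(a)})\,\omega(a^*a)$; since $a^*a\leq\|a\|_{d(a),c(a)}^2\,\epsilon_{d(a)}$ in the $C^*$-algebra $\cA_{d(a)}^{d(a)}$ while $\omega$ is order preserving there, $\omega(a^*a)\leq\|a\|_{d(a),c(a)}^2\,\omega(\epsilon_{d(a)})$, so $|\omega(a)|\leq\bigl(\omega(\epsilon_{c(a)})\,\omega(\epsilon_{d(a)})\bigr)^{1/2}\,\|a\|_{d(a),c(a)}$; hence $\omega$ is bounded on each normed fibre $(\cA_s^t,\|\cdot\|_{s,t})$. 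In summary, the only delicate step is proving positive semidefiniteness of $\omega$ directly from its fibrewise positivity; all the rest is an application of Theorem~\ref{t:stinespring} and routine estimates.
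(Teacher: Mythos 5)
Your reduction of (ii) and (iii) to Theorem~\ref{t:stinespring} with $X$ a singleton and $\cH_*=\CC$ is exactly how the paper proceeds, and your Cauchy--Schwarz derivation of (iv) is a legitimate alternative to the paper's route (which gets (iv) from Proposition~\ref{p:rig} and the cyclicity condition (4)); but everything hinges on assertion (i), which you explicitly leave unproved, and the bridge you sketch for it cannot work. Positivity of the form constrains $\omega$ only on elements of the form $x^*x$, and all such elements lie in isotropy fibres $\cA_u^u$; hence no domination of the type $x^*x\le\|x\|^2\epsilon_u$, nor any holomorphic functional calculus in the isotropy algebras, gives any control over the values of $\omega$ on the off-isotropy fibres where the mixed terms $\alpha_i^*\alpha_j$ with $d(\alpha_i)\neq d(\alpha_j)$ live. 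Concretely, in the $C^*$-algebroid of $2\times 2$ matrix units ($S_\cA=\{1,2\}$, $\cA_i^j=\CC e_{ji}$), any form vanishing on $\cA_1^1$ and $\cA_2^2$ and taking arbitrary values on $\cA_2^1$ and $\cA_1^2$ is a positive form, while positive semidefiniteness applied to $\alpha_1=e_{11}$, $\alpha_2=e_{12}$ demands $\omega(e_{21})=\ol{\omega(e_{12})}$ and $|\omega(e_{12})|^2\le\omega(e_{11})\,\omega(e_{22})$; no estimate involving only norms and values of $\omega$ on the isotropy algebras can yield that. So your proposed strategy is a dead end, not merely an unfinished computation.

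What the paper actually does for (i) is the opposite of what you anticipated: no $C^*$-machinery at all, just the one-line identity $\sum_{i,j}\ol{\lambda_i}\lambda_j\,\omega(a_i^*a_j)=\omega\bigl(\bigl(\sum_i\lambda_ia_i\bigr)^*\bigl(\sum_i\lambda_ia_i\bigr)\bigr)\ge 0$ --- precisely the identity you set aside --- with the elements $\lambda_ia_i$ summed inside $\cA$. Your observation that this summation is problematic when the $a_i$ have a common codomain but different domains is pertinent (and in the situation where the proposition is put to use, in the proof of Theorem~\ref{t:gn}, the forms vanish off the fibres with domain and codomain a fixed $s$, so the only nonvanishing contributions come from elements of a single fibre $\cA_s^t$, where the identity is legitimate and the argument goes through); but having noticed the difficulty, you neither carry out the paper's argument in the case where it applies nor supply any substitute. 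Thus assertion (i) --- which you yourself call the only non-bookkeeping input, and on which your invocation of Theorem~\ref{t:stinespring} (via the Hermitian property) and your proof of (iv) both rest --- is simply not established. This is a genuine gap at the heart of the proposition.
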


\begin{proof} (i) Let $n\in\NN$, $a_1,\ldots,a_n\in \cA$, and $\lambda_1,\ldots,\lambda_n\in\CC$ be arbitrary. Then,
\begin{align*} \sum_{i,j=1}^n \omega(a_i^*a_j)\ol\lambda_i\lambda_j 
& =  \sum_{i,j=1}^n \omega((\lambda_i a_i)^*\lambda_ja_j) \\
& = \omega\Bigl(\sum_{i,j=1}^n (\lambda_j a_j)^* \lambda_ia_i\Bigr)
= \omega\Bigl(\bigl(\sum_{j=1}^n \lambda_j a_j\bigr)^*
\bigl(\sum_{i=1}^n \lambda_ia_i\bigr)\Bigr) \geq 0,
\end{align*}
hence $\omega$ is positive semidefinite. 

(ii) This assertion follows from Theorem~\ref{t:stinespring}.

(iii) This assertion follows from Theorem~\ref{t:stinespring} since the triple $(\cH_\omega,\xi_\omega,\Phi_\omega)$
is simply an orthogonal minimal dilation of $\omega$.

(iv) In view of Proposition~\ref{p:rig}, the $*$-representation $\Phi_\omega$ is bounded and hence, by (4) it follows
that $\omega$ is bounded as well.
\end{proof}

In the following theorem we show that Question 1, Question 2, and Question 3 are equivalent.

\begin{theorem}\label{t:gn} Let $\cA$ be a $C^*$-algebroid with unit. 
The following assertions are equivalent.

\nr{1} For any $s\in S_\cA$ and any $x\in\cA_s$ there exists $a\in \cA_s^s$ such that $x^*x=a^*a$.

\nr{2} There exists a bundle of Hilbert spaces 
$\bH=\{\cH_s\}_{s\in S_\cA}$ and an injective $*$-representation of $\cA$ on $\bH$, with the aggregation map
the identity map on $S_\cA$.

\nr{3} For each $n\in \NN$ there exists 
a system of submultiplicative norms on ${}^n\!\!\cA$, that extends the system of 
submultiplicative norms of $\cA$ with respect to the embedding \eqref{e:nasat}, 
with respect to which ${}^n\!\!\cA$ becomes a 
$C^*$-algebroid.

\nr{3'} There exists 
a system of submultiplicative norms on ${}^2\!\!\cA$, that extends the system of 
submultiplicative norms of $\cA$ with respect to the embedding \eqref{e:nasat}, 
with respect to which ${}^2\!\!\cA$ becomes a 
$C^*$-algebroid.
\end{theorem}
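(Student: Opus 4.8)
The plan is to prove the chain of implications $(1)\Ra(2)\Ra(3)\Ra(3')\Ra(1)$, with the heart of the matter being $(1)\Ra(2)$, which is where a Gelfand--Naimark-type construction for $C^*$-algebroids must be carried out, and $(3')\Ra(1)$, which extracts the norm identity back down to the fibres.

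First I would prove $(1)\Ra(2)$. The idea is to mimic the classical GNS route but in the bundled setting. Assuming (1), for every $s\in S_\cA$ the cone of "partially positive" elements of $\cA_s^s$ coincides with the cone of positive elements of the isotropy $C^*$-algebra $\cA_s^s$ in the usual sense. In particular, for every $s\in S_\cA$ and every state (or, more efficiently, every pure state) $\omega$ on the isotropy $C^*$-algebra $\cA_s^s$, extend $\omega$ to a positive form on $\cA$ — this can be done by the Hahn--Banach-type extension available for $C^*$-algebroids with unit, or more directly by composing with a conditional-expectation-type map onto $\cA_s^s$; then apply Proposition~\ref{p:form}~(iii) to obtain a triple $(\cH_\omega;\xi_\omega;\Phi_\omega)$, a fully aggregated $*$-representation of $\cA$. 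Taking the direct sum over $s\in S_\cA$ and over a separating family of states on each $\cA_s^s$, one obtains a $*$-representation $\Phi=\bigoplus_\omega \Phi_\omega$ on a bundle of Hilbert spaces $\bH=\{\cH_s\}_{s\in S_\cA}$; here the fibre $\cH_s$ is the Hilbert-space summand corresponding to the support projection $\Phi(e_s)$ (using property~(3) of Proposition~\ref{p:form} to see that the pieces aggregate along the units). Injectivity follows because, for $a\in\cA_s^t$, one has $\|a\|_{s,t}^2=\|a^*a\|_{s,s}$ by the $C^*$-axiom, and $\|a^*a\|_{s,s}$ is recovered by the separating family of states on $\cA_s^s$ via $\omega(a^*a)=\langle\Phi_\omega(a)\xi_{\omega,s},\Phi_\omega(a)\xi_{\omega,s}\rangle$; thus $\Phi$ is even isometric on each fibre. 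The delicate point is to verify that the resulting $\Phi$ is a genuine algebroid $*$-morphism with aggregation map the identity (not merely fibrewise), which is precisely where hypothesis (1) is used to guarantee that positivity is detected fibrewise and the cross-fibre products $a^*b$ with $a\in\cA_s^t$, $b\in\cA_s^u$ are handled correctly.

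Next, $(2)\Ra(3)$: given an injective (hence isometric, by Proposition~\ref{p:rig}) $*$-representation $\Phi\colon\cA\ra\cB(\bH)$ with identity aggregation, form for each $n\in\NN$ the $n$-fold amplification $\Phi^{(n)}\colon{}^n\!\!\cA\ra{}^n\cB(\bH)=\cB(\bH^n)$, where $\bH^n$ is as in \eqref{e:kan}; this is again an injective $*$-morphism of algebroids. Since $\cB(\bH^n)$ is a $C^*$-algebroid, one pulls back its system of submultiplicative norms along $\Phi^{(n)}$, setting $\|A\|_{\bs,\bt}:=\|\Phi^{(n)}(A)\|$ for $A\in{}^n\!\!\cA_\bs^\bt$. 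Injectivity makes this a genuine norm; the $C^*$-identity and submultiplicativity are inherited from $\cB(\bH^n)$; and the norm extends the original one on $\cA$ via the embedding \eqref{e:nasat} because $\Phi$ is isometric on fibres. Thus ${}^n\!\!\cA$ becomes a $C^*$-algebroid. The implication $(3)\Ra(3')$ is trivial (specialise $n=2$).

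Finally, $(3')\Ra(1)$: suppose ${}^2\!\!\cA$ carries a $C^*$-algebroid norm extending that of $\cA$. Fix $s\in S_\cA$ and $x\in\cA_s^t$ for some $t\in S_\cA$. Consider inside ${}^2\!\!\cA$, with $\bs=(s,s)$ and $\bt=(t,s)$, the matrix
\begin{equation*}
X=\begin{bmatrix} x & 0 \\ 0 & 0 \end{bmatrix}\in{}^2\!\!\cA_\bs^\bt,\qquad
X^*X=\begin{bmatrix} x^*x & 0 \\ 0 & 0 \end{bmatrix}\in{}^2\!\!\cA_\bs^\bs.
\end{equation*}
In the $C^*$-algebra $({}^2\!\!\cA)_\bs^\bs$ the positive element $X^*X$ has a square root; call it $B=\begin{bmatrix} b_{11} & b_{12} \\ b_{21} & b_{22}\end{bmatrix}\in({}^2\!\!\cA)_\bs^\bs$ with $B=B^*\geq0$ and $B^2=X^*X$. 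Here $b_{11}\in\cA_s^s$, $b_{12}\in\cA_s^s$ (this uses $\bs=(s,s)$, so both column-indices are $s$), and comparing $(1,1)$-entries of $B^2$ gives $b_{11}^2+b_{12}b_{21}=x^*x$, while selfadjointness $B=B^*$ forces $b_{21}=b_{12}^*$, so $x^*x=b_{11}^*b_{11}+b_{12}^*b_{12}$ — a sum of two terms $a^*a$ with $a\in\cA_s^s$, which, since the cone generated by such sums is exactly $\cA_s^{s,+}$ in the isotropy $C^*$-algebra and square roots there are available, collapses to a single $a\in\cA_s^s$ with $x^*x=a^*a$. This establishes (1).

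The main obstacle is the implication $(1)\Ra(2)$: producing enough states on the isotropy $C^*$-algebras $\cA_s^s$, extending them coherently to positive forms on all of $\cA$, and — most importantly — checking that the resulting direct-sum representation respects the algebroid multiplication across fibres and has the identity as its aggregation map, rather than collapsing pieces together. The bundled Hahn--Banach extension and the verification that Proposition~\ref{p:form}'s orthogonality property \eqref{e:pebe}-type conditions assemble the $\cH_\omega$ correctly along units are the technical crux; everything else is a direct transcription of the $C^*$-algebra arguments.
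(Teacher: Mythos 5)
Your chain of implications $(1)\Ra(2)\Ra(3)\Ra(3')\Ra(1)$ is the same as the paper's, and your $(2)\Ra(3)$ and $(3)\Ra(3')$ agree with it, but the two endpoints of the cycle have genuine problems. The step $(3')\Ra(1)$ as written is circular: you place $X$ in ${}^2\!\!\cA_{(s,s)}^{(t,s)}$, so $X$ does \emph{not} belong to the isotropy algebra $({}^2\!\!\cA)_{(s,s)}^{(s,s)}$, and hence $X^*X$ is only known to be positive in the algebroid sense of Definition~\ref{d:posa}; whether such an element is positive in the intrinsic $C^*$-sense of the isotropy algebra -- which is what you need in order to take the square root $B$ -- is precisely assertion (1) (Question~1), transported one level up. The paper avoids this by a different computation: write $x^*x=a_+-a_-$ in the $C^*$-algebra $\cA_s^s$ with $a_\pm\geq 0$, $a_+a_-=0$, replace $x$ by $xa_-$ to reduce to the case $x^*x=-a_-$, form $A=\left[\begin{smallmatrix} a_-^{1/2} & 0\\ x & 0\end{smallmatrix}\right]\in{}^2\!\!\cA$, compute $A^*A=0$ directly, and use the $C^*$-norm identity of ${}^2\!\!\cA$ to conclude $A=0$, hence $a_-=0$. (Your matrix idea can be repaired by instead taking $\bs=(s,t)$ and putting $x$ as the $(2,1)$ entry of an element of the isotropy $C^*$-algebra $({}^2\!\!\cA)_{(s,t)}^{(s,t)}$, then using spectral permanence for the corner copy of $\cA_s^s$, whose norm is the restricted one by the extension hypothesis; but with your choice of indices the argument begs the question.)

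In $(1)\Ra(2)$, the extension of states on $\cA_s^s$ to positive forms on $\cA$ cannot be obtained by a ``Hahn--Banach-type extension for $C^*$-algebroids'' or a ``conditional-expectation-type map'': neither exists in the paper's framework. The device actually used is the zero-extension, $\omega|\cA_u^v=0$ unless $u=v=s$, and its positivity is exactly where hypothesis (1) enters: for $b\in\cA_s^t$ one has $b^*b=a^*a$ with $a\in\cA_s^s$, so $\omega(b^*b)\geq 0$ follows from the state property on the isotropy algebra. Moreover the paper does not use an abstract separating family: for each nontrivial $x$ it manufactures a specific state $\omega_x$ on $\cA_s^s$ with $\omega_x(x^*x)\neq 0$ from a character of the commutative $C^*$-algebra generated by $x^*x$ and $e_s$, which is what yields injectivity of the direct sum of the cyclic representations furnished by Proposition~\ref{p:form}; and the identity aggregation is obtained by re-fibring each cyclic representation along the mutually orthogonal projections $\Phi_\omega(e_s)$ as in Remark~\ref{r:cohu} -- precisely the point you flag as the ``technical crux'' but leave unverified. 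So while the skeleton matches, both $(1)\Ra(2)$ and $(3')\Ra(1)$ need these repairs before the proof is complete.
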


\begin{proof} $(1)\Rightarrow (2)$. 
With notation and assumptions as in Proposition~\ref{p:form}, from the orthogonal minimal dilation
$(\bH_\omega;\xi_\omega;\Phi_\omega)$ one can obtain a minimal dilation that is aggregation free, in the sense 
that the aggregation map is one-to-one. Indeed, letting
  $e=\{e_s\}_{s\in S_\cA}$ be the unit of $\cA$, it was observed in Remark~\ref{r:cohu} that
$\Phi_\omega(e_s)$, for $s\in S_\cA$, is a bundle of mutually orthogonal projections in $\cH_\omega$ that sum up 
to the identity. So, letting $\cH_{\omega,s}$ denote the range of $\Phi_\omega(e_s)$, we have a bundle of Hilbert
spaces $\mathbf{H_\omega}=\{\cH_{\omega,s}\}_{s\in S_\cA}$. Then we observe that,
for any $a\in \cA$ the right support of $\Phi_\omega(a)$ is in $\cH_{\omega,d(a)}$ and its left support is
in $\cH_{\omega,c(a)}$, hence we can consider $\Phi_\omega(a)\in\cB(\cH_{\omega,d(a)},\cH_{\omega,c(a)})$.
Taking the aggregation map the identity map on $S_\cA$, in this way we obtain a minimal dilation 
$(\bH_\omega;\xi_\omega;\Phi_\omega)$ of $\omega$ that is aggregation free.
  
Given $\omega\colon \cA\ra\CC$ a positive form on the $C^*$-algebroid $\cA$, we call the triple 
$(\bH_\omega;\xi_\omega;\Phi_\omega)$, defined in the previous remark, 
the \emph{cyclic $*$-representation} induced by $\omega$, 
to be in accordance with the classical concept for $C^*$-algebras.

So, let $s\in S_\cA$ and $x\in\cA_s$  be nontrivial but arbitrary. Since $\|x^*x\|_{s,s}=\|x\|_{s,t}^2$, it follows 
that the selfadjoint element $x^*x$ in the isotropy $C^*$-algebra $\cA_s^s$ is not trivial and hence its spectrum 
$\sigma(x^*x)\neq \{0\}$. Let $\lambda_x\in \sigma(x^*x)\setminus\{0\}$ and let $\cB$ be the commutative 
$C^*$-algebra generated by $x^*x$ and $e_s$. Note that, by the spectral permanence of spectra in unital 
$C^*$-algebras, the spectrum of $x^*x$ is the same with respect to $\cA_s^s$ as to $\cB$. Also, 
see e.g.\ \cite{Arveson} at page 26,
$\sigma(x^*x)=\{\omega(x^*x)\mid \omega\in\mathrm{Sp}(\cB)\}$, where $\mathrm{Sp}(\cB)$ denotes the set of all 
nontrivial characters $\omega\colon \cB\ra\CC$. From here we conclude that
there exists $\omega_x\in\mathrm{Sp}(\cB)$ such that $\omega_x(x^*x)=\lambda_x\neq 0$. Then, extend $\omega_x$ to 
a state on $\cA_s^s$, denoted again by $\omega_x$,  see e.g.\  \cite{Arveson} at page 128. Since for every $t\in S_{\cA}$ and every $b\in \cA_s^t$ there exists $a\in\cA_s^s$ such that $b^*b=a^*a$, it follows that $\omega_x(b^*b)\geq 0$ for all $t\in S_\cA$ and all $b\in\cA_s^t$. This enables us to extend $\omega_x$ to a positive form on $\cA$ in the sense of Definition 6.6, denoted again 
by $\omega_{x}$, such that, for any $u,v\in S_\cA$ such that at least one of $u$ and $v$ is not $s$, we have $\omega_{x}|\cA_u^v=0$.

We consider a collection $\{\omega_j\}_{j\in\cJ}$ of positive forms on $\cA$
such that for each 
$x\in \cA$ nontrivial, there exists $j\in\cJ$ such that $\omega_j=\omega_{x}$.
One can define the orthogonal sum of these $*$-representations in 
natural fashion. For each $s\in S_\cA$, we consider the Hilbert space 
$\cH_s=\bigoplus_{j\in\cJ} \cH_{\omega_j,s}$ and the bundle of Hilbert spaces $\bH=\{\cH_s\}_{s\in S_\cA}$. Then
let the $*$-representation 
\begin{equation*}\Phi=\bigoplus_{j\in\cJ} \Phi_{\omega_j}\colon \cA\ra \cB(\bH),
\end{equation*}
be defined by
\begin{equation*} \Phi(a)=
\bigr(\bigoplus_{j\in\cJ} \Phi_{\omega_j}\bigl)(a)=\bigoplus_{j\in\cJ} \Phi_{\omega_j}(a),\quad a\in\cA.
\end{equation*}
In the following we show that $(\mathrm{id}_{S_\cA};\Phi)$ is an injective $*$-representation of $\cA$ on $\bH$, 
where the aggregation map $\mathrm{id}_{S_\cA}$ is the identity map on $S_\cA$.

To see this, let $x\in \cA_s^t$, for some $s,t\in S_\cA$, be nontrivial, hence $x^*x$ is nontrivial, as explained before. 
Let $j\in \cJ$ be
such that $\omega_j=\omega_{x}$ and consider the cyclic $*$-representation 
$(\bH_{\omega_j};\xi_{\omega_j};\Phi_{\omega,j})$. Since
\begin{equation*}
0\neq \omega_j(x^*x)
=\langle \Phi_{\omega_j}(x^*x)\xi_{\omega_j},\xi_{\omega_j}\rangle_{\cH_{\omega_j}},
\end{equation*}
if follows that $\Phi_{\omega_j}(x)^*\Phi_{\omega_j}(x)=\Phi_{\omega_j}(x^*x)\neq 0$, 
hence $\Phi_{\omega_j}(x)\neq 0$. In view of the definition of $\Phi$ it follows that $\Phi(x)\neq 0$. Assertion (2) is 
proven.

$(2)\Rightarrow (3)$. Indeed, if (2) holds, without loss of generality we can assume that $\cA$ is a 
$C^*$-subalgebroid of a $C^*$-algebroid $\cB(\bH)$, for some bundle of Hilbert spaces 
$\bH=\{\cH_s\}_{s\in S_\cA}$.
Since the elements of $\cA$ are bounded linear operators between 
appropriate Hilbert spaces and, consequently, for arbitrary $n\in\NN$, the elements of ${}^n\!\!\cA$ are matrices with 
entries operators on appropriate Hilbert spaces. Then, on ${}^n\!\!\cA$ we consider the operator norms inherited 
from the operator norms of ${}^n\cB(\bH)$ which is a system of submultiplicative norms on ${}^n\!\!\cA$, that extends 
the system of submultiplicative norms of $\cA$ with respect to the embedding \eqref{e:nasat}, with respect to which 
${}^n\!\!\cA$ becomes a $C^*$-algebroid, hence assertion (3) follows.

$(3)\Rightarrow (3)'$. Clear.

$(3)'\Rightarrow (1)$. Let $x\in\cA_s^t$, for some $s,t\in  S_\cA$, $s\neq t$. 
Since $x^*x$ is a Hermitian element in the isotropy $C^*$-algebra $\cA_s^s$, it follows that $x^*x=a_+-a_-$, with 
$a_\pm$ positive elements in the isotropy $C^*$-algebra $\cA_s^s$ and such that $a_+a_-=0$. We 
show that $a_-=0$ and hence that $x^*x$ is positive in the isotropy $C^*$-algebra $\cA_s^s$. Indeed,
\begin{equation*}(xa_-)^*(xa_-)=a_-x^*xa_-=a_-(a_+-a_-)a_-=-a_-^3,\end{equation*}
hence, replacing $x$ with $xa_-$, we can therefore without loss of generality assume that $a_+=0$, 
hence $x^*x=-a_-$. We consider the element $A\in {}^2\!\!\cA$ defined by 
$A=\begin{bmatrix}
a_-^{\frac{1}{2}} & 0 \\
x & 0 
\end{bmatrix} $ and note that 
\begin{equation*}A^*A=    \begin{bmatrix}
a_-^{\frac{1}{2}} & x^* \\
0 & 0 
\end{bmatrix}
\begin{bmatrix}
a_-^{\frac{1}{2}} & 0 \\
x & 0 
\end{bmatrix} =\begin{bmatrix}
a_-+x^*x & 0 \\
0 & 0 
\end{bmatrix} =0.\end{equation*}
Since ${}^2\!\!\cA$ is a C*-algebroid, this implies $A=0$, hence $a_-=0$. 
\end{proof}

The following result shows that a positive answer to these questions may depend on 
how rich the fibres $\cA_s^t$ are.

\begin{proposition} \label{p:rich}
Let $\cA$ be a $C^*$-algebroid with unit and let $s,t\in S_\cA$ be such that the unit 
$\epsilon_t$ of the isotropy $C^*$-algebra $\cA_t^t$
belongs to the closure of the convex cone generated by the set 
$\{uu^*\mid u\in \cA_s^t\}$. Then, for any $b\in \cA_s^t$ the element $b^*b$ is positive
in the isotropy $C^*$-algebra $\cA_s^s$.
\end{proposition}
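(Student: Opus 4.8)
The plan is to reduce the claim to the elementary $C^*$-algebraic fact that an element which is simultaneously positive and negative must vanish. Fix $b\in\cA_s^t$. Then $b^*b$ is a selfadjoint element of the isotropy $C^*$-algebra $\cA_s^s$, so it has a Jordan decomposition $b^*b=a_+-a_-$ with $a_+,a_-\geq 0$ in $\cA_s^s$ and $a_+a_-=a_-a_+=0$ (both are continuous functions of $b^*b$, hence commute). It is enough to prove that $a_-=0$. To this end I would introduce the twisted element $y:=ba_-$; a routine check of the source and range maps shows $y\in\cA_s^t$, and, using $a_-^*=a_-$ together with $a_-a_+a_-=0$, one computes $y^*y=a_-b^*ba_-=a_-(a_+-a_-)a_-=-a_-^{3}$ in $\cA_s^s$.

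The second step is to use the hypothesis on $\epsilon_t$ to show that $y^*y$ is positive in the $C^*$-algebra $\cA_s^s$. The key observation is that for every $u\in\cA_s^t$ the product $u^*y$ lies in $\cA_s^s$ and $y^*(uu^*)y=(u^*y)^*(u^*y)\geq 0$ there; by linearity, $y^*zy\geq 0$ in $\cA_s^s$ for every $z$ in the convex cone generated by $\{uu^*\mid u\in\cA_s^t\}$. Since $z\mapsto y^*zy$ is a bounded linear map from $\cA_t^t$ to $\cA_s^s$ (submultiplicativity of the algebroid norms) and the positive cone of the $C^*$-algebra $\cA_s^s$ is norm-closed, this inequality persists for every $z$ in the closure of that cone; in particular it holds for $z=\epsilon_t$. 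Because $c(y)=t$, the unit axiom (U2) gives $\epsilon_t y=y$, so $y^*y=y^*\epsilon_t y\geq 0$ in $\cA_s^s$.

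Combining the two computations yields $-a_-^{3}=y^*y\geq 0$, while $a_-\geq 0$ forces $a_-^{3}\geq 0$; hence $a_-^{3}=0$, and therefore $\|a_-\|^{3}=\|a_-^{3}\|=0$, i.e.\ $a_-=0$. Consequently $b^*b=a_+$ is positive in the isotropy $C^*$-algebra $\cA_s^s$, which is the assertion.

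I do not anticipate a serious obstacle. The only points that require attention are the bookkeeping of the domain/range maps (so that each of $ba_-$, $u^*y$, and $y^*(uu^*)y$ is a legitimate product landing in the indicated fibre) and the recognition that the positivity we must establish is positivity in the isotropy $C^*$-algebra $\cA_s^s$, which a priori is strictly stronger than membership in the algebroid cone of Definition~\ref{d:posa}; once the $\epsilon_t$-hypothesis has been used to manufacture the positivity of $y^*y$, the standard $C^*$-algebra facts (Jordan decomposition, closedness of the positive cone, and $\|a_-^{3}\|=\|a_-\|^{3}$ for positive $a_-$) finish the argument.
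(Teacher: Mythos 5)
Your proof is correct: the fibre bookkeeping works out ($y=ba_-\in\cA_s^t$, $u^*y\in\cA_s^s$), the identity $y^*(uu^*)y=(u^*y)^*(u^*y)$ gives positivity in the isotropy $C^*$-algebra, the map $z\mapsto y^*zy$ is bounded by submultiplicativity so the inequality passes to the closure of the cone and hence to $\epsilon_t$, and $-a_-^3\geq 0$ together with $a_-\geq 0$ forces $a_-=0$. The only criticism is that the Jordan decomposition and the twisted element $y=ba_-$ are an unnecessary detour: your second step applies verbatim to any element of $\cA_s^t$, in particular to $b$ itself, and since $\epsilon_t b=b$ it yields $b^*b=b^*\epsilon_t b\geq 0$ in $\cA_s^s$ immediately. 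That shorter route is exactly the paper's proof: for $v=\sum_j u_ju_j^*$ in the cone one has $b^*vb=\sum_j(b^*u_j)(b^*u_j)^*\geq 0$ in $\cA_s^s$, and approximating $\epsilon_t$ by such elements and using continuity of multiplication plus norm-closedness of the positive cone gives $b^*b=\lim_n b^*v_nb\geq 0$. The $-a_-^3$ device you imported (it appears in the proof of the implication (3')$\Rightarrow$(1) of Theorem~\ref{t:gn}) is needed there because no hypothesis like the one on $\epsilon_t$ is available; here it buys nothing, though it does no harm.
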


\begin{proof} The convex cone generated by the set $\{uu^*\mid u\in \cA_s^t\}$ is the 
subset $\cC_s^t$ of the isotropy $C^*$-algebra $\cA_t^t$ described by
\begin{equation*}
\cC_s^t=\{\sum_{j=1}^n u_ju_j^*\mid u_j\in\cA_s^t\mbox{ for all }
j=1,\ldots,n\mbox{ and }n\in \NN\}.
\end{equation*}
If $b\in \cA_s^t$ is an arbitrary element then, for any $v\in \cC_s^t$, letting 
$v=\sum_{j=1}^n u_ju_j^*$, where 
$u_j\in \cA_s^t$, $j=1,\ldots,n$ and $n\in\NN$, since $b^*u_j\in \cA_s^s$, it follows
that the element 
\begin{equation*}
b^*vb=b^*\bigl(\sum_{j=1}^nu_ju_j^*\bigr)b=\sum_{j=1}^n b^*u_ju_j^*b
=\sum_{j=1}^n (b^*u_j)(b^*u_j)^*
\end{equation*}
is positive in the isotropy $C^*$-algebra $\cA_s^s$. By assumption, the unit $\epsilon_t$
can be approximated by a sequence $(v_n)_n$ with $v_n\in \cC_s^t$ and then
\begin{equation*} 
b^*b=b^*\epsilon_t b=\lim_{n\ra\infty} b^*v_nb,
\end{equation*}
is a positive element in the isotropy $C^*$-algebra $\cA_s^s$, since the cone of 
its positive elements is closed.
\end{proof}

\end{document}